\newtheorem{prop}{Proposition}[section]
\newtheorem{lem}[prop]{Lemma}
\newtheorem{cor}[prop]{Corollary}
\newtheorem{thm}[prop]{Theorem}
\theoremstyle{definition}
\newtheorem{rem}[prop]{Remark}
\newtheorem{defi}[prop]{Definition}
\newtheorem{ex}[prop]{Example}
\newtheorem*{KN}{Crazy Knight's Tour Problem}
\def\S{\mathbb{S}}
\def\Z{\mathbb{Z}}
\newcommand{\probname}{Crazy Knight's Tour Problem}
\def\G{\Gamma}
\def\H{\mathrm{H}}
\def\lcm{\mathrm{lcm}}
\def\Cay{\mathsf{Cay}}
\def\S{\mathcal{S}}
\def\E{\mathcal{E}}
\def\D{\mathcal{D}}
\def\B{\mathcal{B}}
\def\R{\mathcal{R}}
\def\C{\mathcal{C}}
\begin{document}

\title{Relative Heffter arrays and biembeddings}

\author{S. Costa}
\address{DICATAM - Sez. Matematica, Universit\`a degli Studi di Brescia, Via
Branze 43, I-25123 Brescia, Italy}
\email{simone.costa@unibs.it}

\author{A. Pasotti}
\address{DICATAM - Sez. Matematica, Universit\`a degli Studi di Brescia, Via
Branze 43, I-25123 Brescia, Italy}
\email[Corresponding author]{anita.pasotti@unibs.it}

\author{M.A. Pellegrini}
\address{Dipartimento di Matematica e Fisica, Universit\`a Cattolica del Sacro Cuore, Via Musei 41,
I-25121 Brescia, Italy}
\email{marcoantonio.pellegrini@unicatt.it}

\begin{abstract}
Relative Heffter arrays, denoted by $\H_t(m,n; s,k)$, have been introduced as a generalization of the classical concept of Heffter array.
A $\H_t(m,n; s,k)$ is an $m\times n$ partially filled  array  with elements in $\Z_v$, where
$v=2nk+t$, whose rows contain $s$ filled cells and whose columns contain $k$ filled cells,
such that the elements in every row and column sum to zero and, for every $x\in \Z_v$ not belonging to the subgroup of
order $t$, either $x$ or $-x$ appears in the array.
In this paper we show how relative Heffter arrays can be used to construct biembeddings of
cyclic cycle decompositions of the complete multipartite graph
$K_{\frac{2nk+t}{t}\times t}$ into an orientable surface.
In particular, we construct such biembeddings providing integer globally simple  square relative Heffter arrays for
$t=k=3,5,7,9$ and $n\equiv 3 \pmod 4$ and for $k=3$ with $t=n,2n$, any odd $n$.
\end{abstract}

\keywords{Heffter array, biembedding, multipartite complete graph}
\subjclass[2010]{05B20; 05B30; 05C10}

\maketitle

\section{Introduction}
An $m \times n$  partially filled (p.f., for short) array on a set $\Omega$ is an $m \times n$ matrix
whose elements belong to $\Omega$ and where we also allow some cells to be empty.
The following class of p.f. arrays was introduced in \cite{CMPPRelative}, generalizing the ideas
of \cite{A}:

\begin{defi}\label{def:RelativeH}
Let $v=2nk+t$ be a positive integer and let $J$ be the subgroup of $\Z_v$ of order $t$.
 A $\H_t(m,n; s,k)$ \emph{Heffter array  over $\Z_v$ relative to $J$} is an $m\times n$ p.f.  array
 with elements in $\Z_v$ such that:
\begin{itemize}
\item[($\rm{a})$] each row contains $s$ filled cells and each column contains $k$ filled cells;
\item[($\rm{b})$] for every $x\in \Z_{2nk+t}\setminus J$, either $x$ or $-x$ appears in the array;
\item[($\rm{c})$] the elements in every row and column sum to zero.
\end{itemize}
\end{defi}

 Trivial necessary conditions for the existence of a $\H_t(m,n; s,k)$ are
that $t$ divides $2nk$, $nk=ms$, $3\leq s \leq n$ and $3\leq k \leq m$.
If $\H_t(m,n; s,k)$ is a square array, it will be denoted by $\H_t(n;k)$.
A relative Heffter array is called \emph{integer} if Condition ($\rm{c}$) in Definition \ref{def:RelativeH} is
strengthened so that the elements in every row and in every
column, viewed as integers in
$\pm\left\{ 1, \ldots, \left\lfloor \frac{2nk+t}{2}\right\rfloor \right\} ,$
sum to zero in $\Z$.
We remark that, if $t=1$, namely if $J$ is the trivial subgroup of $\Z_{2nk+1}$, we find again the classical concept of a (integer) Heffter
array, see \cite{A,ABD, ADDY, BCDY, CDDY,CMPPHeffter,DM,DW}.
In particular, in \cite{CDDY} it was proved that Heffter arrays $\H_1(n;k)$ exist
for all $n\geq k\geq 3$,
while by \cite{ADDY, DW} integer Heffter arrays $\H_1(n;k)$ exist if and only if
the additional condition $nk\equiv 0,3\pmod 4$ holds.
At the moment, the only known results concerning relative Heffter arrays are described in \cite{CMPPRelative, MP}.
Some necessary conditions for the existence of an integer $\H_t(n;k)$ are given by the following.

\begin{prop}\label{prop:necc}\cite{CMPPRelative}
Suppose that there exists an integer $\H_t(n;k)$ for some $n\geq k\geq 3$ and some divisor $t$ of $2nk$.
\begin{itemize}
\item[(1)] If $t$ divides $nk$, then
$nk\equiv 0 \pmod 4$  or  $nk\equiv -t \equiv \pm 1\pmod 4.$
\item[(2)] If $t=2nk$, then $k$ must be even.
\item[(3)] If $t\neq 2nk$ does not divide $nk$, then
$t+2nk\equiv 0 \pmod 8.$
\end{itemize}
\end{prop}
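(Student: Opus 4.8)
The plan is to argue by counting and parity. Write $v=2nk+t$ and let $d=v/t$ be the index, so that the subgroup $J$ of order $t$ consists exactly of the multiples of $d$ in $\Z_v$. Since the array is square with $k$ filled cells in each of its $n$ rows, it has $nk$ filled cells, and by condition (b) these cells carry exactly one representative of each pair $\{x,-x\}$ with $x\in\Z_v\setminus J$. Viewing the entries as integers in $\pm\{1,\dots,\lfloor v/2\rfloor\}$, the set of absolute values occurring is therefore
\[
A=\{1,2,\dots,\lfloor v/2\rfloor\}\setminus\{a : d\mid a\},
\]
that is, all integers in $[1,\lfloor v/2\rfloor]$ that are not multiples of $d$ (the positive representatives of $J$). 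This description of $A$ is the basic object of the whole argument.

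The key reduction is the following: because the integer condition (c) forces every row to sum to zero in $\Z$, the sum of all entries of the array is zero; reducing modulo $2$ and using $a\equiv-a\equiv|a|\pmod 2$, this gives that $S:=\sum_{a\in A}a$ must be even, equivalently that the number of odd elements of $A$ is even. Each of the three statements will then come from evaluating $S$ (or this parity) explicitly. For this I would split according to the parity of $d$, which is governed by the divisibility hypotheses: if $t\mid nk$ then $2nk/t$ is even and $d=2nk/t+1$ is odd (case (1)), whereas if $t\nmid nk$ then $2nk/t$ is odd, so $d$ is even; note also that in this last situation $t$ must be even, since $t\mid 2nk$ together with $t$ odd would force $t\mid nk$.

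For cases (1) and (3) I would compute $S$ as the difference between the progression $1+\cdots+\lfloor v/2\rfloor$ and the progression of positive multiples of $d$ in that range, substitute $v=td$ and $d-1=2nk/t$, and read off the parity. In case (1) this produces a quadratic expression in $nk/t$ whose evenness is equivalent to $nk/t\equiv 0,3\pmod 4$; translating through $nk=(nk/t)\,t$, and treating $t$ even and $t$ odd separately (the $-t\equiv\pm1$ alternative being vacuous when $t$ is even), this gives precisely $nk\equiv 0$ or $nk\equiv-t\pmod 4$. In case (3), since $d$ is even every multiple of $d$ is even, so the odd elements of $A$ are exactly the odd integers in $[1,v/2]$; as $t$ is even we have $v\equiv 0\pmod 4$, hence $v/2$ is even and $S\equiv v/4\pmod 2$, so ``$S$ even'' becomes $v=2nk+t\equiv 0\pmod 8$.

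Case (2) is the one that needs a finer argument and is the main obstacle. When $t=2nk$ we have $d=2$, so $J$ is the subgroup of even residues and \emph{every} entry of the array is odd; here the global condition ``$S$ even'' only yields that $nk$ is even, which is weaker than what is claimed. Instead I would argue one row at a time: a row contains $k$ odd integers summing to zero, and a sum of $k$ odd integers is congruent to $k$ modulo $2$, so $k$ must be even. The heart of the proof is thus the correct bookkeeping of parities in the three regimes — in particular recognising that $t$ is forced to be even in case (3), and that case (2) requires the per-row parity count rather than the global one.
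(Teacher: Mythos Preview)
The paper does not prove this proposition: it is quoted verbatim from \cite{CMPPRelative} as a known necessary condition, with no argument supplied here. So there is no ``paper's own proof'' to compare against.

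That said, your argument is correct and is essentially the standard one. The key observations are exactly those you isolate: the support is $A=[1,\lfloor v/2\rfloor]\setminus d\Z$ with $d=v/t=2nk/t+1$; the integer row sums force $\sum_{a\in A}a\equiv 0\pmod 2$; and the parity of $d$ is controlled by whether $t\mid nk$. Your case split is sound. In case~(1) ($d$ odd) the count of odd elements of $A$ reduces, for $t$ odd, to $q(q+1)\equiv 0\pmod 4$ with $q=nk/t$, i.e.\ $q\equiv 0,3\pmod 4$, which translates to $nk\equiv 0$ or $nk\equiv -t\pmod 4$; for $t$ even one gets $nk/2$ even, i.e.\ $nk\equiv 0\pmod 4$, and the second alternative is indeed vacuous. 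In case~(3) your remark that $t$ must be even (since $t\mid 2nk$ but $t\nmid nk$) is the crucial point; then $d$ and $t$ both even give $4\mid v$, and the odd elements of $A$ are exactly the $v/4$ odd integers in $[1,v/2]$, yielding $8\mid v$. In case~(2) you correctly observe that the global parity only gives $nk$ even and that one must instead use a single row: $k$ odd integers summing to $0$ forces $k$ even.

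One minor suggestion: in the write-up you might make explicit, at the outset, that no element of $\Z_v\setminus J$ is its own negative (since $v$ even forces $t$ even, whence $v/2\in J$), so the description of the support as a set of size $nk$ is unambiguous.
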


We point out that these conditions are not sufficient, in fact in the same paper the authors show that there is no integer
$\H_{3n}(n;3)$ and no integer $\H_8(4;3)$.

The \emph{support} of an integer Heffter array $A$, denoted by $supp(A)$, is defined to be the set of the absolute
values of the elements
contained in $A$.
It is immediate to see that an integer $\H_2(n;k)$ is nothing but an integer $\H_1(n;k)$, since in both cases the support
is $\{1,2,\ldots,nk\}$.

In this paper we study the connection between  relative Heffter arrays and biembeddings.
In particular, in Section \ref{sec:orderings} we recall well known definitions and results about
simple orderings and cycle decompositions. Then, in Section \ref{sec:biembedding} we explain how relative
Heffter arrays $\H_t(n;k)$ can be used to construct biembeddings of cyclic $k$-cycle decompositions of the complete multipartite graph
$K_{\frac{2nk+t}{t}\times t}$ into an orientable surface.
Direct constructions of globally simple integer  $\H_t(n;3)$ with $t=n,2n$ for any odd $n$ and of
globally simple integer $\H_k(n;k)$ for $k=7,9$ and $n\equiv 3 \pmod 4$ are described in Section
\ref{sec:constructions}.
Combining the results of these sections we prove the following.
\begin{thm}\label{main}
There exists a cellular biembedding of a pair of cyclic $k$-cycle decompositions of  $K_{\frac{2nk+t}{t}\times t}$
into an orientable surface in each of the following cases:
\begin{itemize}
 \item[(1)] $k=3$, $t\in \{n,2n\}$ and $n$ is odd;
  \item[(2)] $k\in \{3,5,7,9\}$, $t=k$ and $n\equiv 3 \pmod 4$.
\end{itemize}
\end{thm}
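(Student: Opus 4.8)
The plan is to decouple the statement into two independent pieces: a general passage from relative Heffter arrays to biembeddings, and a supply of explicit arrays in the required parameter ranges. Concretely, I would first invoke the biembedding criterion established in Section~\ref{sec:biembedding}, which asserts that a globally simple integer relative Heffter array $\H_t(n;k)$, read with the natural left-to-right ordering on its rows and top-to-bottom ordering on its columns, yields a cellular biembedding of a pair of cyclic $k$-cycle decompositions of $K_{\frac{2nk+t}{t}\times t}$ into an orientable surface. Global simplicity guarantees that these natural orderings are \emph{simple}, i.e.\ the partial sums along each row and each column are pairwise distinct, so that the associated closed walks are genuine $k$-cycles; the integer hypothesis and the specific pairing of the row- and column-rotations provide the consistent orientation needed for orientability. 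With this criterion in hand, Theorem~\ref{main} reduces entirely to producing, in each listed case, one globally simple integer square relative Heffter array with the stated parameters.

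For case~(2), where $t=k$ and $n\equiv 3\pmod 4$, the required arrays are assembled by $k$: for $k=3$ and $k=5$ I would take the globally simple integer $\H_k(n;k)$ already available from the earlier work \cite{CMPPRelative, MP} on relative Heffter arrays, and for $k=7$ and $k=9$ I would use the direct constructions carried out in Section~\ref{sec:constructions}. For case~(1), where $k=3$ and $t\in\{n,2n\}$ with $n$ odd, I would invoke the direct constructions of globally simple integer $\H_t(n;3)$ given, again, in Section~\ref{sec:constructions}. In every instance the argument is the same: check that the proposed filling satisfies Definition~\ref{def:RelativeH} (the right number of filled cells per row and column, a complete set of representatives of $\Z_v\setminus J$ up to sign, and zero row and column sums as integers), then check global simplicity, and finally feed the array into the Section~\ref{sec:biembedding} criterion.

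The genuine obstacle is the construction and verification of these infinite families, not the final assembly. Exhibiting explicit entries that simultaneously realise the integer relative-Heffter conditions and remain globally simple is delicate: the zero-sum and support conditions force the entries into rigid residue patterns modulo $v=2nk+t$, while global simplicity is a statement about \emph{all} partial sums along every line being distinct, which must be verified uniformly in $n$. I expect the heart of the proof to lie in choosing the placement so that, within each row and column, the successive partial sums run through an arithmetic-type progression whose terms are visibly distinct modulo $v$, and in handling the few boundary diagonals separately. The relative parameters $t=n,2n$ and the larger values $k=7,9$ are precisely where the residue bookkeeping becomes most involved, so the main effort is concentrated in the case analyses of Section~\ref{sec:constructions}; once those arrays are in place, Theorem~\ref{main} follows immediately by the criterion above.
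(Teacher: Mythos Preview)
Your plan has a real gap: the biembedding criterion in Section~\ref{sec:biembedding} does \emph{not} say that a globally simple relative Heffter array automatically yields a biembedding. Theorem~\ref{thm:biembedding} requires the row and column orderings $\omega_r,\omega_c$ to be \emph{compatible}, i.e.\ $\omega_c\circ\omega_r$ must be a single cycle of length $|\E(A)|$; equivalently (Corollary~\ref{preprecedente}) the Crazy Knight's Tour Problem $P(A)$ must admit a solution. Global simplicity only guarantees that the face boundaries are honest cycles; it says nothing about the rotation $\bar\rho_0$ being a single cycle, and without that you do not get a well-defined cellular embedding from the construction. Your sentence about ``the integer hypothesis and the specific pairing of the row- and column-rotations'' providing orientability does not address this at all.

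In the paper's proof this compatibility step is handled case by case and is nontrivial: for the cyclically $k$-diagonal arrays with $k=3,5,7$ it is imported from \cite{CMPPHeffter}; for $k=9$ the array is $9$-diagonal with width $\frac{n-9}{2}$ and compatibility comes from \cite[Proposition~4.19]{CDP} when $\gcd(n,7)=1$, while the residual case $7\mid n$ requires the explicit tour in Lemma~\ref{percorso}. None of this is automatic, and the $k=9$ case in particular needs its own argument. So even granting all the constructions in Section~\ref{sec:constructions}, your reduction ``feed the array into the Section~\ref{sec:biembedding} criterion'' is incomplete: you must also solve $P(A)$ (or otherwise produce compatible orderings) for each family, and you should say how.
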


 Finally, in Section  \ref{sec:Arch} we introduce a further generalization, called \emph{Archdeacon array}, of the classical concept
of Heffter array. We show some examples and how both cycle decompositions and biembeddings can be obtained also using these arrays.

\section{Simple orderings and cycle decompositions}\label{sec:orderings}

Given two integers $a\leq b$, we denote by $[a,b]$ the interval containing the integers
$\{a,a+1,\ldots,b\}$.
If $a>b$, then $[a,b]$ is empty.

If $A$ is an  $m\times n$ p.f.  array,
the rows and the columns of $A$ will be denoted by $\overline{R}_1,\ldots,\overline{R}_m$ and by
$\overline{C}_1,\ldots,\overline{C}_n$, respectively.
We will denote by $\E(A)$ the unordered list of the elements of the filled cells of $A$.
Analogously, by $\E(\overline{R}_i)$ and $\E(\overline{C}_j)$ we mean the unordered lists of elements  of the $i$-th 
row and of the $j$-th column, respectively, of $A$.
Also, we define the skeleton of $A$, denoted by $skel(A)$, to be the set of the filled positions of $A$.

Given  a finite subset $T$ of an abelian group $G$ and an ordering $\omega=(t_1,t_2,\ldots,t_k)$ of the elements
in $T$,  let $s_i=\sum_{j=1}^i t_j$, for any $i\in[1,k]$,
be the $i$-th partial sum of $\omega$ and  set $\S(\omega)=(s_1,\ldots,s_k)$.
The ordering $\omega$ is said to be \emph{simple} if $s_b\neq s_c$ for all $1\leq b <  c\leq k$ or,
equivalently,
if there is no proper subsequence of $\omega$ that sums to $0$.
Note that if $\omega$ is a simple ordering so is $\omega^{-1}=(t_k,t_{k-1},\ldots,t_1)$.
We point out that there are several interesting problems and conjectures about distinct partial sums:
see, for instance, \cite{AL, ADMS, CMPPSums, HOS, O}.
Given an $m \times n$ p.f. array $A$, by $\omega_{\overline{R}_i}$ and $\omega_{\overline{C}_j}$ we will
denote, respectively,  an ordering of $\E(\overline{R}_i)$ and of $\E(\overline{C}_j)$.  If for any $i\in[1, m]$
and for any $j\in[1,n]$, the orderings $\omega_{\overline{R}_i}$ and $\omega_{\overline{C}_j}$ are simple, we define
by
  $\omega_r=\omega_{\overline{R}_1}\circ \ldots \circ\omega_{\overline{R}_m}$ the simple ordering for the rows and
  by $\omega_c=\omega_{\overline{C}_1}\circ \ldots \circ\omega_{\overline{C}_n}$ the simple ordering for the columns.
Moreover, by \emph{natural ordering} of a row (column) of $A$ we mean the ordering from left to right (from top to
bottom).
A p.f. array $A$ on an abelian group $G$ is said to be
	
\begin{itemize}
\item \emph{simple} if each row and each column of $A$ admits a simple ordering;
\item \emph{globally simple} if the natural ordering of each row and each column of $A$ is simple.
\end{itemize}

Clearly if $k\leq 5$, then every square relative Heffter array is (globally) simple.

We recall some basic definitions about graphs and graph decompositions.
Given a graph $\G$, by $V(\G)$ and $E(\G)$ we mean the vertex set and the edge set of $\G$, respectively.
We will denote by $K_v$ the complete graph of order $v$
and by $K_{q\times r}$ the complete multipartite graph with $q$ parts each of size $r$.
Obviously  $K_{q\times 1}$ is nothing but the complete graph $K_q$.
Let $G$ be an additive group (not necessarily abelian) and let $\Lambda \subseteq G\setminus \{0\}$
such that $\Lambda=-\Lambda$, which means that for every $\lambda\in \Lambda$ we have also $-\lambda \in \Lambda$.
The Cayley graph on $G$ with connection set $\Lambda$, denoted by $\Cay[G:\Lambda]$, is the simple graph having $G$ as
vertex set
and such that two vertices $x$ and $y$ are adjacent if and only if $x-y\in \Lambda$.
Note that, if $\Lambda=G\setminus \{0\}$, the Cayley graph is the complete graph whose vertex set is $G$ and,
if $\Lambda=G\setminus J$ for some subgroup $J$ of $G$, the Cayley graph is the complete multipartite graph $K_{q\times r}$ where $q=|G:J|$ and $r=|J|$.

The following are well known definitions and results which can be found, for instance, in \cite{BP2007}.
Let $\G$ be a subgraph of a graph $K$.
A $\Gamma$-\emph{decomposition} of $K$ is a set $\D$
of subgraphs of $K$ isomorphic to $\G$ whose edges partition $E(K)$.
If the vertices of $K$ belong to a group $G$, given $g\in G$, by $\G+g$ one means the graph whose vertex set is
$V(\G)+g$ and whose edge set is $\{\{x+g,y+g\}\mid \{x,y\}\in E(\G)\}$.
An \emph{automorphism group} of a $\G$-decomposition $\D$ of $K$ is a group of bijections on $V(K)$
leaving $\D$ invariant.
 A $\G$-decomposition of $K$ is said to be \emph{regular under a group} $G$ or $G$-\emph{regular}
 if it admits $G$ as an automorphism group acting sharply transitively on $V(K)$.
 Here we consider cyclic cycle decompositions, namely decompositions which are regular
 under a cyclic group and with $\G$ a cycle.
Finally,
  two graph decompositions $\D$ and $\D'$ of a simple graph $K$ are said \emph{orthogonal} if and only if for any $B$
of $\D$
  and any $B'$ of $\D'$, $B$ intersects $B'$ in at most one edge.

 The relationship between simple relative Heffter arrays and cyclic cycle decompositions of the complete multipartite graph is explained in
\cite{CMPPRelative}. Here we briefly recall the following result.

 \begin{prop}\label{HeffterToDecompositions}\cite[Proposition 2.9]{CMPPRelative}
  Let $A$ be a $\H_t(m,n;s,k)$ simple with respect to the orderings $\omega_r$ and $\omega_c$. Then:
\begin{itemize}
\item[(1)] there exists a cyclic $s$-cycle decomposition $\D_{\omega_r}$ of $K_{\frac{2ms+t}{t}\times t}$;
\item[(2)] there exists a cyclic $k$-cycle decomposition $\D_{\omega_c}$ of $K_{\frac{2nk+t}{t}\times t}$;
\item[(3)] the cycle decompositions $\D_{\omega_r}$ and $\D_{\omega_c}$ are orthogonal.
\end{itemize}
\end{prop}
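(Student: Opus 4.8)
The plan is to present $K_{\frac{2nk+t}{t}\times t}$ as a Cayley graph over $\Z_v$ and to read the columns (respectively rows) of $A$ as base blocks of a translation-regular, hence cyclic, cycle decomposition. Write $v=2nk+t=2ms+t$ and put $\Lambda=\Z_v\setminus J$; by the discussion preceding the statement $\Cay[\Z_v:\Lambda]=K_{\frac{2nk+t}{t}\times t}$, with vertex set $\Z_v$ and with the cosets of $J$ as parts. To a filled cell with entry $x$ we attach the two oriented differences $x$ and $-x$. Since $A$ has $nk=\tfrac12|\Lambda|$ filled cells and, by $(\rm b)$, exactly one of $x,-x$ occurs for each $x\in\Lambda$, distinct cells carry entries of distinct absolute value and the oriented differences of $A$ meet every element of $\Lambda$ exactly once.

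First I would prove $(2)$. Fix a column $\overline{C}_j$ with simple ordering $\omega_{\overline{C}_j}=(a_1,\dots,a_k)$ and partial sums $s_0=0$, $s_i=\sum_{l=1}^{i}a_l$. By $(\rm c)$ we have $s_k=0$, and simplicity says that $s_1,\dots,s_k$ are pairwise distinct; hence $s_0,s_1,\dots,s_{k-1}$ are $k$ distinct elements of $\Z_v$ and $C_j:=(s_0,s_1,\dots,s_{k-1})$ is a genuine $k$-cycle whose $i$-th edge $\{s_{i-1},s_i\}$ has difference $a_i\in\Lambda$, so $C_j\subseteq\Cay[\Z_v:\Lambda]$. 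Set $\D_{\omega_c}=\{C_j+g : j\in[1,n],\ g\in\Z_v\}$. The oriented differences of the edges of $C_1,\dots,C_n$ are exactly the oriented differences of $A$, so they exhaust $\Lambda$ with multiplicity one; by the standard difference (orbit) method this forces the translates $C_j+g$ to partition $E(\Cay[\Z_v:\Lambda])$, every orbit having full length $v$. Thus $\D_{\omega_c}$ is a $k$-cycle decomposition of $K_{\frac{2nk+t}{t}\times t}$ admitting $\Z_v$ as an automorphism group acting sharply transitively on vertices, i.e. a cyclic one. The very same argument applied to the rows, each of which sums to zero and carries a simple ordering of length $s$, associates to $\overline{R}_i$ an $s$-cycle $R_i$ and yields the cyclic $s$-cycle decomposition $\D_{\omega_r}=\{R_i+g\}$ of $K_{\frac{2ms+t}{t}\times t}$; this proves $(1)$. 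Since $ms=nk$ we have $2ms+t=2nk+t$, so both decompositions live on the same graph $K$, as orthogonality requires.

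It remains to prove $(3)$. Let $B=R_i+g$ be a block of $\D_{\omega_r}$ and $B'=C_j+h$ a block of $\D_{\omega_c}$, and suppose for contradiction that they share two distinct edges $e,e'$. Within a single developed block the edge differences are translates of the entries of one line of $A$ and are therefore pairwise distinct in absolute value; in particular $e$ and $e'$ have distinct lengths $\ell\neq\ell'$ in $\Lambda/\{\pm1\}$. As $e,e'\subseteq B$ these lengths are, up to sign, entries of row $i$, and as $e,e'\subseteq B'$ they are also entries of column $j$. But each element of $\Lambda/\{\pm1\}$ sits in a unique cell of $A$, and row $i$ meets column $j$ only in the cell $(i,j)$; hence row $i$ and column $j$ have at most one common entry up to sign, forcing $\ell=\ell'$, a contradiction. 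Therefore $B\cap B'$ contains at most one edge, which is precisely orthogonality.

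The routine ingredients are the distinctness of the partial sums (immediate from the definition of simple) and the edge count $\binom{v/t}{2}t^2=v\cdot nk$, which equals the number $v\cdot nk$ of edge-translates of the $n$ base $k$-cycles. The single delicate point is the difference-method step: one must verify that the development is free, so that each block orbit has full length $v$, and that it reconstructs exactly $K_{\frac{2nk+t}{t}\times t}$ rather than some other graph on $\Z_v$. This is exactly where $(\rm b)$ is used—it removes the subgroup $J$ and pairs each surviving length with its negative—and checking it carefully is the main obstacle to a complete write-up.
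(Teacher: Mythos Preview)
Your proposal is correct and follows the same approach the paper uses. Note that the paper does not actually prove this proposition here---it is quoted from \cite{CMPPRelative}---but the very same argument appears later as the proof of Proposition~\ref{ArchdeaconToDecompositions}, the Archdeacon-array generalisation: build the base cycles as sequences of partial sums, observe that their list of differences is exactly $\pm\E(A)$, and invoke Theorem~\ref{thm:basecycles2} (Buratti's criterion) to conclude; orthogonality then ``follows from the requirement that the elements of $\pm\E(A)$ are pairwise distinct.'' Your write-up unpacks the difference-method step and the orthogonality step more explicitly than the paper does, but the underlying idea is identical.
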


The arrays we are going to construct are square  with a diagonal structure, so it is convenient
to introduce the following notation.
If $A$ is an $n\times n$ array, for $i\in[1,n]$ we define the $i$-th diagonal
$$D_i=\{(i,1),(i+1,2),\ldots,(i-1,n)\}.$$
Here all the arithmetic on the row and the column indices is performed modulo $n$, where the set of reduced residues is
$\{1,2,\ldots,n\}$.
We say that the diagonals $D_i,D_{i+1},\ldots, D_{i+r}$ are \emph{consecutive diagonals}.

\begin{defi}
  Let $k\geq 1$ be an integer. We will say that a square p.f. array $A$ of size $n\geq k$ is
\begin{itemize}
 \item \emph{$k$-diagonal}   if the non empty cells of $A$ are exactly those of $k$ diagonals;
 \item \emph{cyclically $k$-diagonal}
  if the nonempty cells of $A$ are exactly those of $k$ consecutive diagonals.
\end{itemize}
\end{defi}
Let $A$ be a $k$-diagonal array of size $n> k$. A set $S=\{D_{r+1},D_{r+2},\ldots, D_{r+\ell}\}$ is said to be
an \emph{empty strip of width $\ell$} if  $D_{r+1},D_{r+2},\ldots,D_{r+\ell}$ are empty diagonals, while $D_r$ and $D_{r+\ell+1}$ are filled diagonals.
\begin{defi}
Let $A$ be a $k$-diagonal array of size $n > k$.
We will say that $A$ is a \emph{$k$-diagonal array with width $\ell$}
if all the empty strips of $A$ have width $\ell$.
\end{defi}
An array of this kind will be given in Example \ref{4.9}.

\section{Relation with biembeddings}\label{sec:biembedding}

In \cite{A}, Archdeacon introduced Heffter arrays also in view of their applications and, in particular, since they are useful for finding biembeddings of cycle decompositions, as shown, for instance, in
\cite{CDDYbiem, CMPPHeffter, DM}.
In this section, generalizing some of Archdeacon's results we show how starting from a relative
Heffter array it is possible to
obtain suitable biembeddings.

We recall the following definition, see \cite{Moh}.

\begin{defi}
An \emph{embedding} of a graph $\G$ in a surface $\Sigma$ is a continuous injective
mapping $\psi: \G \to \Sigma$, where $\G$ is viewed with the usual topology as $1$-dimensional simplicial complex.
\end{defi}

The  connected components of $\Sigma \setminus \psi(\G)$ are called $\psi$-\emph{faces}.
If each $\psi$-face is homeomorphic to an open disc, then the embedding $\psi$ is said to be \emph{cellular}.

\begin{defi}
 A \emph{biembedding} of  two cycle decompositions $\D$ and $\D'$ of a simple graph $\G$  is a face $2$-colorable embedding
  of $\G$ in which one color class is comprised of the cycles in $\D$ and
  the other class contains the cycles in $\D'$.
\end{defi}

Following the notation given in \cite{A}, for every edge $e$ of a  graph $\G$, let $e^+$ and $e^-$ denote
its two possible directions
and let $\tau$ be the involution swapping $e^+$ and $e^-$ for every $e$.
Let $D(\G)$ be the set of all directed edges of $\G$ and, for any $v\in V(\G)$, call $D_v$ the set of edges directed out
of $v$.
A local rotation $\rho_v$ is a cyclic permutation of $D_v$. If we select a local rotation for each vertex of $\G$, then
all together
they form a rotation of $D(\G)$. We recall the following result, see \cite{A, GT, MT}.

\begin{thm}\label{thm:embedding}
 A rotation $\rho$ on $\G$ is equivalent to a cellular embedding of $\G$ in an orientable surface.
 The face boundaries of the embedding corresponding to $\rho$ are the orbits of $\rho \circ \tau$.
\end{thm}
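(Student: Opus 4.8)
The statement is the Heffter--Edmonds--Ringel rotation principle, and the plan is to exhibit the two directions of the correspondence explicitly and then read off the face-tracing permutation as $\rho\circ\tau$. First, from a rotation to an embedding: given $\G$ together with a rotation $\rho$, I would build a $2$-dimensional cell complex $\Sigma$ having $\G$ as its $1$-skeleton. Set $\sigma=\rho\circ\tau$; since $\tau$ is a fixed-point-free involution and each $\rho_v$ is a bijection of $D_v$, $\sigma$ is a permutation of the finite set $D(\G)$ and so decomposes into disjoint cycles $O_1,\ldots,O_f$. For each cycle $O_i=(d_1,d_2,\ldots,d_\ell)$ I would glue a closed disc (an $\ell$-gon) to $\G$ by attaching its boundary along the closed walk $d_1 d_2 \cdots d_\ell$ obtained by traversing each $d_j$ in its own direction. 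Since every $\psi$-face is a disc by construction, the resulting embedding of $\G$ into $\Sigma$ is automatically cellular.

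Next I would verify that $\Sigma$ is a closed orientable surface. The key local computation is the one already indicated before the statement: if $d\in D_v$ points from $v$ to $w$, then $\tau(d)\in D_w$ points back, and $\rho_w(\tau(d))=\sigma(d)$ is the next edge out of $w$ in the cyclic order at $w$; hence each application of $\sigma$ turns a corner at a vertex, and the cycles of $\sigma$ trace exactly the face boundary walks. Every undirected edge $e$ contributes precisely its two directions $e^+,e^-$ to $D(\G)$, each lying in exactly one cycle of $\sigma$, so $e$ is glued to exactly two polygon sides and the link of an interior edge point is a circle. Around a vertex $v$, the corners between $\rho_v$-consecutive edges fit together cyclically into a single disc neighbourhood, so the link of $v$ is a circle as well; thus $\Sigma$ has no boundary. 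Orientability follows by orienting every attached polygon coherently with the cyclic order induced by $\rho$: two polygons sharing an edge $e$ meet it along $e^+$ in one and along $e^-=\tau(e^+)$ in the other, so their induced boundary orientations are opposite on $e$, which is exactly the compatibility condition for a global orientation of $\Sigma$.

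For the reverse direction I would start from a cellular embedding $\psi\colon\G\to\Sigma$ with $\Sigma$ orientable, fix an orientation of $\Sigma$, and let that orientation induce at each vertex $v$ the counterclockwise cyclic ordering of the edges in $D_v$, i.e.\ a local rotation $\rho_v$, and hence a rotation $\rho$. Tracing any face of $\psi$ in the direction compatible with the orientation reproduces precisely the corner-turning rule above, so the face boundaries are the cycles of $\rho\circ\tau$; this simultaneously proves the second assertion and shows that the two constructions are mutually inverse up to orientation-preserving homeomorphism of $\Sigma$ fixing $\G$, which is the asserted equivalence.

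The main obstacle is the careful check in the forward direction that the glued complex is genuinely a surface and not merely a pseudosurface: one must confirm that the link of \emph{every vertex} (not just every edge) is a single circle, and this is exactly the point at which the cyclic---rather than arbitrary---nature of each $\rho_v$ is used. Everything else is bookkeeping about $\sigma=\rho\circ\tau$ and its cycle structure; the orientation-compatibility argument and the Euler-characteristic computation (should one wish to identify the genus) are then routine.
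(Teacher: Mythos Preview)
Your sketch is a correct outline of the standard Heffter--Edmonds--Ringel argument, but there is nothing in the paper to compare it against: the paper does not prove this theorem. It is stated as a background result with citations to \cite{A, GT, MT} (Archdeacon's paper and the Gross--Tucker and Mohar--Thomassen textbooks) and is then used as a black box inside the proof of Theorem~\ref{thm:biembedding}. So your proposal supplies more than the paper does; the paper's ``proof'' is simply a reference to the literature.
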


Given a  relative Heffter array $A=\H_t(m,n;s,k)$, the orderings $\omega_r$ and $\omega_c$ are said to be
\emph{compatible} if
$\omega_c \circ \omega_r$ is a cycle of length $|\E(A)|$.

\begin{thm}\label{thm:biembedding}
Let $A$ be a  relative Heffter array $\H_t(m,n;s,k)$ that is simple with respect to the compatible orderings $\omega_r$
and $\omega_c$.
Then there exists a cellular biembedding of the cyclic cycle decompositions $\mathcal{D}_{\omega_r^{-1}}$ and
$\mathcal{D}_{\omega_c}$ of $K_{\frac{2nk+t}{t}\times t}$ into an orientable surface of genus
$$g=1+\frac{(nk-n-m-1)(2nk+t)}{2}.$$
\end{thm}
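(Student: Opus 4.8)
The plan is to realise the biembedding as a rotation system on the Cayley graph $\Cay[\Z_v:\Omega]$, where $v=2nk+t$, $J$ is the subgroup of order $t$ and $\Omega=\Z_v\setminus J$; as recalled in Section \ref{sec:orderings} this Cayley graph is exactly $K_{\frac{2nk+t}{t}\times t}$. Condition (b) of Definition \ref{def:RelativeH}, together with $|\Omega|=2nk$ and the fact that $A$ has $nk$ filled cells, forces $\Omega=\E(A)\sqcup(-\E(A))$ with the $nk$ entries pairwise distinct and none the negative of another; thus the $2nk$ directed edges out of each vertex are naturally labelled by $\Omega$, the edge out of $g$ of type $x$ being $(g\to g+x)$. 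By $\Z_v$-invariance it suffices to prescribe one local rotation $\rho_0$ at $0$ and translate it to every vertex. Proposition \ref{HeffterToDecompositions} already supplies the two cyclic cycle decompositions $\D_{\omega_c}$ and $\D_{\omega_r^{-1}}$; the task is to produce a rotation whose faces, coloured by edge type, recover precisely these two families.

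Viewing $\omega_r$ and $\omega_c$ as the permutations of $\E(A)$ sending each entry to its successor in the relevant row, resp.\ column ordering (so that compatibility reads ``$\omega_c\circ\omega_r$ is a single $nk$-cycle''), I would set
\[
\rho_0(-a)=\omega_c(a),\qquad \rho_0(a)=-\omega_r(a)\qquad(a\in\E(A)).
\]
This is a bijection of $\Omega$ interchanging $\E(A)$ and $-\E(A)$. The first key step is to verify that $\rho_0$ is a single $2nk$-cycle, so that it is a genuine local rotation in the sense of Section \ref{sec:biembedding}. Since $\rho_0$ swaps the two halves and
\[
\rho_0^2(a)=\rho_0(-\omega_r(a))=\omega_c(\omega_r(a))=(\omega_c\circ\omega_r)(a),
\]
compatibility makes $\rho_0^2$ a single $nk$-cycle on $\E(A)$, whence $\rho_0$ is a single $2nk$-cycle on $\Omega$. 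As $\Omega$ generates $\Z_v$ the graph is connected, so by Theorem \ref{thm:embedding} the resulting rotation $\rho$ yields a cellular embedding of $K_{\frac{2nk+t}{t}\times t}$ into an orientable surface.

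Next I would identify the faces, i.e.\ the orbits of $\rho\circ\tau$, noting that $\rho\tau(u\to w)=(w\to w+\rho_0(u-w))$. Starting from a directed edge of type $a\in\E(A)$ and iterating, the relation $\rho_0(-a)=\omega_c(a)$ forces the successive types to run through a column of $A$ in the ordering $\omega_c$; since that column sums to $0$ and $\omega_c$ is simple, the traced closed walk has distinct partial sums and is exactly a base cycle of $\D_{\omega_c}$. Symmetrically, an edge of type $-a$ leads, via $\rho_0(a)=-\omega_r(a)$, to a base cycle of $\D_{\omega_r^{-1}}$: the minus sign and the two applications of $\tau$ convert the $\omega_r$-partial sums into their negatives, which (using that the row sums to $0$) are precisely the $\omega_r^{-1}$-partial sums, explaining why the reversed ordering appears on the row side. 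Translating by $\Z_v$ and invoking Proposition \ref{HeffterToDecompositions}, the two families of faces are exactly the cycles of $\D_{\omega_c}$ and of $\D_{\omega_r^{-1}}$; as the first family uses only edges of type in $\E(A)$ and the second only edges of type in $-\E(A)$, the families are disjoint and furnish the required face $2$-colouring, so the embedding is a biembedding of these decompositions.

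I expect the main obstacle to be exactly this face analysis: one must check, with care about direction of traversal and signs, that every orbit of $\rho\tau$ closes up after the correct number of steps and reproduces a base cycle in the precise sense of Proposition \ref{HeffterToDecompositions} — in particular that simplicity of the orderings prevents a face from short-circuiting into a shorter closed walk, and that it is $\omega_r^{-1}$ rather than $\omega_r$ that governs the row faces. Once the faces are identified, the genus follows from Euler's formula: the embedding has $V=v$ vertices, $E=vnk$ edges (half of the $v\cdot 2nk$ directed edges), and $F=vn+vm$ faces (the $vn$ cycles of $\D_{\omega_c}$ and the $vm$ cycles of $\D_{\omega_r^{-1}}$). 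Hence $2-2g=V-E+F=v(1+n+m-nk)$, which rearranges to
\[
g=1+\frac{(nk-n-m-1)(2nk+t)}{2},
\]
as claimed.
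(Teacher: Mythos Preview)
Your proposal is correct and follows essentially the same route as the paper: the local rotation $\rho_0$ you define coincides with the paper's $\bar{\rho}_0$, the cyclicity argument via $\rho_0^{2}=\omega_c\circ\omega_r$ is identical, the face analysis splitting into column-faces (for types in $\E(A)$) and row-faces governed by $\omega_r^{-1}$ (for types in $-\E(A)$) is the same, and the genus is obtained from Euler's formula with the same vertex, edge, and face counts. The only difference is presentational: the paper writes out the face boundaries explicitly and performs the algebraic identity $-\sum_{i=0}^{j-1}\omega_r^{i}(-a)=\sum_{i=1}^{|\E(\overline{R})|-j}\omega_r^{-i}(-a)$ to justify the appearance of $\omega_r^{-1}$, whereas you sketch this step in words.
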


\begin{proof}
Since the orderings $\omega_r$ and $\omega_c$ are compatible, we have that $\omega_c\circ\omega_r$ is a cycle of length
$|\E(A)|$. Let us consider the permutation $\bar{\rho}_0$ on $\pm \E(A)=\mathbb{Z}_{2nk+t}\setminus
\frac{2nk+t}{t}\mathbb{Z}_{2nk+t}$, where $\frac{2nk+t}{t}\mathbb{Z}_{2nk+t}$ denotes the subgroup of $\mathbb{Z}_{2nk+t}$ of order $t$, defined by:
$$\bar{\rho}_0(a)=\begin{cases}
-\omega_r(a)\mbox{ if } a\in \E(A);\\
\omega_c(-a)\mbox{ if } a\in -\E(A).\\
\end{cases}$$
Note that, if $a\in \E(A)$, then $\bar{\rho}_0^2(a)=\omega_c\circ\omega_r(a)$ and hence $\bar{\rho}_0^2$ acts
cyclically on $\E(A)$. Also $\bar{\rho}_0$ exchanges $\E(A)$ with $-\E(A)$. Thus it acts cyclically on $\pm \E(A)$.

We note that the graph $K_{\frac{2nk+t}{t}\times t}$ is nothing but $\Cay[\mathbb{Z}_{2nk+t}:\mathbb{Z}_{2nk+t}\setminus
\frac{2nk+t}{t}\mathbb{Z}_{2nk+t}]$ that is $\Cay[\mathbb{Z}_{2nk+t}:\pm \E(A)]$.
Now, we define the map $\rho$ on the set of the oriented edges of the Cayley graph $\Cay[\mathbb{Z}_{2nk+t}:\pm \E(A)]$
so that:
$$\rho((x,x+a))= (x,x+\bar{\rho}_0(a)).$$
Since $\bar{\rho}_0$ acts cyclically on $\pm \E(A)$ the map $\rho$ is a rotation of $\Cay[\mathbb{Z}_{2nk+t}:\pm
\E(A)]$.
Hence, by Theorem \ref{thm:embedding}, there exists a cellular embedding $\sigma$ of  $\Cay[\mathbb{Z}_{2nk+t}:\pm \E(A)]$ in an
orientable surface
so that the face boundaries correspond to the orbits of $\rho\circ \tau$ where $\tau((x,x+a))=(x+a,x)$.
Let us consider the oriented edge $(x,x+a)$ with $a \in \E(A)$, and let  $\overline{C}$ be the column containing $a$.
Since $a\in \E(A)$, $-a\in -\E(A)$ and we have that:
$$\rho\circ\tau((x,x+a))=\rho((x+a,(x+a)-a))=(x+a,x+a+\omega_c(a)).$$
Thus $(x,x+a)$ belongs to the boundary of the face $F_1$ delimited by the oriented edges:
$$(x,x+a),(x+a,x+a+\omega_c(a)),(x+a+\omega_c(a),x+a+\omega_c(a)+\omega_c^2(a)),\dots$$
$$\dots ,\left(x+\sum_{i=0}^{|\E(\overline{C})|-2} \omega_c^i(a),x\right).$$
We note that the cycle associated to the face $F_1$ is:
$$\left(x,x+a,x+a+\omega_c(a),\ldots,x+\sum_{i=0}^{|\E(\overline{C})|-2} \omega_c^i(a)\right).$$
Let us now consider the oriented edge $(x,x+a)$ with $a\not \in \E(A)$.
Hence $-a\in \E(A)$, and we name by $\overline{R}$ the row containing the element $-a$. Since $-a\in \E(A)$ we have
that:
$$\rho\circ\tau((x,x+a))=\rho((x+a,(x+a)-a))=(x+a,x+a-\omega_r(-a)).$$
Thus $(x,x+a)$ belongs to the boundary of the face $F_2$ delimited by the oriented edges:
$$(x,x+a),(x-(-a),x-(-a)-\omega_r(-a)),$$
$$(x-(-a)-\omega_r(-a),x-(-a)-\omega_r(-a)-\omega_r^2(-a)),\dots, \left(x-\sum_{i=0}^{|\E(\overline{R})|-2}
\omega_r^i(-a),x\right).$$
Since $A$ is a Heffter array and $\omega_r$ acts cyclically on $\E(\overline{R})$, for any $j\in [1,
|\E(\overline{R})|]$ we have that:
$$-\sum_{i=0}^{j-1}\omega_{r}^{i}(-a)=
\sum_{i=j}^{|\E(\overline{R})|-1}\omega_{r}^{i}(-a)=\sum_{i=1}^{|\E(\overline{R})|-j}\omega_{r}^{|\E(\overline{R})|-i}
(-a)=\sum_{i=1}^{|\E(\overline{R})|-j}\omega_{r}^{-i}(-a).$$
It follows that the cycle associated to the face $F_2$ can be written also as:
$$\left(x,x+\sum_{i=1}^{|\E(\overline{R})|-1}\omega_{r}^{-i}(-a),x+\sum_{i=1}^{|\E(\overline{R})|-2}\omega_{r}^{-i}(-a),
\dots,x+\omega_{r}^{-1}(-a)\right).$$
Therefore any nonoriented edge $\{x,x+a\}$ belongs to the  boundaries of exactly two faces: one of type $F_1$ and one of
type $F_2$. Hence the embedding is 2-colorable.

Moreover, it is easy to see that those face boundaries are the cycles obtained from the relative Heffter array $A$ following the
orderings  $\omega_c$ and $\omega_r^{-1}$.

To calculate the genus $g$ it is sufficient to recall that $V-S+F=2-2g$, where $
V$, $S$ and $F$ denote the number of vertices,
edges and faces determined by the embedding on the surface, respectively.
We have $V=2nk+t$, $S=nk(2nk+t)$ and $F=(2nk+t)(n+m)$.
\end{proof}

Looking for compatible orderings in the case of a globally simple Heffter array led us to investigate the following problem
introduced in \cite{CDP}.
Let $A$ be an $m\times n$ \emph{toroidal} p.f. array. By $r_i$ we denote the orientation of the $i$-th row,
precisely $r_i=1$ if it is from left to right and $r_i=-1$ if it is from right to left. Analogously, for the $j$-th
column, if its orientation $c_j$ is  from  top to bottom then $c_j=1$ otherwise $c_j=-1$. Assume that an orientation
$\R=(r_1,\dots,r_m)$
 and $\C=(c_1,\dots,c_n)$ is fixed. Given an initial filled cell $(i_1,j_1)$ consider the sequence
$ L_{\R,\C}(i_1,j_1)=((i_1,j_1),(i_2,j_2),\ldots,(i_\ell,j_\ell),$ $(i_{\ell+1},j_{\ell+1}),\ldots)$
where $j_{\ell+1}$ is the column index of the filled cell $(i_\ell,j_{\ell+1})$ of the row $\overline{R}_{i_\ell}$ next
to
$(i_\ell,j_\ell)$ in the orientation $r_{i_\ell}$,
and where $i_{\ell+1}$ is the row index of the filled cell of the column $\overline{C}_{j_{\ell+1}}$ next to
$(i_\ell,j_{\ell+1})$ in the orientation $c_{j_{\ell+1}}$.
The problem is the following:

\begin{KN}
Given a toroidal p.f. array $A$,
do there exist $\R$ and $\C$ such that the list $L_{\R,\C}$ covers all the filled
cells of $A$?
\end{KN}

By $P(A)$ we will denote the \probname\ for a given array $A$.
Also, given a filled cell $(i,j)$, if $L_{\R,\C}(i,j)$ covers all the filled positions of $A$ we will
say that  $(\R,\C)$ is a solution of $P(A)$.
For known results about this problem see \cite{CDP}.
The relationship between the Crazy Knight's Tour Problem and globally simple relative Heffter arrays is explained in the following result
which is an easy consequence of Theorem
\ref{thm:biembedding}.

\begin{cor}\label{preprecedente}
  Let $A$ be a globally simple relative Heffter array $\H_t(m,n;s,k)$  such that $P(A)$ admits a solution $(\R,\C)$.
  Then there exists a biembedding of the cyclic cycle decompositions $\mathcal{D}_{\omega_r^{-1}}$ and
$\mathcal{D}_{\omega_c}$ of $K_{\frac{2nk+t}{t}\times t}$ into an orientable surface.
\end{cor}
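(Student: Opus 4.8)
The plan is to manufacture, from the datum of the solution $(\R,\C)$, a pair of simple \emph{compatible} orderings for $A$ and then to invoke Theorem \ref{thm:biembedding}. First I would use the orientations to pin down the orderings: for each row index $i$ let $\omega_{\overline{R}_i}$ be the natural ordering of $\E(\overline{R}_i)$ when $r_i=1$ and its reverse when $r_i=-1$, and analogously define $\omega_{\overline{C}_j}$ from $c_j$. Since $A$ is globally simple, each natural row and column ordering is simple, and because the reverse $\omega^{-1}$ of a simple ordering $\omega$ is again simple, every $\omega_{\overline{R}_i}$ and $\omega_{\overline{C}_j}$ is simple regardless of the chosen orientation. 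Hence $\omega_r=\omega_{\overline{R}_1}\circ\cdots\circ\omega_{\overline{R}_m}$ and $\omega_c=\omega_{\overline{C}_1}\circ\cdots\circ\omega_{\overline{C}_n}$ are simple orderings and $A$ is simple with respect to them.

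The core of the argument is to recognize the composition $\omega_c\circ\omega_r$, viewed as a permutation of $\E(A)$, as a single step of the \probname. I would regard each $\omega_{\overline{R}_i}$ as the cyclic permutation of the $s$ entries of row $i$ that sends every entry to its successor in the orientation $r_i$ (cyclically, as dictated by the toroidal structure), so that $\omega_r$ is the product of these disjoint row-cycles; $\omega_c$ is defined likewise from the column-cycles. Then, starting from the entry in a filled cell $(i,j)$, applying $\omega_r$ moves to the entry in the cell $(i,j')$ next in row $i$ under $r_i$, and subsequently applying $\omega_c$ moves to the entry in the cell $(i',j')$ next in column $j'$ under $c_{j'}$. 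This is exactly the transition $(i_\ell,j_\ell)\mapsto(i_{\ell+1},j_{\ell+1})$ defining the list $L_{\R,\C}$. Consequently the cycles of $\omega_c\circ\omega_r$ are in bijection with the orbits traced out by $L_{\R,\C}$ on $skel(A)$.

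Since $(\R,\C)$ is a solution of $P(A)$, the list $L_{\R,\C}$ covers every filled cell, so $\omega_c\circ\omega_r$ has a single orbit and is therefore a cycle of length $|skel(A)|=|\E(A)|$. Thus $\omega_r$ and $\omega_c$ are compatible in the sense required by Theorem \ref{thm:biembedding}. Applying that theorem then yields the cellular biembedding of $\mathcal{D}_{\omega_r^{-1}}$ and $\mathcal{D}_{\omega_c}$ of $K_{\frac{2nk+t}{t}\times t}$ into an orientable surface, which is the assertion.

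I expect the only delicate point to be the bookkeeping in the identification of the second paragraph: one must check that the algebraic convention for the composition $\omega_c\circ\omega_r$ (which map is applied first, and whether a cyclic permutation advances an entry forward or backward in the chosen orientation) is matched to the order in which $L_{\R,\C}$ traverses a row and then a column, so that orbits correspond to cycles without an accidental reversal. Since the proof of Theorem \ref{thm:biembedding} uses precisely $\omega_c\circ\omega_r$ (applying $\omega_r$ first) and $L_{\R,\C}$ likewise moves along the row before the column, these conventions align. Once this is confirmed, the chain of equivalences---$L_{\R,\C}$ covers all filled cells, $\omega_c\circ\omega_r$ is a single cycle, $\omega_r$ and $\omega_c$ are compatible---is immediate, and the rest is a direct appeal to Theorem \ref{thm:biembedding}.
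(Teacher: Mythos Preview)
Your proposal is correct and is precisely the argument the paper has in mind: the corollary is stated as ``an easy consequence of Theorem~\ref{thm:biembedding}'', and you have unpacked that consequence by building the orderings from $(\R,\C)$, using global simplicity (together with the stability of simplicity under reversal) to see they are simple, and identifying one step of $L_{\R,\C}$ with the action of $\omega_c\circ\omega_r$ so that a solution of $P(A)$ gives compatibility. Your remark about matching conventions is the only point requiring care, and you have handled it correctly.
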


Extending \cite[Theorem 1.1]{CDDYbiem} to the relative case, we have the following result (see also \cite[Theorem 2.7]{CDP}).

\begin{prop}
If there exist compatible simple orderings $\omega_r$ and $\omega_c$ for a $\H_t(m,n;s,k)$, then one of the following
cases occurs:
\begin{itemize}
 \item[(1)] $m,n,s,k$ are all odd;
 \item[(2)] $m$ is odd and $n,k$ are even;
 \item[(3)] $n$ is odd and $m,t$ are even.
\end{itemize}
\end{prop}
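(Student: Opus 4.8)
The plan is to read off the parities of the parameters from the cycle structure of the two permutations $\omega_r$ and $\omega_c$, to turn the compatibility hypothesis into a single parity relation among $m,n,s,k$, and then to see that the three listed cases are exactly the admissible parity patterns once the extra information carried by $t$ and $v=2nk+t$ is added.

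First I would record the relevant cycle structures. Viewing each row ordering $\omega_{\overline{R}_i}$ as the cyclic permutation it induces on the $s$ entries of that row, the map $\omega_r=\omega_{\overline{R}_1}\circ\cdots\circ\omega_{\overline{R}_m}$ is a product of $m$ disjoint $s$-cycles on $\E(A)$, so it has exactly $m$ cycles; likewise $\omega_c$ is a product of $n$ disjoint $k$-cycles, so it has exactly $n$ cycles. Compatibility says that $\omega_c\circ\omega_r$ is a single cycle of length $|\E(A)|=nk$, i.e. it has exactly one cycle. I would then invoke the elementary parity identity $c(\alpha)+c(\beta)+c(\alpha\beta)\equiv N\pmod 2$ valid for any two permutations of an $N$-element set (this is just $\operatorname{sgn}(\alpha)\operatorname{sgn}(\beta)=\operatorname{sgn}(\alpha\beta)$ together with $\operatorname{sgn}(\gamma)=(-1)^{N-c(\gamma)}$), applied with $\alpha=\omega_c$, $\beta=\omega_r$ and $N=nk$. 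This yields the master relation
$$ n+m+1\equiv nk \pmod 2. $$
The same relation also drops out of the proof of Theorem~\ref{thm:biembedding}, where $\bar\rho_0$ is shown to be a single $2nk$-cycle, so that $\operatorname{sgn}(\bar\rho_0)=-1$; rewriting $\bar\rho_0$ through $\omega_r$, $\omega_c$ and the negation map and using $ms=nk$ reproduces the congruence.

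Next I would convert the master relation into parity patterns, using $nk=ms$ throughout. If $n$ and $m$ were both even, the left-hand side would be odd while $nk$ is even, a contradiction, so $n,m$ are never both even. If $n,m$ are both odd, the relation forces $nk$ odd, hence $k$ odd, and then $s$ odd from $ms=nk$; this is case~(1). There remain two mixed branches. If $n$ is odd and $m$ is even, then $nk=ms$ is even, which forces $k$ even; if $n$ is even and $m$ is odd, then $nk=ms$ is even, which forces $s$ even. Thus after this step the first branch already reads $n$ odd, $m$ even, $k$ even, and to land in case~(3) it remains only to prove that $t$ is even; the second branch reads $n$ even, $m$ odd, $s$ even, and to land in case~(2) it remains only to prove that $k$ is even.

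The main obstacle is exactly these two residual parities, because they are invisible to the sign computation: the master relation contains no information about $t$, and it does not separate the parity of $k$ in the branch where $n$ is even. To settle them I would bring in the arithmetic of $\Z_v$ that the permutation argument ignores, namely the zero-sum conditions~(c) together with the $\pm$-pairing of the entries. Concretely, when $v=2nk+t$ is even (equivalently $t$ even) I would reduce every row and column sum modulo $2$: each line then contains an even number of odd entries, and a global count of the odd entries of $\E(A)$ — using that $\E(A)$ meets each pair $\{x,-x\}\subseteq\Z_v\setminus J$ exactly once and that the unique element of order $2$ lies in $J$ — pins down the parity of $k$ in the branch $n$ even, $m$ odd. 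The evenness of $t$ in the branch $n$ odd, $m$ even is the genuinely delicate point; here I would analyse how the negation involution sits inside the single $2nk$-cycle $\bar\rho_0$ (in index form a fixed-point-free involution interchanging the two halves $\E(A)$ and $-\E(A)$) and couple the resulting count with the divisibility constraints on $t$ in the spirit of Proposition~\ref{prop:necc}, which already force $t$ even whenever $t\nmid nk$. I expect this interplay between the cycle-parity relation and the mod-$2$ reduction of the zero-sum conditions to carry all the real weight: the sign computation is routine, and only the extraction of the $k$- and $t$-parities requires care.
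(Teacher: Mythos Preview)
The paper does not actually prove this proposition: it is stated as an extension of \cite[Theorem~1.1]{CDDYbiem}, with a pointer to \cite[Theorem~2.7]{CDP}, and no argument is supplied in the text. So there is no in-paper proof to compare against beyond those citations.

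Your core step is correct and is exactly the standard one used in the cited sources: $\omega_r$ is a product of $m$ disjoint $s$-cycles, $\omega_c$ is a product of $n$ disjoint $k$-cycles, their composite is a single $nk$-cycle, and the sign identity yields $m+n+1\equiv nk\pmod 2$. Your case analysis from this relation is accurate and gives case~(1) cleanly, and it also shows that $m,n$ cannot both be even.

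The genuine gap is in the two ``residual'' parities, which you yourself flag as the main obstacle but then only sketch. In the branch $n$ even, $m$ odd you propose a mod-$2$ reduction of the zero-sum conditions, but that reduction only makes sense when $v=2nk+t$ is even, i.e.\ when $t$ is even; nothing in the hypotheses (and nothing in case~(2)) guarantees this, so as written the argument assumes what is not given. In the branch $n$ odd, $m$ even your plan to read off the parity of $t$ from the negation involution inside $\bar\rho_0$ is not specified: the involution $a\mapsto -a$ is fixed-point-free on $\pm\E(A)$ regardless of whether $t$ is even or odd (when $v$ is even the unique element of order~$2$ lies in $J$, and when $v$ is odd there is none), so it is unclear what count would separate the two parities. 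Finally, the proposition on necessary conditions you invoke applies only to \emph{integer} relative Heffter arrays, so it cannot be used here. These residual parities are precisely what the references supply; the sign computation alone does not yield them, and your sketch does not close the gap.
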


Given a positive integer $n$,
let $0<\ell_1<\ell_2<\ldots<\ell_k<n$ be integers.
We denote by $A_n=A_n(\ell_1,\ell_2,\ldots,\ell_k)$ a $k$-diagonal p.f. array of size $n$ whose filled diagonals are
$D_{\ell_1},D_{\ell_2},\ldots,D_{\ell_k}$.
Let $M=\lcm(\ell_2-\ell_1,\ell_3-\ell_2,\dots,\ell_{k}-\ell_{k-1},\ell_k-\ell_1)$ and set
$A_{n+M}=A_{n+M}(\ell_1,\ell_2,\ldots,\ell_k)$.
We now study the Crazy Knight's Tour Problem for such arrays $A_n$. As a consequence, we will obtain new
biembeddings of cycle decompositions of complete graphs on orientable surfaces.

\begin{thm}\label{Simone}
Suppose that the problem $P(A_n)$ admits a solution $(\R,\C)$ where
$\R=(1,1,$ $\dots,1)$ and $\C=(c_1,c_2,\dots,c_{n-\ell_k+1},1,1,\ldots,1)$.
Then  $P(A_{n+M})$ admits the solution $(\R',\C')$ where
$\R'=(1,1,\ldots,1)$ and $\C'=(c_1,c_2,\dots,c_{n-\ell_k+1},1,1,\ldots,1)$.
\end{thm}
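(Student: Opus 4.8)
The plan is to analyse the tour $L_{\R,\C}$ locally, recording only which of the $k$ diagonals each visited cell lies on. Writing the filled diagonals cyclically as $D_{\ell_1},\dots,D_{\ell_k}$, I would first establish the transition rules for a single step under the prescribed orientations. Since every row is oriented left to right, a row-move sends a cell of $D_{\ell_a}$ to the next filled diagonal $D_{\ell_{a-1}}$ (indices cyclic modulo $k$) and increases its column index by $\delta_a:=\ell_a-\ell_{a-1}$, with the convention $\delta_1=\ell_1-\ell_k$ read modulo the array size. A subsequent column-move oriented downward advances to $D_{\ell_{a+1}}$, while one oriented upward retreats to $D_{\ell_{a-1}}$, in each case with the corresponding shift of the row index. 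The decisive consequence is that a full step landing in a downward column returns to the same diagonal and advances the cell by $(\delta_a,\delta_a)$, i.e. it \emph{marches} along $D_{\ell_a}$; only upward columns make the tour jump to a different diagonal, and all upward columns lie in the \emph{clean block} of columns $[1,n-\ell_k+1]$, where the diagonals are unwrapped.

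Next I would compare the two tours. By the definition of $\C'$, the orientations on the clean block $[1,n-\ell_k+1]$ are identical in $A_n$ and $A_{n+M}$, while every other column of $A_{n+M}$ — in particular the $M$ newly inserted ones — is oriented downward. Hence the upward columns, and therefore the diagonal-jumping behaviour of the tour, form literally the same set in both arrays, whereas passing from size $n$ to size $n+M$ only inserts $M$ extra downward columns and $M$ extra filled cells on each diagonal. The core observation is that $M=\lcm(\ell_2-\ell_1,\dots,\ell_k-\ell_{k-1},\ell_k-\ell_1)$ is divisible by each march step $\delta_a$. Thus each maximal march of $A_n$ along a diagonal reappears in $A_{n+M}$ lengthened so as to traverse exactly the newly inserted downward columns on that diagonal, after which it re-enters the clean block and jumps precisely as it did in $A_n$. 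Splicing these extra march steps into the cycle $L_{\R,\C}$ produces a closed tour $L_{\R',\C'}$ of $A_{n+M}$; since one only lengthens segments of a single existing cycle and the inserted steps account for exactly the $Mk$ new filled cells, the result is again one cycle covering every filled position, so $(\R',\C')$ solves $P(A_{n+M})$.

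The step I expect to be the main obstacle is proving that the lengthened marches re-enter the clean block at \emph{exactly} the same column as the original marches, so that the jump pattern is genuinely preserved and the tour does not fragment into several shorter cycles. This is precisely where $M$ being a least common multiple is used: because $n+M\equiv n$ modulo each difference $\ell_a-\ell_{a-1}$ and modulo $\ell_k-\ell_1$, the arithmetic progression of columns swept by a march (common difference $\delta_a$, taken modulo the array size) meets the fixed clean block at the same position in both arrays, despite the $M$ extra downward columns and the shifted wraparound. Making this congruence bookkeeping rigorous — in particular controlling the transition across the boundary between the wrapped columns and the clean block — is the technical heart of the argument, after which the coverage and single-cycle assertions follow from the counting described above.
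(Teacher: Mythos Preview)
Your plan is correct and is essentially the paper's own argument: the paper formalizes your ``jump pattern at the clean block'' as a return map $\vartheta:B\to B$ on the cells lying in the upward columns, proves $\vartheta_n=\vartheta_{n+M}$ by exactly the divisibility-by-$M$ computation you isolate as the main obstacle, and then concludes that every cell of $A_{n+M}$ reaches $B$ under $L_{\R',\C'}$ (which is the rigorous version of your splicing/counting step). One small slip to fix when you write it up: after the row-move to $D_{\ell_{a-1}}$, a downward column-move lands back on $D_{\ell_a}$ and an upward one on $D_{\ell_{a-2}}$, not on $D_{\ell_{a+1}}$ and $D_{\ell_{a-1}}$ as you wrote---your key conclusion that a downward step stays on the same diagonal is nevertheless correct.
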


\begin{proof}
We denote by $E$ the set of indices $i$ such that $c_i=-1$
and by $B_n$ the p.f. array of size $n$ obtained from $A_n$ by replacing each column $\overline{C}_j$, when $j\not \in E$, with
an empty column.
Also, we denote by $B_{n+M}$ the p.f. array of size $n+M$ obtained from $A_{n+M}$ in the same way using the same set $E$.
As $E\subseteq [1,n-\ell_{k}+1]$, the nonempty cells of $B_n$ are of the form $((e-1)+\ell_i,e)$ for $e\in E$ and $i\in [1,k]$.
Since $(e-1)+\ell_i\leq n$, we have $skel(B_n)=skel(B_{n+M})$.
So we can set $B=skel(B_{n})=skel(B_{n+M})$.

For any $x=(i_1,j_1) \in B$, consider the sequence $X=L_{\R,\C}(i_1,j_1)$ defined on $skel(A_n)$ and let $y$ be the second element of $X$ that belongs to $B$
if $|X \cap B|\geq 2$, $y=x$ otherwise.
Define  $\vartheta_n: B\to B$ by setting $\vartheta_n(x)=y$.
Take $(\R',\C')$ as in the statement and
define the map $\vartheta_{n+M}: B \to B$ as before considering the sequence $L_{\R',\C'}(x)$ defined on $skel(A_{n+M})$.

In order to prove that $\vartheta_n(x)=\vartheta_{n+M}(x)$, for any $h\in [1,k]$, we set:
$$ \sigma(h)=\left\{\begin{array}{ll}
 \ell_1-\ell_{k-1} & \mbox{ if } h=1;\\
 \ell_2-\ell_{k} & \mbox{ if } h=2;\\
\ell_h-\ell_{h-2} &  \mbox{ otherwise}
\end{array}\right.\quad \textrm{ and }
\quad  \delta(h)=\left\{\begin{array}{ll}
\ell_1-\ell_{k}  & \mbox{ if } h=1;\\
\ell_h-\ell_{h-1} & \mbox{ otherwise}.
\end{array}\right.$$

Set $x=(i_1,j_1)\in B$, hence $x\in D_{\ell_h}$ for some $h\in [1,k]$.
We have that $\vartheta_n(x)=(i_1+\delta(h)\lambda-\sigma(h),j_1+\delta(h)\lambda)\pmod{n}$
where $\lambda$ is the minimum positive integer such that $(j_1+\delta(h)\lambda)\pmod{n}\in E$.
Similarly $\vartheta_{n+M}(x)=(i_1+\delta(h)\lambda'-\sigma(h),j_1+\delta(h)\lambda')\pmod{n+M}$
where $\lambda'$ is the minimum positive integer such that $(j_1+\delta(h)\lambda')\pmod{n+M}\in E$.
Write  $j_1+\delta(h)\lambda=qn+r$ where $1\leq r\leq n$, which means $r\in E$.

If $q=0$, we clearly have  $\lambda'=\lambda$ and hence $\vartheta_{n+M}(x)=\vartheta_n(x)$.
Otherwise, since the last $M$ elements of $\C'$ are equal to $1$, we have that $\lambda'=\lambda+\frac{qM}{\delta(h)}$. Hence:
$$\begin{array}{rcl}
\vartheta_{n+M}(x) & =& \left(i_1+\delta(h)\left(\lambda+\frac{qM}{\delta(h)}\right)-\sigma(h),
j_1+\delta(h)\left(\lambda+\frac{qM}{\delta(h)}\right)\right)\pmod{n+M}\\
 & =& (i_1+\delta(h)\lambda+qM-\sigma(h),j_1+\delta(h)\lambda+qM)\pmod{n+M}\\
 & =& ((i_1-j_1)+q(n+M)+r-\sigma(h),q(n+M)+r)\pmod{n+M}\\
 & =& ((i_1-j_1)+r-\sigma(h),r)\pmod{n+M}.
\end{array}$$

It is not hard to see that $1\leq (i_1-j_1)+r-\sigma(h)\leq n$; also recall that $1\leq r\leq n$.
Hence
$$\vartheta_{n+M}(x)=((i_1-j_1)+r-\sigma(h),r).$$
On the other hand, by  $j_1+\delta(h)\lambda=qn+r$, we obtain:
$$((i_1-j_1)+r-\sigma(h),r)=(i_1+\delta(h)\lambda-\sigma(h),j_1+\delta(h)\lambda)\pmod{n}=\vartheta_n(x).$$
So we have proved that $\vartheta_{n+M}(x)=\vartheta_{n}(x)$ for any $x\in B$.

For any $(i,j)\in skel(A_n)$, since $(\R,\C)$ is a solution of $P(A_n)$, we have
 $L_{\R,\C}(i,j)\cap B=B$.
 Moreover, since $\vartheta_n(x)=\vartheta_{n+M}(x)$ for any $x\in B$, it follows that for any $(i', j')\in skel(A_{n+M})$ we have
  $L_{\R',\C'}(i',j')\cap B$ is either $B$ or $\emptyset$.
  If there exists $(\bar{\imath},\bar{\jmath})\in skel(A_{n+M})$ such that
  $L_{\R',\C'}(\bar\imath,\bar\jmath)\cap B=\emptyset$ then for any $\lambda' \in \mathbb{N}$,
  the cell $(\bar\imath+\delta(\bar{h})\lambda',\bar\jmath+\delta(\bar{h})\lambda')\pmod{n+M}$ is not in $B$.
  On the other hand
there exists $\lambda \in \mathbb{N}$, such that
 $(\bar\imath+\delta(\bar{h})\lambda,\bar\jmath+\delta(\bar{h})\lambda)\pmod{n}\in B$,
since $(\R,\C)$ is a solution of $P(A_n)$.
Also, since $\delta(\bar{h})$ divides $M$ there exists $\bar{q}\in \mathbb{N}$ such that
  $(\bar\imath+\delta(\bar{h})\bar\lambda,\bar\jmath+\delta(\bar{h})\bar\lambda)\pmod{n+M}\in B$, where
	$\bar \lambda=\lambda + \bar q M / \delta(\bar h)$.
  Hence
   $L_{\R',\C'}(\bar{\imath},\bar{\jmath})\cap B\neq\emptyset$, which is a contradiction.
Thus it follows that $(\R',\C')$ is a solution of $P(A_{n+M})$.
\end{proof}

\begin{cor}\label{precedente}
Let $k\equiv 3 \pmod 4$ and $n\equiv 1\pmod {4}$ be such that $n\geq k$ and $3\leq k\leq 119$.
Let $A_n$ be a $k$-diagonal array whose filled diagonals are $D_1,D_2,\ldots,D_{k-3}$, $D_{k-1}$,
$D_{k}$ and $D_{k+1}$. Then $P(A_n)$ admits a solution.
\end{cor}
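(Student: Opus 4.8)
The plan is to use Theorem \ref{Simone} as an inductive engine and to reduce the statement to finitely many base cases that are then checked by computer. First I would record the diagonal data: here $\ell_i=i$ for $1\le i\le k-3$, while $\ell_{k-2}=k-1$, $\ell_{k-1}=k$, $\ell_k=k+1$. Thus all consecutive gaps $\ell_{i+1}-\ell_i$ equal $1$ except for the single gap $\ell_{k-2}-\ell_{k-3}=2$ between $D_{k-3}$ and $D_{k-1}$, and $\ell_k-\ell_1=k$. Hence
$$M=\lcm(\ell_2-\ell_1,\dots,\ell_k-\ell_{k-1},\ell_k-\ell_1)=\lcm(1,2,k)=2k,$$
where we used that $k$ is odd. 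Theorem \ref{Simone} therefore asserts that a solution of $P(A_n)$ of the special shape $\R=(1,\dots,1)$ and $\C=(c_1,\dots,c_{n-k},1,\dots,1)$, with the last $\ell_k-1=k$ entries equal to $1$, propagates to a solution of $P(A_{n+2k})$ of the very same shape.

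Next I would set up the induction so that it remains inside the class $n\equiv 1\pmod 4$. Since $2k\equiv 2\pmod 4$, one application of Theorem \ref{Simone} lands on $n+2k\equiv 3\pmod 4$, but two applications increment $n$ by $4k\equiv 0\pmod 4$ and so preserve $n\equiv 1\pmod 4$; the intermediate array of size $n+2k$ is produced by the theorem itself and needs no separate treatment. It therefore suffices to establish the base cases
$$n\in\{\,k+2,\ k+6,\ \dots,\ 5k-2\,\},$$
the $k$ smallest admissible values, where $k+2$ is the least $n\equiv 1\pmod 4$ exceeding $\ell_k=k+1$. These $k$ values are pairwise incongruent modulo $2k$ and exhaust the odd residues modulo $2k$. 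Indeed, for any admissible $n\ge k+2$ there is a unique base case $n_0$ with $n\equiv n_0\pmod{2k}$, and since also $n\equiv n_0\equiv 1\pmod 4$ we get $4k\mid n-n_0$, so $n=n_0+4kj$ for some $j\ge 0$; applying Theorem \ref{Simone} $2j$ times then carries a solution of $P(A_{n_0})$ to one of $P(A_n)$, of the required shape at each stage.

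The remaining, and genuinely hard, step is to exhibit for each base case a solution $(\R,\C)$ of the prescribed form. I would carry this out by a computer search over the free column orientations $c_1,\dots,c_{n-k}$ (the rows, and the last $k$ columns, being fixed to $1$), testing in each instance whether the induced list $L_{\R,\C}$ visits every filled cell of $A_n$. The main obstacle is precisely the feasibility of this search: the smallest base cases are immediate, but the largest has size $5k-2$ and up to $2^{4k-2}$ candidate orientations, so a naive exhaustive scan is hopeless for large $k$, and one must either exploit the combinatorial structure of the tour or use an efficient heuristic search to locate an admissible orientation. This is exactly what forces the restriction $3\le k\le 119$; completing the verification for every $k\equiv 3\pmod 4$ in that range finishes the proof.
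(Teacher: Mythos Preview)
Your proof is correct and follows the same strategy as the paper: compute $M=\lcm(1,2,k)=2k$, reduce the statement to finitely many base cases via Theorem~\ref{Simone}, and verify those by computer search over the free column orientations. The only difference lies in the choice of base cases. The paper simply checks \emph{every} $n$ in the contiguous interval $[k+2,3k+2]$ (a full period of length $M+1$, regardless of residue modulo $4$) and then applies Theorem~\ref{Simone} once per step of size $2k$; you instead restrict attention to the $k$ values $n\equiv 1\pmod 4$ in $[k+2,5k-2]$ and compensate by iterating the theorem in pairs of steps to preserve the residue class. Your version halves the number of base cases but pushes the largest one from $3k+2$ up to $5k-2$, so the search space for the hardest instance grows from roughly $2^{2k+2}$ to $2^{4k-2}$; the paper's cruder choice is computationally lighter at the top end.
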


\begin{proof}
Let $k=4h+3$ and $M=\lcm(2,4h+3)$, that is $M=2(4h+3)$.
For any $1\leq h\leq 29$,  with the help of a computer, we have checked the existence of a solution of $P(A_n)$ for any $n\in[4h+5,4h+5+M]=[4h+5,12h+11]$,
that satisfies the hypothesis of Theorem \ref{Simone}.
Hence the claim follows by this theorem.
\end{proof}

\begin{cor}
  Let $k\equiv 3 \pmod 4$ and $n\equiv 1\pmod {4}$ such that  $n\geq k$ and $3\leq k\leq 119$.
  Then there exists a globally simple $\H_1(n;k)$ with
  orderings $\omega_r$ and $\omega_c$ which are both simple and compatible.
  As a consequence, there exists a biembedding of cyclic $k$-cycle decompositions of the complete graph
  $K_{2nk+1}$   into an orientable surface.
  \end{cor}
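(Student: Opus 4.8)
The plan is to combine Corollary \ref{precedente} with Corollary \ref{preprecedente}, the one new ingredient being a globally simple integer Heffter array $A=\H_1(n;k)$ whose set of filled cells is exactly the $k$-diagonal skeleton $\{D_1,\dots,D_{k-3},D_{k-1},D_k,D_{k+1}\}$ of the array $A_n$ of Corollary \ref{precedente}. Granting such an $A$, and noting that $P(A)$ depends only on the skeleton, Corollary \ref{precedente} supplies a solution $(\R,\C)$ of $P(A)$. Reading each row and column of $A$ in the orientations prescribed by $\R$ and $\C$ turns the natural orderings into orderings $\omega_r,\omega_c$ that are at once simple (global simplicity is preserved under the reversals induced by $\R,\C$, since $\omega^{-1}$ is simple whenever $\omega$ is) and compatible (the list $L_{\R,\C}$ visits every filled cell, so $\omega_c\circ\omega_r$ is a single cycle). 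Corollary \ref{preprecedente} then yields a biembedding of $\mathcal{D}_{\omega_r^{-1}}$ and $\mathcal{D}_{\omega_c}$ in an orientable surface; since $t=1$ the host graph is $K_{\frac{2nk+1}{1}\times 1}=K_{2nk+1}$, and both decompositions consist of $k$-cycles because each row and column of the square array has $k$ filled cells.

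Producing $A$ is the substantive step. The natural attempt is to fill each diagonal $D_\ell$ with an arithmetic progression, placing in its column-$c$ cell the value $\phi(\ell,c)=a(\ell)+b(\ell)c$. Away from the toroidal wrap-around this makes every column sum equal to $\sum_\ell a(\ell)+c\sum_\ell b(\ell)$ and every row sum affine in the row index, so the zero-sum requirements collapse to the linear system $\sum_\ell a(\ell)=\sum_\ell b(\ell)=\sum_\ell \ell\,b(\ell)=0$; the offsets $a(\ell)$ and steps $b(\ell)$ are then tuned so that the absolute values partition $\{1,\dots,nk\}$ into $k$ consecutive length-$n$ blocks, one per diagonal. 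Two features make this delicate. First, the diagonals wrap around the torus, so the cells where a progression crosses the boundary must be reassigned by hand to keep the \emph{integer} row and column sums, and not merely their residues modulo $2nk+1$, equal to zero. Second, $k\equiv3\pmod4$ is odd, so the steps $b(\ell)$ cannot all be $\pm1$ and a controlled number of diagonals must carry a different common difference. The hypotheses $n\equiv1$ and $k\equiv3\pmod4$, which give $nk\equiv3\pmod4$, are precisely the existence condition recalled in the Introduction, and they are what let this bookkeeping close up.

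The main obstacle is thus the explicit construction of $A$, where beyond plain existence one must secure global simplicity: in the natural left-to-right (resp.\ top-to-bottom) order the partial sums of every row (resp.\ column) must be pairwise distinct in $\Z_{2nk+1}$. For $k\le5$ this is automatic, as noted after Proposition \ref{HeffterToDecompositions}, so $k=3$ is immediate; for $7\le k\le119$ it has to be checked for the chosen filling, and since the entries of any single line occupy prescribed blocks whose total spread is small compared with $2nk+1$, I would establish it by a direct bound on the partial sums rather than by an exhaustive search. I would expect a globally simple filling to be available for every admissible $k\equiv3\pmod4$; the ceiling $k\le119$ is not forced here but inherited from Corollary \ref{precedente}, whose computer check of the cases $k=4h+3$, $1\le h\le29$, is promoted to all admissible $n$ by Theorem \ref{Simone}. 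Consequently, once the array is in hand, the range of the statement is governed entirely by the Crazy Knight's Tour input.
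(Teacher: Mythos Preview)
Your logical skeleton is exactly the paper's: combine Corollary~\ref{precedente} (solution of $P(A_n)$ for the skeleton $D_1,\dots,D_{k-3},D_{k-1},D_k,D_{k+1}$) with Corollary~\ref{preprecedente} (globally simple array plus a solution of $P(A)$ yields the biembedding), using that $P(A)$ depends only on $skel(A)$. Your remarks about reversals preserving simplicity and about $t=1$ giving $K_{2nk+1}$ are correct and are precisely what Corollary~\ref{preprecedente} encapsulates.

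The one place where you diverge from the paper is the ``substantive step'' of producing the globally simple $\H_1(n;k)$ with that prescribed skeleton. You sketch a diagonal arithmetic-progression construction and flag the wrap-around and parity issues, but you do not actually carry it through; as written, this leaves the proof incomplete. In the paper this step is not carried out either: it is simply \emph{cited}. The existence of a globally simple integer $\H_1(n;k)$ whose filled diagonals are exactly $D_1,\dots,D_{k-3},D_{k-1},D_k,D_{k+1}$, for $k\equiv 3\pmod 4$ and $n\equiv 1\pmod 4$, is the main result of \cite{BCDY}. Once you invoke that reference, nothing further is needed; your observation that the ceiling $k\leq 119$ comes solely from the computer verification behind Corollary~\ref{precedente} (and not from the Heffter-array side) is correct.
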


\begin{proof}
The existence of a globally simple $\H_1(n;k)$, whose filled diagonals are $D_1,D_2,\ldots,$
$D_{k-3},D_{k-1},D_{k}, D_{k+1}$,
 was proven in \cite{BCDY}.
The result follows from Corollaries \ref{preprecedente} and \ref{precedente}.
\end{proof}

\section{Direct constructions of globally simple $\H_t(n;k)$}\label{sec:constructions}

Many of the constructions we will present  are based on filling in the cells of a set of diagonals.
In order to describe these constructions we use the same procedure introduced in \cite{DW}.
In an $n\times n$ array $A$ the procedure $diag(r,c,s,\Delta_1,\Delta_2,\ell)$ installs the entries
$$A[r+i\Delta_1,c+i\Delta_1]=s+i\Delta_2\ \textrm{for}\ i\in[0,\ell-1],$$
where by $A[i,j]$ we mean the element of $A$ in position $(i,j)$.
The parameters used in the $diag$ procedure have the following meaning:
\begin{itemize}
  \item $r$ denotes the starting row,
  \item $c$ denotes the starting column,
  \item $s$ denotes the entry $A[r,c]$,
  \item $\Delta_1$ denotes the increasing value of the row and column at each step,
  \item $\Delta_2$ denotes how much the entry is changed at each step,
  \item $\ell$ is the length of the chain.
\end{itemize}
We will write $[a,b]_{(W)}$ to mean $supp(W)=[a,b]$.

\begin{prop}\label{prop:3-t=n}
For every odd $n\geq 3$ there exists an integer cyclically $3$-diagonal $\H_{n}(n;3)$.
\end{prop}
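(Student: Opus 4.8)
The plan is a direct construction. First I fix the arithmetic: here $t=n$ and $k=3$, so $v=2nk+t=7n$, and the subgroup $J$ of order $t=n$ is $J=7\Z_{7n}$, whence $\Z_{7n}\setminus J$ is exactly the set of elements of $\Z_{7n}$ not divisible by $7$. For an \emph{integer} array the entries are read in $\pm\{1,\dots,\lfloor 7n/2\rfloor\}$, so each sign-pair $\{x,-x\}$ has a unique representative of smallest absolute value, and condition (b) forces the support to be exactly
$$ S=\Bigl[1,\tfrac{7n-1}{2}\Bigr]\setminus 7\Z. $$
Since $n$ is odd there are $(n-1)/2$ multiples of $7$ in this range, so $|S|=\frac{7n-1}{2}-\frac{n-1}{2}=3n=|\E(A)|$: the array must use every element of $S$ exactly once, with a single sign to be chosen. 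Being cyclically $3$-diagonal, I may take the filled cells to be the consecutive diagonals $D_1,D_2,D_3$; writing $a_{i,j}$ for the entry of $D_i$ in column $j$ (i.e.\ in cell $(i-1+j,j)$, indices mod $n$), condition (c) becomes the two systems
$$ a_{1,c}+a_{2,c}+a_{3,c}=0\quad(1\le c\le n), \qquad a_{1,r}+a_{2,r-1}+a_{3,r-2}=0\quad(1\le r\le n), $$
both to hold in $\Z$.

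Second, I would produce the entries explicitly through the $diag$ procedure. Imposing $a_{3,j}=-(a_{1,j}+a_{2,j})$ annihilates every column sum at once, and substituting this into the row system collapses it to the single recurrence $a_{1,r}-a_{1,r-2}=a_{2,r-2}-a_{2,r-1}$. The affine skeleton $a_{1,j}=j,\ a_{2,j}=C-2j,\ a_{3,j}=j-C$ solves both systems formally, but its absolute values land on multiples of $7$ and do not exhaust $S$; it only serves as a guide. The genuine construction replaces this by a piecewise-affine one: cut each diagonal into a bounded number of chains, each installed by one call $diag(r,c,s,\Delta_1,\Delta_2,\ell)$, with the steps $\Delta_2$ chosen (for instance as multiples of $7$, so that a chain stays in a fixed nonzero residue class mod $7$ and hence never meets $J$) and the signs arranged so that (i) within each diagonal the progressions telescope and preserve the two sum identities, and (ii) the supports of the chains are disjoint and cover $S$, skipping precisely the $(n-1)/2$ forbidden multiples of $7$. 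Concretely I expect to assign a low band of $S$ to $D_1$, a middle band to $D_2$, and the complementary high band to $D_3$ with the opposite sign, inserting short correction subchains exactly where a multiple of $7$ would otherwise appear, and treating the boundary cells affected by the wraparound of the column index past $n$ as separate chains.

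Third, the verification separates into the three defining conditions. Condition (a) and the cyclic $3$-diagonal shape are immediate from the skeleton. For (b) I would record $supp$ of each chain as an explicit interval (or interval with a regular gap), check pairwise disjointness and that the union equals $S$, and confirm by a residue-mod-$7$ bookkeeping along each progression that no entry is divisible by $7$. For the integer form of (c), the column sums vanish identically from $a_{3,j}=-(a_{1,j}+a_{2,j})$, while the row sums vanish because the chosen common differences make $a_{1,r}-a_{1,r-2}$ and $a_{2,r-2}-a_{2,r-1}$ coincide within each chain; only the finitely many rows meeting a chain boundary or the wraparound need to be checked by hand.

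The main obstacle is exactly the simultaneous satisfaction of (b) and the integer version of (c). Forcing both row and column sums to vanish pins the entries to rigid arithmetic progressions, yet the support must omit the multiples of $7$ and hit every other value of $[1,\frac{7n-1}{2}]$ once; reconciling this rigidity with the irregular holes at $7\Z$ is what dictates where the correction subchains must be placed, and ensuring these close up correctly under the modular wraparound — without repeating an absolute value or accidentally landing in $J$ — is the delicate point. I would expect to fix the offsets by first settling the smallest cases $n=3,5,7$ explicitly and then confirming that the boundary chains behave uniformly for all odd $n\ge 3$.
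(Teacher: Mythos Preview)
Your arithmetic setup is correct and your general framework (cyclically $3$-diagonal shape, chains installed via the $diag$ procedure, step sizes that are multiples of $7$ so that each chain stays in a fixed nonzero residue class modulo $7$) is exactly the paper's approach. What is missing is the construction itself: you describe a plan but never write down the entries, and a result of this type is proved by exhibiting the array.

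More importantly, your concrete intuition about how the chains should look would lead you astray. You propose to assign a \emph{low band} of $S$ to one diagonal, a \emph{middle band} to the next, and a \emph{high band} to the third, with ``short correction subchains'' at the multiples of $7$. This is in tension with your own earlier (correct) remark about step~$\pm 7$: if every chain has step a multiple of $7$, no correction is ever needed, but then no diagonal can occupy a contiguous band of $S$. The paper's construction is much simpler than you anticipate. The main diagonal $D_1$ is a \emph{single} chain of length $n$ with $\Delta_1=1$ and $\Delta_2=7$, running from $-\frac{7n-9}{2}$ to $\frac{7n-5}{2}$; its support is the union of the residue classes $1$ and $6$ modulo $7$ inside $[1,\frac{7n-1}{2}]$. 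Each of the two off-diagonals is split into just two chains (one on odd-indexed cells, one on even-indexed cells, $\Delta_1=2$, $\Delta_2=-7$), so that altogether there are only five chains, each living in a single residue class modulo $7$; together their supports partition the six nonzero classes exactly. No ad~hoc cells, no wraparound corrections, and no band structure. The row and column sums then vanish identically for all $n$, not just away from ``boundaries''. Once you commit to step $\pm 7$ and to splitting the side diagonals by parity of the column index rather than by magnitude, the whole verification becomes a handful of two-case checks.
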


\begin{proof}
We construct an $n\times n$ array $A$ using the following procedures labeled $\texttt{A}$ to $\texttt{E}$:
$$\begin{array}{ll}
\texttt{A}:\;   diag\left(1,1, -\frac{7n-9}{2} ,  1 ,7, n\right); &
\texttt{B}:\;   diag\left(1,2, \frac{7n-3}{2}  ,   2,  -7 , \frac{n+1}{2}\right);\\[3pt]
\texttt{C}:\;   diag\left(2,3, -5, 2, -7 , \frac{n-1}{2}\right); &
\texttt{D}:\;   diag\left(2,1, \frac{7n-13}{2}, 2, -7 ,\frac{n+1}{2}\right);\\[3pt]
\texttt{E}:\;   diag\left(3,2, -10, 2, -7 ,\frac{n-1}{2}\right). &
\end{array}$$

We prove that the array constructed above is an integer  cyclically $3$-diagonal $\H_{n}(n;3)$.
To aid in the proof we give a schematic picture of where each of the diagonal procedures fills cells (see Figure \ref{fig2}).
Note that each row and each column contain exactly $3$ elements. We now check that the elements in every row sum to zero
(in $\Z$).

\begin{figure}[ht]
\begin{footnotesize}
 \begin{center}
\begin{tabular}{|c|c|c|c|c|c|c|c|c|} \hline
 \texttt{A} & \texttt{B} &  &  &  & & &   & \texttt{D}\\ \hline
 \texttt{D} & \texttt{A} & \texttt{C} & & & & & &  \\ \hline
 &\texttt{E} & \texttt{A} & \texttt{B} & & & & &     \\ \hline
 & &\texttt{D} & \texttt{A} & \texttt{C}  & & & &    \\ \hline
 & & &\texttt{E} & \texttt{A} & \texttt{B} & & &     \\ \hline
 & & & &\texttt{D} & \texttt{A} & \texttt{C} & &     \\ \hline
 & & & & &\texttt{E} & \texttt{A} & \texttt{B} &      \\ \hline
 & & & & & &\texttt{D} & \texttt{A} & \texttt{C}     \\ \hline
\texttt{B} & & & & & & &\texttt{E} & \texttt{A}     \\ \hline
\end{tabular}
\end{center}
 \end{footnotesize}
\caption{Scheme of construction with $n=9$.}\label{fig2}
\end{figure}

\begin{description}
\item[Row $1$] There is the first value of the \texttt{A} diagonal and of the \texttt{B} diagonal and the last of the
\texttt{D} diagonal.
The sum is
$$-\frac{7n-9}{2}+\frac{7n-3}{2}-3=0.$$
\item[Row $2$  to $n$] There are two cases depending on whether the row $r$ is even or odd.
If $r$ is even, then write $r=2i+2$ where $i\in\left[0,\frac{n-3}{2}\right]$. Notice that from the \texttt{D},
\texttt{A} and \texttt{C} diagonal cells we get the following sum:
$$\left(\frac{7n-13}{2}-7i\right)+\left(-\frac{7n-23}{2}+14i\right)+\left(-5-7i\right)=0.$$
If $r$ is odd, then write $r=2i+3$ where $i\in\left[0,\frac{n-3}{2}\right]$.
From the \texttt{E}, \texttt{A} and \texttt{B} diagonal cells we get the following sum:
$$\left(-10-7i\right)+\left(-\frac{7n-37}{2}+14i\right)+\left(\frac{7n-17}{2}-7i\right)=0.$$
\end{description}
So we have shown that all row sums are zero. Next we check that the columns all add to zero.
\begin{description}
\item[Column $1$] There is the first value of the \texttt{A} diagonal and of the \texttt{D} diagonal and the last of the
\texttt{B} diagonal.
The sum is $$-\frac{7n-9}{2}+\frac{7n-13}{2}+2=0.$$
\item[Column $2$  to $n$] There are two cases depending on whether the column $c$ is even or odd.
If $c$ is even, then write $c=2i+2$ where $i\in\left[0,\frac{n-3}{2}\right]$.
Notice that from the \texttt{B}, \texttt{A} and \texttt{E} diagonal cells we get the following sum:
$$\left(\frac{7n-3}{2}-7i\right)+\left(-\frac{7n-23}{2}+14i\right)+\left(-10-7i\right)=0.$$
If $c$ is odd, then write $c=2i+3$ where $i\in\left[0,\frac{n-3}{2}\right]$.
From the \texttt{C}, \texttt{A} and \texttt{D} diagonal cells we get the following sum:
$$\left(-5-7i\right)+\left(-\frac{7n-37}{2}+14i\right)+\left(\frac{7n-27}{2}-7i\right)=0.$$
\end{description}
So we have shown that each column sums to zero.
Also, it is not hard to see that:
$$supp(\texttt{A})=\left\{1,8,15,\ldots,\frac{7n-5}{2}\right\}\cup \left\{6,13,20,\ldots,\frac{7n-9}{2} \right\}, $$
\begin{align*}
& supp(\texttt{B})=\left\{2,9,16,\ldots,\frac{7n-3}{2}\right\}, &
supp(\texttt{C})=\left\{5,12,19,\ldots,\frac{7n-11}{2}\right\},\\
& supp(\texttt{D})=\left\{3 \right\}\cup \left\{4,11,18,\ldots,\frac{7n-13}{2}\right\}, &
supp(\texttt{E})=\left\{10,17,24,\ldots,\frac{7n-1}{2}\right\},
\end{align*}
hence $supp(A)=\left[1,\frac{7n-1}{2}\right]\setminus\left\{7,14,21,\ldots,\frac{7n-7}{2}\right\}$.
This concludes the proof.
\end{proof}

\begin{ex}\label{example42}
Following the proof of Proposition \ref{prop:3-t=n} we obtain the
integer $\H_9(9;3)$ below.
 \begin{center}
\begin{footnotesize}
$\begin{array}{|r|r|r|r|r|r|r|r|r|}
\hline -27 & 30 &   &  &  &  &  & & -3 \\
\hline  25 & -20 & -5 &  &  &  &  &  &  \\
\hline   & -10 & -13 & 23 &   &  &  & & \\
\hline   &   & 18 & -6 & -12 &  &  &  & \\
\hline   &  &  &   -17 & 1 & 16  & &  &\\
\hline   &  &  &   &   11 & 8 & -19 & &  \\
\hline    &  &  &  &  & -24 & 15 & 9 &   \\
\hline   &  &  &  &  &   & 4 & 22  & -26 \\
\hline    2 &  &   &  &  &  & & -31 & 29    \\
\hline
\end{array}$
\end{footnotesize}
\end{center}

We can use this example to briefly explain how the construction has been obtained (a similar idea will be used also 
in Proposition \ref{prop:3-t=2n} below). First of all, we have to avoid the multiples of $\frac{2nk}{t}+1=7$, so
we work modulo $7$.
The diagonal $D_1$ consists of elements, all congruent to $1$ modulo $7$, arranged in arithmetic progression where,
for instance, the central cell is filled with $1$.
The other two filled diagonals are obtained in such a way that the elements of 
$D_9$ are all congruent to $2$ modulo $7$ and the elements of $D_2$ are all congruent to $-3$ modulo $7$.
This can be achieved filling the cell $(9,1)$ with the integer $2$: it is now easy to obtain the elements in the remaining cells, 
remembering that the row/column sums are $0$.
\end{ex}

\begin{prop}\label{prop:3-t=2n}
For every odd $n\geq 3$ there exists an integer cyclically $3$-diagonal $\H_{2n}(n;3)$.
\end{prop}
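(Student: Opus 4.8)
The plan is to follow the template of the proof of Proposition~\ref{prop:3-t=n} (and the heuristic explained in Example~\ref{example42}), transporting all of the arithmetic from modulus $7$ to modulus $4$. Here $k=3$ and $t=2n$, so that $v=2nk+t=8n$, and the subgroup $J\leq\Z_{8n}$ of order $t=2n$ is $J=4\Z_{8n}$, the set of multiples of $4$. Condition (b) of Definition~\ref{def:RelativeH} then amounts to requiring that, among the integers of $\left[1,\frac{8n}{2}\right]=[1,4n]$, the array use exactly those that are \emph{not} divisible by $4$; there are $3n=nk$ of them, one for each filled cell. Concretely, the goal is to build an $n\times n$ integer array, cyclically $3$-diagonal on the three consecutive diagonals $D_n,D_1,D_2$, with
$$supp(A)=[1,4n]\setminus\{4,8,\ldots,4n\}.$$
As a sanity check, the relevant necessary condition is Proposition~\ref{prop:necc}(3) (since $t=2n$ divides $2nk=6n$ but not $nk=3n$), which reads $t+2nk=8n\equiv0\pmod 8$ and indeed holds.

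For the construction I would fix the residues modulo $4$ of the three diagonals to be $1,1,2$: the main diagonal $D_1$ and the superdiagonal $D_n$ are to carry entries $\equiv 1\pmod 4$, while the subdiagonal $D_2$ carries entries $\equiv 2\pmod 4$. This pattern is essentially forced: each row (and each column) contains one cell from each diagonal and must sum to $0$, and the signed entries must realize all of $\{1,2,3\}$ modulo $4$; the even residue can only be produced by $\pm 2$, whereas a positive entry $\equiv 1$ contributes an absolute value $\equiv 1$ and a negative one an absolute value $\equiv 3$, so that $1+1+2\equiv 0\pmod 4$ is the admissible combination. In this way the $n$ cells of $D_2$ realize the $n$ absolute values $\{2,6,\ldots,4n-2\}$, while the $2n$ cells of $D_1\cup D_n$, split according to sign, realize the $n$ values $\{1,5,\ldots,4n-3\}$ and the $n$ values $\{3,7,\ldots,4n-1\}$. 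I would produce all of these entries through a scheme of \texttt{diag}-procedures (in the spirit of the procedures $\texttt{A}$--$\texttt{E}$ of Proposition~\ref{prop:3-t=n}, with the off-diagonals split into two progressions indexed by the parity of the row/column), each an arithmetic progression of common difference $\pm 4$ so as to keep its residue class constant; the starting values and lengths are then pinned down by imposing that, in every row and column, the two entries already placed determine the third through the zero-sum condition.

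Once the procedures are written, the verification is routine and I would organize it exactly as in Proposition~\ref{prop:3-t=n}: check that every row sums to $0$ in $\Z$, treating the first row separately and then splitting the remaining rows by parity; do the same for the columns; and finally compute the support of each procedure and check that these sets are pairwise disjoint with union $[1,4n]\setminus\{4,8,\ldots,4n\}$. The genuine difficulty lies in the \emph{design} of the progressions, that is, in calibrating starting values, signs and lengths so that the zero-sum conditions on rows \emph{and} on columns hold simultaneously while the support is covered bijectively and avoids every multiple of $4$. This balancing act is delicate: already for $n=3$ (where the $3\times 3$ array is completely filled) one checks that the naive ``centered'' single progression on the main diagonal fails, so the main diagonal too may need to be split, and the progressions must be chosen with care. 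After that, everything reduces to the elementary arithmetic-progression computations carried out in Proposition~\ref{prop:3-t=n}.
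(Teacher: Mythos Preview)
Your outline follows the paper's own proof: five \texttt{diag} procedures arranged exactly as in Proposition~\ref{prop:3-t=n} (one full main-diagonal progression, two half-progressions on each off-diagonal indexed by parity), residue classes modulo $4$ fixed on each diagonal, then the routine verification of row sums, column sums, and support. (Incidentally the paper places the residue $2\pmod 4$ on $D_n$ rather than on $D_2$, but that is a harmless transposition.) The one substantive divergence is the common difference. You carry over the coincidence ``step size $=$ modulus to avoid'' from Proposition~\ref{prop:3-t=n} (modulus $7$, step $\pm 7$) and propose step $\pm 4$; the paper instead uses step $\pm 8$:
\begin{align*}
\texttt{A}&:\ diag(1,1,-(4n-5),1,8,n), &
\texttt{B}&:\ diag\bigl(1,2,4n-2,2,-8,\tfrac{n+1}{2}\bigr),\\
\texttt{C}&:\ diag\bigl(2,3,-6,2,-8,\tfrac{n-1}{2}\bigr), &
\texttt{D}&:\ diag\bigl(2,1,4n-7,2,-8,\tfrac{n+1}{2}\bigr),\\
\texttt{E}&:\ diag\bigl(3,2,-11,2,-8,\tfrac{n-1}{2}\bigr). & &
\end{align*}
Step $\pm 4$ does not close within this template: once the main diagonal is a single step-$4$ progression, the row and column zero-sum constraints determine the off-diagonal half-progressions up to two free parameters, and already at $n=3$ no admissible choice of those parameters yields the target support (the obstruction you noticed there is intrinsic, not an artefact of the centered choice). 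Doubling to step $8$ pins the residues modulo $8$ rather than merely modulo $4$, spreads the main diagonal's absolute values across the whole of $[1,4n-1]$ instead of bunching them in $[1,2n-1]$, and lets the five supports tile $[1,4n-1]\setminus 4\Z$ exactly. With that single correction your plan becomes the paper's proof verbatim.
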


\begin{proof}
We construct an $n\times n$ array $A$ using the following procedures labeled $\texttt{A}$ to $\texttt{E}$:
$$\begin{array}{ll}
\texttt{A}:\;   diag\left(1,1, -(4n-5) ,  1 ,8, n\right); &
\texttt{B}:\;   diag\left(1,2, 4n-2  ,   2,  -8 , \frac{n+1}{2}\right);\\[3pt]
\texttt{C}:\;   diag\left(2,3, -6, 2, -8 , \frac{n-1}{2}\right); &
\texttt{D}:\;   diag\left(2,1, 4n-7, 2, -8 ,\frac{n+1}{2}\right);\\[3pt]
\texttt{E}:\;   diag\left(3,2, -11, 2, -8 ,\frac{n-1}{2}\right). &
\end{array}$$

We  prove that the array constructed above is an integer  cyclically $3$-diagonal $\H_{2n}(n;3)$.
To aid in the proof we give a schematic picture of where each of the diagonal procedures fills cells (see Figure \ref{fig2}).
Note that each row and each column contain exactly $3$ elements. We now check that the elements in every row sum to zero
(in $\Z$).
\begin{description}
\item[Row $1$] There is the first value of the \texttt{A} diagonal and of the \texttt{B} diagonal and the last of the
\texttt{D} diagonal.
The sum is
$$-(4n-5)+(4n-2)-3=0.$$
\item[Row $2$  to $n$] There are two cases depending on whether the row $r$ is even or odd.
If $r$ is even, then write $r=2i+2$ where $i\in\left[0,\frac{n-3}{2}\right]$. Notice that from the \texttt{D},
\texttt{A} and \texttt{C} diagonal cells we get the following sum:
$$\left(4n-7-8i\right)+\left(-4n+13+16i\right)+(-6-8i)=0.$$
If $r$ is odd, then write $r=2i+3$ where $i\in\left[0,\frac{n-3}{2}\right]$.
From the \texttt{E}, \texttt{A} and \texttt{B} diagonal cells we get the following sum:
$$\left(-11-8i\right)+\left(-4n+21+16i\right)+\left(4n-10-8i\right)=0.$$
\end{description}
So we have shown that all row sums are zero. Next we check that the columns all add to zero.
\begin{description}
\item[Column $1$] There is the first value of the \texttt{A} diagonal and of the \texttt{D} diagonal and the last of the
\texttt{B} diagonal.
The sum is $$-(4n-5)+(4n-7)+2=0.$$
\item[Column $2$  to $n$] There are two cases depending on whether the column $c$ is even or odd.
If $c$ is even, then write $c=2i+2$ where $i\in\left[0,\frac{n-3}{2}\right]$.
Notice that from the \texttt{B}, \texttt{A} and \texttt{E} diagonal cells we get the following sum:
$$\left(4n-2-8i\right)+\left(-4n+13+16i\right)+\left(-11-8i\right)=0.$$
If $c$ is odd, then write $c=2i+3$ where $i\in\left[0,\frac{n-3}{2}\right]$.
From the \texttt{C}, \texttt{A} and \texttt{D} diagonal cells we get the following sum:
$$\left(-6-8i\right)+\left(-4n+21+16i\right)+\left(4n-15-8i\right)=0.$$
\end{description}
So we have shown that each column sums to zero.
Also, it is not hard to see that:
$$supp(\texttt{A})=\left\{1,9,17,\ldots,4n-3\right\}\cup \left\{7,15,23,\ldots,4n-5 \right\}, $$
\begin{align*}
& supp(\texttt{B})=\left\{2,10,18,\ldots,4n-2\right\}, &
supp(\texttt{C})=\left\{6,14,22,\ldots,4n-6\right\},\\
& supp(\texttt{D})=\left\{3 \right\}\cup \left\{5,13,21,\ldots,4n-7\right\}, &
supp(\texttt{E})=\left\{11,19,27,\ldots,4n-1\right\},
\end{align*}
hence $supp(A)=\left[1,4n-1\right]\setminus\left\{4,8,12,\ldots,4n-4\right\}$.
This concludes the proof.
\end{proof}

\begin{ex}
Following the proof of Proposition \ref{prop:3-t=2n} we obtain the
integer $\H_{18}(9;3)$ below.
 \begin{center}
\begin{footnotesize}
$\begin{array}{|r|r|r|r|r|r|r|r|r|}
\hline -31 & 34 &   &  &  &  &  & & -3 \\
\hline  29 & -23 & -6 &  &  &  &  &  &  \\
\hline   & -11 & -15 & 26 &   &  &  & & \\
\hline   &   & 21 & -7 & -14 &  &  &  & \\
\hline   &  &  &   -19 & 1 & 18  & &  &\\
\hline   &  &  &   &   13 & 9 & -22 & &  \\
\hline    &  &  &  &  & -27 & 17 & 10 &   \\
\hline   &  &  &  &  &   & 5 & 25  & -30 \\
\hline   2  &  &   &  &  &  & & -35 & 33    \\
\hline
\end{array}$
\end{footnotesize}
\end{center}
\end{ex}

In the following propositions, since $k>5$, in order to prove that the
relative Heffter array $\H_k(n;k)$ constructed is globally simple we have to
show that the partial sums of each row and of each column are distinct modulo $2nk+k$.
From now on, the sets $\E(\overline{R}_i)$ and $\E(\overline{C}_i)$ are considered ordered with respect to the natural ordering.
Also, by $\S(\overline{R}_i)$ and $\S(\overline{C}_i)$ we will denote the sequence of the partial sums
 of  $\E(\overline{R}_i)$ and $\E(\overline{C}_i)$, respectively.
In order to check that the partial sums are distinct the following remark allows to reduce the computations.

 \begin{rem}\label{rem:partialsum}
Let $A$ be a $\H_t(n;k)$. By the definition of a (relative) Heffter array it easily follows that the $i$-th partial sum $s_i$ of a row (or a column) is
different from the partial sums $s_{i-2}, s_{i-1}, s_{i+1}$ and $s_{i+2}$ of the same row (column).
 \end{rem}

\begin{prop}\label{prop:7}
For every $n\geq 7$ with $n\equiv 3 \pmod 4$ there exists an integer cyclically $7$-diagonal globally simple $\H_{7}(n;7)$.
\end{prop}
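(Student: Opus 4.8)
The plan is to give a direct construction paralleling those of Propositions~\ref{prop:3-t=n} and~\ref{prop:3-t=2n}. Here $v=2nk+t=14n+7=7(2n+1)$ and the relative subgroup $J$ of order $7$ is $(2n+1)\Z_v$, so condition~(b) demands that, up to sign, the entries realise every element of $\Z_v\setminus J$; equivalently the support must be the set $[1,7n+3]\setminus\{2n+1,4n+2,6n+3\}$ consisting of the $7n=nk$ integers in $\left[1,\left\lfloor v/2\right\rfloor\right]$ that are not multiples of $2n+1$. Since each full diagonal of an $n\times n$ array has exactly $n$ cells, seven consecutive diagonals carry exactly $7n$ cells, and I would fill them with explicit procedures $diag(r,c,s,\Delta_1,\Delta_2,\ell)$ as above: one central diagonal together with six flanking diagonals, each of the latter split by parity into two arithmetic progressions of lengths $\tfrac{n+1}{2}$ and $\tfrac{n-1}{2}$ so that consecutive cells along a diagonal may sit in different residue classes modulo $2n+1$. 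The starting values and the common differences $\Delta_2$ are to be tuned so that the entries, read as integers, sweep out $[1,7n+3]$ while skipping exactly $2n+1,4n+2,6n+3$, and a schematic figure in the style of Figure~\ref{fig2} would record which procedure fills which cell.

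With the array fixed, condition~(a) is immediate from the diagonal structure. I would verify~(c) exactly as in the earlier propositions, by splitting the generic rows into the even case $r=2i+2$ and the odd case $r=2i+3$ (and the columns likewise): in each case the seven summands contributed by the diagonals are affine in $i$ and cancel identically, while the few boundary rows and columns produced by the cyclic wrap-around are checked directly. For condition~(b) I would compute the support of each procedure as an explicit arithmetic progression, confirm that these sets are pairwise disjoint, and show that their union equals $[1,7n+3]\setminus\{2n+1,4n+2,6n+3\}$.

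The genuinely new difficulty, absent from the $k=3$ cases, is \emph{global simplicity}: since $k=7>5$ it no longer holds automatically, so the partial sums $s_1,\dots,s_7$ of every row and every column (with $s_7=0$) must be shown pairwise distinct modulo $v=14n+7$. By Remark~\ref{rem:partialsum} it suffices to exclude the coincidences $s_i=s_j$ with $|i-j|\ge 3$, i.e.\ the ten pairs $(s_1,s_4),(s_1,s_5),(s_1,s_6),(s_1,s_7),(s_2,s_5),(s_2,s_6),(s_2,s_7),(s_3,s_6),(s_3,s_7),(s_4,s_7)$ for each line. My strategy would be to show that, evaluated as ordinary integers, the partial sums along any fixed row or column all lie in an interval of length strictly smaller than $v$, so that two of them can be congruent modulo $v$ only if they are already equal in $\Z$, which the closed-form expressions for the $s_i$ preclude; in the borderline cases where two integer partial sums come within $v$ of one another an auxiliary comparison of their residues modulo $2n+1$ finishes the argument. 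I expect controlling these partial sums simultaneously for all line types --- interior even, interior odd, and the handful of wrap-around lines --- to be the main obstacle, after which the constructed array is the required globally simple integer cyclically $7$-diagonal $\H_7(n;7)$.
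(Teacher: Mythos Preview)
Your outline follows the same philosophy as the paper --- a direct diagonal construction with row/column sums checked case by case and the support computed as a union of arithmetic progressions --- but it is only a plan, not a proof, and the template you propose is too simple to match what the paper in fact needs. You envisage one central diagonal and six flanking ones, each of the latter ``split by parity into two arithmetic progressions of lengths $\tfrac{n+1}{2}$ and $\tfrac{n-1}{2}$'', and you say the generic rows fall into just two cases $r=2i+2$ and $r=2i+3$. The paper's construction, however, uses \emph{fourteen} diagonal procedures plus two ad~hoc cells; several of the procedures (the ones labelled \texttt{E} through \texttt{L}) have step $\Delta_1=4$, so the rows and columns must be analysed according to their residue modulo~$4$, not merely modulo~$2$. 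The condition $n\equiv 3\pmod 4$ in the hypothesis is exactly what makes these quarter-length progressions have integer length, and your parity-only scheme gives no role to that hypothesis. Since you have not actually specified the starting values or increments, there is nothing to check, and the phrase ``are to be tuned'' is where the real work lies.

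On global simplicity your interval heuristic is reasonable but optimistic: in the paper's construction some columns have partial sums ranging over an interval of length $14n+6$, one less than $v=14n+7$, so the argument would be extremely tight and would still require the explicit closed-form partial sums you have not produced. The paper simply lists $\E(\overline{R}_i)$ and $\S(\overline{R}_i)$ (and likewise for columns) for each residue class modulo~$4$, invokes Remark~\ref{rem:partialsum} to cut the number of comparisons, and leaves the remaining finitely many affine inequalities to the reader. In short, your approach is the paper's approach, but you have underestimated the granularity (mod~$4$, not mod~$2$; ad~hoc cells; fourteen procedures) and have not supplied the construction itself.
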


\begin{proof}
We construct an $n\times n$ array $A$ using the following procedures labeled $\texttt{A}$ to $\texttt{N}$:
$$\begin{array}{ll}
\texttt{A}:\;   diag\left(3,3,-\frac{n+1}{2},2,-1,\frac{n-1}{2}\right); &
\texttt{B}:\;   diag\left(4,4,1,2,1,\frac{n-3}{2}\right);\\[3pt]
\texttt{C}:\;   diag\left(n-2,n-1,-(5n+3),2,-1,n\right); &
\texttt{D}:\;   diag\left(2,1,-(4n+3),2,-1,n\right);\\[3pt]
\texttt{E}:\;   diag\left(1,3,\frac{7n+3}{4},4,1,\frac{n+1}{4}\right); &
\texttt{F}:\;   diag\left(2,4,\frac{3n+1}{2},4, -1, \frac{n+1}{4}\right);\\[3pt]
\texttt{G}:\;   diag\left(3,5,\frac{11n+7}{4},4,1,\frac{n+1}{4}\right); &
\texttt{H}:\;   diag\left(4,6,\frac{5n+1}{2},4,-1,\frac{n-3}{4}\right);\\[3pt]
\texttt{I}:\;   diag\left(3,1,-\frac{9n+5}{4},4,1,\frac{n+1}{4}\right); &
\texttt{J}:\;   diag\left(4,2,-\frac{5n+3}{2},4,-1,\frac{n+1}{4}\right); \\[3pt]
\texttt{K}:\;   diag\left(5,3,-\frac{5n+1}{4},4,1,\frac{n+1}{4}\right); &
\texttt{L}:\;   diag\left(6,4,-\frac{3n+3}{2},4,-1,\frac{n-3}{4}\right); \\[3pt]
\texttt{M}:\;   diag\left(n-2,1,6n+4,2,1,n\right); &
\texttt{N}:\;   diag\left(2,n-1,3n+2,2,1,n\right). \\[3pt]
\end{array}$$
We also fill the following cells in an \textit{ad hoc} manner:
$$\begin{array}{lcl}
 A\left[1,1\right]=n, & \quad  & A\left[2,2\right]=-\frac{n-1}{2}.
  \end{array}$$

We  prove that the array constructed above is an integer  cyclically $7$-diagonal globally simple $\H_7(n;7)$.
To aid in the proof we give a schematic picture of where each of the diagonal procedures fills cells (see Figure \ref{fig3}).
We have placed an $\texttt{X}$ in the \textit{ad hoc} cells.
Note that each row and each column contains exactly $7$ elements. We now list the elements
and the partial sums of each row.
We leave to the reader the direct check that the partial sums are distinct modulo $14n+7$;
for a quicker check keep in mind Remark \ref{rem:partialsum}.

\begin{figure}
\begin{footnotesize}
 \begin{center}
\begin{tabular}{|c|c|c|c|c|c|c|c|c|c|c|}
  \hline
 \texttt{X} & \texttt{C} & \texttt{E} & \texttt{M} & & & & & \texttt{N} & \texttt{J} & \texttt{D}\\ \hline
 \texttt{D} & \texttt{X} & \texttt{C} & \texttt{F} & \texttt{M} & & & & & \texttt{N} & \texttt{K} \\ \hline
 \texttt{I} &\texttt{D} & \texttt{A} & \texttt{C} & \texttt{G} & \texttt{M} & & & & & \texttt{N}  \\ \hline
 \texttt{N} & \texttt{J} &\texttt{D} & \texttt{B} & \texttt{C}  & \texttt{H} & \texttt{M} & & & &   \\ \hline
 & \texttt{N} & \texttt{K} &\texttt{D} & \texttt{A} & \texttt{C} & \texttt{E} & \texttt{M} & & &    \\ \hline
 & & \texttt{N} & \texttt{L} &\texttt{D} & \texttt{B} & \texttt{C} & \texttt{F} & \texttt{M} & &    \\ \hline
 & & & \texttt{N} & \texttt{I} &\texttt{D} & \texttt{A} & \texttt{C} & \texttt{G} & \texttt{M} &     \\ \hline
 & & & & \texttt{N} & \texttt{J} &\texttt{D} & \texttt{B} & \texttt{C} & \texttt{H} &  \texttt{M}  \\ \hline
 \texttt{M} & & & & & \texttt{N} & \texttt{K} &\texttt{D} & \texttt{A} & \texttt{C} & \texttt{E}   \\ \hline
 \texttt{F}& \texttt{M} & & & & & \texttt{N} & \texttt{L} &\texttt{D} & \texttt{B} & \texttt{C}    \\ \hline
\texttt{C} & \texttt{G} & \texttt{M} & & & & & \texttt{N} & \texttt{I} & \texttt{D} & \texttt{A}     \\ \hline
\end{tabular}
\end{center}
 \end{footnotesize}
\caption{Scheme of construction with $n=11$.}\label{fig3}
\end{figure}

\begin{description}
\item[Row $1$] There is an ad hoc element, the $(\frac{n+5}{2})^{th}$  value of the \texttt{C} diagonal,
the first one of the \texttt{E} diagonal, the $(\frac{n+5}{2})^{th}$  value of the \texttt{M} diagonal,
 the $(\frac{n+1}{2})^{th}$  value of the \texttt{N} diagonal, the last value of the \texttt{J} diagonal
 and the $(\frac{n+1}{2})^{th}$  value of the \texttt{D} diagonal.
Namely,
$$\E(\overline{R}_1)=\left(n,-\frac{11n+9}{2},\frac{7n+3}{4},\frac{13n+11}{2},\frac{7n+3}{2},-\frac{11n+3}{4},-\frac{9n+5}{2}\right)$$
and
$$\S(\overline{R}_1)=\left( n, -\frac{9n+9}{2},-\frac{11n+15}{4}, \frac{15n+7}{4},  \frac{29n+13}{4}, \frac{9n+5}{2},
0 \right).$$

\item[Row $2$] There is the first  value of the \texttt{D} diagonal, an ad hoc element, the third
value of the \texttt{C} diagonal, the first  value of the \texttt{F} diagonal,
 the third  value of the \texttt{M} diagonal, the first value of the \texttt{N} diagonal
 and the last  value of the \texttt{K} diagonal.
Hence
$$\E(\overline{R}_2)=\left(-(4n+3),-\frac{n-1}{2},-(5n+5),\frac{3n+1}{2},6n+6,3n+2,-(n+1)\right) $$
and
$$\S(\overline{R}_2)=\left( -(4n+3), -\frac{9n+5}{2},  -\frac{19n+15}{2}, -(8n+7), -(2n+1), n+1, 0 \right).$$

\item[Row $3$  to $n$] There are four cases depending on the congruence class of $r$ modulo 4.
If $r\equiv 3\pmod 4$, then write $r=4i+3$ where $i\in\left[0,\frac{n-3}{4}\right]$.
It is not hard to see that from the \texttt{N},
\texttt{I}, \texttt{D}, \texttt{A}, \texttt{C}, \texttt{G} and \texttt{M}  diagonal cells we get:
$$\E(\overline{R}_{4i+3})=  \left(\frac{7n+5}{2}+2i, -\frac{9n+5}{4}+i, -\frac{9n+7}{2}-2i, -\frac{n+1}{2}-2i, \right.$$
$$\left. -\frac{11n+11}{2}-2i+\varepsilon,\frac{11n+7}{4}+i, \frac{13n+13}{2}+2i-\varepsilon  \right),$$
where $\varepsilon=0$ for $i\in\left[0,\frac{n-7}{4}\right]$ while $\varepsilon=n$ for $i=\frac{n-3}{4}$,
and
$$\S(\overline{R}_{4i+3})=\left(\frac{7n+5}{2}+2i,\frac{5n+5}{4}+3i,-\frac{13n+9}{4}+i,-\frac{15n+11}{4}-i,\right.$$
$$\left.-\frac{37n+33}{4}-3i+\varepsilon,-\frac{13n+13}{2}-2i+\varepsilon,0 \right). $$
If $r\equiv 0\pmod 4$, then write $r=4i+4$ where $i\in\left[0,\frac{n-7}{4}\right]$.
It is not hard to see that from the \texttt{N},
\texttt{J}, \texttt{D}, \texttt{B}, \texttt{C}, \texttt{H} and \texttt{M}  diagonal cells we get:
$$\E(\overline{R}_{4i+4})=  \left(3n+3+2i,-\frac{5n+3}{2}-i,-(4n+4+2i), 1+2i, -(5n+6+2i), \right.$$
$$\left. \frac{5n+1}{2}-i, 6n+7+2i \right)$$
and
$$\S(\overline{R}_{4i+4})=\left( 3n+3+2i, \frac{n+3}{2}+i,-\frac{7n+5}{2}-i,-\frac{7n+3}{2}+i,-\frac{17n+15}{2}-i,\right.$$
$$\left. -(6n+7+2i),0\right).$$
If $r\equiv 1\pmod 4$, then write $r=4i+5$ where $i\in\left[0,\frac{n-7}{4}\right]$.
It is not hard to see that from the \texttt{N},
\texttt{K}, \texttt{D}, \texttt{A}, \texttt{C}, \texttt{E} and \texttt{M}  diagonal cells we get:
$$\E(\overline{R}_{4i+5})= \left(\frac{7n+7}{2}+2i, - \frac{5n+1}{4}+i, -\frac{9n+9}{2}-2i, -\frac{n+3}{2}-2i, \right.$$
$$\left. -\frac{11n+13}{2}-2i+\varepsilon, \frac{7n+7}{4}+i, \frac{13n+15}{2}+2i-\varepsilon \right),$$
where $\varepsilon=0$ for $i\in\left[0,\frac{n-11}{4}\right]$ while $\varepsilon=n$ for $i=\frac{n-7}{4}$,
and
$$\S(\overline{R}_{4i+5})=\left(\frac{7n+7}{2}+2i,\frac{9n+13}{4}+3i,-\frac{9n+5}{4}+i, -\frac{11n+11}{4}-i,\right.$$
$$\left.  -\frac{33n+37}{4}-3i+\varepsilon, -\frac{13n+15}{2}-2i+\varepsilon,0 \right).$$
If $r\equiv 2\pmod 4$, then write $r=4i+6$ where $i\in\left[0,\frac{n-7}{4}\right]$.
It is not hard to see that from the \texttt{N},
\texttt{L}, \texttt{D}, \texttt{B}, \texttt{C}, \texttt{F} and \texttt{M}  diagonal cells we get:
$$\E(\overline{R}_{4i+6})=\left(3n+4+2i,-\frac{3n+3}{2}-i,-(4n+5+2i),2+2i,-(5n+7+2i),\right. $$
$$\left. \frac{3n-1}{2}-i, 6n+8+2i \right)$$
and
$$\S(\overline{R}_{4i+6})=\left(3n+4+2i, \frac{3n+5}{2}+i,-\frac{5n+5}{2}-i,-\frac{5n+1}{2}+i,-\frac{15n+15}{2}-i,
\right.$$
$$\left. -(6n+8+2i),0 \right).$$
\end{description}
Now we list the elements and the partial sums of the columns.
\begin{description}
\item[Column $1$]  There is an ad hoc element, the first  value of the \texttt{D} diagonal and of the \texttt{I} diagonal,
the second  value of the \texttt{N} diagonal,
 the first  value of the \texttt{M} diagonal, the last value of the \texttt{F} diagonal
 and the second  value of the \texttt{C} diagonal.
Namely,
$$\E(\overline{C}_1)=\left(n,-(4n+3),-\frac{9n+5}{4},3n+3,6n+4,\frac{5n+5}{4},-(5n+4)\right)$$
and
$$\S(\overline{C}_1)=\left(n, -(3n+3),-\frac{21n+17}{4},-\frac{9n+5}{4},\frac{15n+11}{4}, 5n+4, 0\right).$$
\item[Column $2$]  There is the $(\frac{n+5}{2})^{th}$  value of the \texttt{C} diagonal,
an ad hoc element, the  $(\frac{n+3}{2})^{th}$  value of the \texttt{D} diagonal,
the first  value of the \texttt{J} diagonal,
 the $(\frac{n+5}{2})^{th}$  value of the \texttt{N} diagonal and of the \texttt{M} diagonal
 and the last value of the \texttt{G} diagonal.
Namely,
$$\E(\overline{C}_2)=\left(-\frac{11n+9}{2},-\frac{n-1}{2},-\frac{9n+7}{2},-\frac{5n+3}{2},\frac{7n+7}{2},\frac{13n+9}{2},3n+1\right)$$
and
$$\S(\overline{C}_2)=\left(-\frac{11n+9}{2},-(6n+4),-\frac{21n+15}{2},-(13n+9),-\frac{19n+11}{2},-(3n+1), 0\right).$$
\item[Column $3$  to $n$] There are four cases depending on the congruence class of $c$ modulo 4.
If $c\equiv 3\pmod 4$, then write $c=4i+3$ where $i\in\left[0,\frac{n-3}{4}\right]$.
It is not hard to see that from the \texttt{M},
\texttt{E}, \texttt{C}, \texttt{A}, \texttt{D}, \texttt{K} and \texttt{N}  diagonal cells we get:
$$\E(\overline{C}_{4i+3})=\left(6n+5+2i, \frac{7n+3}{4}+i, -(5n+5+2i), -\frac{n+1}{2}-2i,-(4n+4+2i), \right.$$
$$\left.-\frac{5n+1}{4}+i, 3n+4+2i\right)$$
and
$$\S(\overline{C}_{4i+3})=\left(6n+5+2i, \frac{31n+23}{4}+3i,\frac{11n+3}{4}+i,\frac{9n+1}{4}-i, -\frac{7n+15}{4}-3i,\right.$$
$$\left.-(3n+4+2i),0 \right).$$
If $c\equiv 0\pmod 4$, then write $c=4i+4$ where $i\in\left[0,\frac{n-7}{4}\right]$.
It is not hard to see that from the \texttt{M},
\texttt{F}, \texttt{C}, \texttt{B}, \texttt{D}, \texttt{L} and \texttt{N}  diagonal cells we get:
$$\E(\overline{C}_{4i+4})=\left(\frac{13n+11}{2}+2i, \frac{3n+1}{2}-i, -\frac{11n+11}{2}-2i, 1+2i, -\frac{9n+9}{2}-2i, \right.$$
$$\left.  -\frac{3n+3}{2}-i, \frac{7n+9}{2}+2i\right)$$
and
$$\S(\overline{C}_{4i+4})=\left(\frac{13n+11}{2}+2i, 8n+6+i, \frac{5n+1}{2}-i,\frac{5n+3}{2}+i, -(2n+3+i), \right.$$
$$\left. -\frac{7n+9}{2}-2i,0 \right).$$
If $c\equiv 1\pmod 4$, then write $c=4i+5$ where $i\in\left[0,\frac{n-7}{4}\right]$.
It is not hard to see that from the \texttt{M},
\texttt{G}, \texttt{C}, \texttt{A}, \texttt{D}, \texttt{I} and \texttt{N}  diagonal cells we get:
$$\E(\overline{C}_{4i+5})=\left(6n+6+2i, \frac{11n+7}{4}+i, -(5n+6+2i), -\frac{n+3}{2}-2i, -(4n+5+2i),\right.$$
$$\left. -\frac{9n+1}{4}+i, 3n+5+2i\right)$$
and
$$\S(\overline{C}_{4i+5})=\left(6n+6+2i, \frac{35n+31}{4}+3i,\frac{15n+7}{4}+i,\frac{13n+1}{4}-i,-\frac{3n+19}{4}-3i, \right.$$
$$\left. -(3n+5+2i),0\right).$$
If $c\equiv 2\pmod 4$, then write $c=4i+6$ where $i\in\left[0,\frac{n-7}{4}\right]$.
It is not hard to see that from the \texttt{M},
\texttt{H}, \texttt{C}, \texttt{B}, \texttt{D}, \texttt{J} and \texttt{N}  diagonal cells we get:
$$\E(\overline{C}_{4i+6})=\left(\frac{13n+13}{2}+2i, \frac{5n+1}{2}-i, -\frac{11n+13}{2}-2i+\varepsilon, 2+2i, \right.$$
$$\left. -\frac{9n+11}{2}-2i, -\frac{5n+5}{2}-i, \frac{7n+11}{2}+2i-\varepsilon \right),$$
where $\varepsilon=0$ for $i\in\left[0,\frac{n-11}{4}\right]$ while $\varepsilon=n$ for $i=\frac{n-7}{4}$,
and
$$\S(\overline{C}_{4i+6})=\left(\frac{13n+13}{2}+2i, 9n+7+i, \frac{7n+1}{2}-i+\varepsilon,
\frac{7n+5}{2}+i+\varepsilon, -(n+3+i)+\varepsilon,\right.$$
$$\left. -\frac{7n+11}{2}-2i+\varepsilon,0\right).$$
Finally we consider the support of $A$:
$$\begin{array}{rcl}
supp(A)& = & \left[1,\frac{n-3}{2}\right]_{(\texttt{B})}\cup \{ \frac{n-1}{2}\} \cup
\left[\frac{n+1}{2},n-1\right]_{(\texttt{A})} \cup\{ n\} \cup
\left[n+1,\frac{5n+1}{4}\right]_{(\texttt{K})}\cup \\[3pt]
&&\left[\frac{5n+5}{4},\frac{3n+1}{2}\right]_{(\texttt{F})}
 \cup \left[\frac{3n+3}{2},\frac{7n-1}{4}\right]_{(\texttt{L})} \cup
 \left[\frac{7n+3}{4},2n\right]_{(\texttt{E})} \cup
 \left[2n+2,\frac{9n+5}{4}\right]_{(\texttt{I})}\\[3pt]
&& \cup  \left[\frac{9n+9}{4},\frac{5n+1}{2}\right]_{(\texttt{H})}
 \cup \left[\frac{5n+3}{2},\frac{11n+3}{4}\right]_{(\texttt{J})}\cup
 \left[\frac{11n+7}{4},3n+1\right]_{(\texttt{G})}\cup  \\[3pt]
&& \left[3n+2,4n+1\right]_{(\texttt{N})}\cup \left[4n+3,5n+2\right]_{(\texttt{D})}
\cup \left[5n+3,6n+2\right]_{(\texttt{C})} \\[3pt]
&&\cup \left[6n+4,7n+3\right]_{(\texttt{M})} =\\[3pt]
&= & [1,7n+3]\setminus \{2n+1,4n+2,6n+3\}.
\end{array}$$
\end{description}
This concludes the proof.
\end{proof}

\begin{ex}
Following the proof of Proposition \ref{prop:7} we obtain the
integer globally simple $\H_7(11;7)$ below.
 \begin{center}
\begin{footnotesize}
$\begin{array}{|r|r|r|r|r|r|r|r|r|r|r|}
\hline 11 & -65 & 20  & 77 &  &  &  & & 40 & -31 & -52  \\
\hline  -47 & -5 & -60 & 17 & 72 &   &  &   & & 35 & -12\\
\hline  -26 & -53 & -6 & -66 & 32 & 78 &  &  & & & 41\\
\hline  36 & -29 & -48 & 1 & -61  & 28 & 73 & & & & \\
\hline   &  42 & -14 & -54 & -7 & -67 & 21 & 79 & & &\\
\hline   &  &  37 &   -18 & -49 & 2 & -62 & 16 & 74 & &\\
\hline   &  &  &  43 & -25 & -55 & -8 & -68 & 33 & 80 & \\
\hline    &  &  &  & 38 & -30 & -50 & 3 & -63 & 27 & 75\\
\hline  70 &  &  &  &  &  44 & -13 & -56  & -9 & -58 & 22\\
\hline  15 & 76 &  &   &  &  & 39 & -19 & -51 & 4 & -64  \\
\hline  -59  & 34 & 71 &  &  &  &  & 45 & -24 & -57 & -10 \\
\hline
\end{array}$
\end{footnotesize}
\end{center}
\end{ex}

\begin{prop}\label{prop:9}
For every $n\geq 11$ with $n\equiv 3 \pmod 4$ there exists an integer $9$-diagonal globally simple $\H_{9}(n;9)$ with
width $\frac{n-9}{2}$.
\end{prop}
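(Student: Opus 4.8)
The plan is to follow the template of Proposition \ref{prop:7}: write an explicit $n\times n$ array $A$ as a union of diagonal chains produced by the $diag$ procedure, together with a few \emph{ad hoc} cells near the corners, and then check one by one that $A$ is an integer globally simple $\H_9(n;9)$. Here $v=2nk+k=9(2n+1)$ and the subgroup $J$ of order $9$ is $(2n+1)\Z_{18n+9}$, so the support must be $[1,9n+4]\setminus\{2n+1,4n+2,6n+3,8n+4\}$, i.e.\ the positive integers up to $\lfloor v/2\rfloor=9n+4$ that are not multiples of $2n+1$. The prescribed width fixes the skeleton: since the $n-9$ empty diagonals are split into strips all of width $\frac{n-9}{2}$, there are exactly two empty strips, and hence the nine filled diagonals fall into two consecutive blocks. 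A natural choice is a block of five diagonals straddling the main diagonal $D_1$ (namely $D_{n-1},D_n,D_1,D_2,D_3$) and a separate block of four, the two blocks being separated by the two strips; within each diagonal I would use chains with step $\Delta_1\in\{2,4\}$, and I would tune the entry-step $\Delta_2$ and the starting values so that the entries avoid all multiples of $2n+1$.

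First I would read off from the scheme picture (the analogue of Figure \ref{fig3}) that each row and each column meets every one of the nine diagonals exactly once, so each contains exactly $9$ filled cells. Next I would verify the row sums: the rows $1,2$ together with the few rows meeting the ends of the second block are treated as \emph{ad hoc}, while the remaining rows split into four families according to $r\bmod 4$ (the increments being organised in groups of four consecutive diagonals, as in Proposition \ref{prop:7}); in each family the contributing chain entries telescope to $0$ in $\Z$. The column sums are checked identically with $c$ in place of $r$. All of this is routine algebra once the parameters are fixed.

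Then I would compute $supp(A)$ by writing the support of each chain $\texttt{A},\texttt{B},\dots$ as an arithmetic progression and checking that the disjoint union is exactly $[1,9n+4]$ with the four multiples $2n+1,4n+2,6n+3,8n+4$ deleted; this confirms condition (b) of Definition \ref{def:RelativeH} and the integer property simultaneously. Finally, for global simplicity I would list $\S(\overline{R}_i)$ and $\S(\overline{C}_j)$ for each congruence class and for each \emph{ad hoc} line, and invoke Remark \ref{rem:partialsum}, which already excludes the coincidences $s_i=s_{i\pm1},s_{i\pm2}$; only the non-adjacent pairs among the nine partial sums of each line then remain to be separated modulo $18n+9$.

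The main obstacle is exactly this last step. With $k=9$ there are nine partial sums per line and four residue classes, plus several exceptional lines coming from the two blocks and the strip boundaries, so the number of non-adjacent pairs to be distinguished modulo $18n+9$ is markedly larger than in the $k=7$ case; moreover the gap between the two blocks means the chain indices no longer follow the simple consecutive-diagonal bookkeeping of Proposition \ref{prop:7}, so locating which chains contribute to a given line is itself delicate. I expect to fix the free parameters by first imposing the support and zero-sum conditions, and only then checking, class by class, that no two non-adjacent partial sums collide, adjusting the \emph{ad hoc} corner entries if a collision shows up.
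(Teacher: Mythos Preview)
Your overall plan mirrors the paper's approach: an explicit diagonal construction together with a handful of \emph{ad hoc} cells, followed by row/column sum checks, a support computation, and a partial-sum verification for global simplicity. Two points, however, deserve correction.

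First, your skeleton differs from the paper's. You argue (correctly) that the width $\frac{n-9}{2}$ forces exactly two empty strips, hence two blocks of filled diagonals, and you then choose a $5+4$ split $D_{n-1},D_n,D_1,D_2,D_3$ plus a block of four. The paper instead uses a $7+2$ split: the filled diagonals are $D_1,D_2,D_3,D_4,D_{\frac{n+1}{2}},D_{\frac{n+3}{2}},D_{n-2},D_{n-1},D_n$, i.e.\ seven cyclically consecutive diagonals $D_{n-2},\ldots,D_4$ together with the isolated pair $D_{\frac{n+1}{2}},D_{\frac{n+3}{2}}$. The width condition does not decide between these patterns, so your inference that the skeleton is ``fixed'' is too strong; and the paper's entire verification (eighteen $diag$ procedures $\texttt{A}$ through $\texttt{R}$ plus twelve ad hoc cells, with rows and columns split into cases according to parity rather than residue mod~$4$) is built around the $7+2$ layout. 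There is no a priori guarantee that suitable parameters exist for a $5+4$ layout.

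Second, and more fundamentally, your proposal does not actually supply a construction: you say you ``would tune'' the starting values and steps and ``expect to fix the free parameters'', but the entire content of this proof is precisely the exhibition of those parameters followed by the case-by-case verification. In the paper the bulk of the argument is the explicit list of $diag$ calls and the line-by-line enumeration of $\E(\overline{R}_i)$, $\S(\overline{R}_i)$, $\E(\overline{C}_j)$, $\S(\overline{C}_j)$ for each row and column type. Without concrete numbers your write-up is a template rather than a proof; and since the part you yourself identify as the main obstacle --- separating the non-adjacent partial sums modulo $18n+9$ --- is deferred to a hoped-for later adjustment, the genuine work is still missing.
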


\begin{proof}
We construct an $n\times n$ array $A$ using the following procedures labeled
$\texttt{A}$ to
$\texttt{R}$:
$$\begin{array}{ll}
\texttt{A}:\;  diag(3, 1,   5n+3, 1, 1, n); &
\texttt{B}:\;  diag(4, 1,-(6n+4), 1, -1, n);\\
\texttt{C}:\;  diag(3, 6,-(7n+4), 1, -1, n); &
\texttt{D}:\;  diag(4, 6,   8n+5, 1, 1,n);\\
\texttt{E}:\;  diag\left(1,\frac{n+3}{2}, -(2n), 1, 2,  \frac{n-1}{2}\right); &
\texttt{F}:\;  diag\left(\frac{n+3}{2}, 1, 2n+2, 1, 2,  \frac{n-1}{2}\right);  \\
\texttt{G}:\;  diag\left(2,2,-(n-2), 1, 1 , \frac{n-3}{2}\right); &
\texttt{H}:\;  diag\left(\frac{n+3}{2}, 2, -(2n+3), 1, -2 , \frac{n-3}{2}\right);\\
\texttt{I}:\;  diag\left(2,\frac{n+3}{2}, 2n-1, 1, -2,  \frac{n-3}{2}\right); &
\texttt{J}:\;  diag\left(\frac{n+3}{2},\frac{n+3}{2}, \frac{n-3}{2}, 1, -1,   \frac{n-5}{2}\right); \\
\texttt{K}:\;  diag\left(2,1,-(3n+4), 2, -1,\frac{n+1}{4} \right); &
\texttt{L}:\;  diag\left(1,2, 5n,  2, -1,\frac{n+1}{4} \right); \\
\texttt{M}:\;  diag\left(3,2,-(4n+3), 2, -1 ,\frac{n-3}{4} \right); &
\texttt{N}:\;  diag\left(2,3, 4n+1, 2, -1   ,\frac{n-3}{4} \right); \\
\texttt{O}:\;  diag\left(\frac{n+1}{2},\frac{n+3}{2}, \frac{17n+9}{4}, 2, 1   ,\frac{n-3}{4} \right); &
\texttt{P}:\;  diag\left(\frac{n+3}{2},\frac{n+1}{2},-\frac{15n+7}{4}, 2, 1   ,\frac{n-3}{4} \right); \\
\texttt{Q}:\;  diag\left(\frac{n+3}{2},\frac{n+5}{2},\frac{13n+17}{4}, 2,1   ,\frac{n-3}{4} \right); &
\texttt{R}:\;  diag\left(\frac{n+5}{2},\frac{n+3}{2}, -\frac{19n-1}{4}, 2, 1   ,\frac{n-3}{4} \right).
\end{array}$$
We also fill the following cells in an \textit{ad hoc} manner:
$$\begin{array}{lclcl}
A[1,1]=n-1, & &     A[1,\frac{n+1}{2}]=n+2,  &  &   A[1,n]=-(5n+1),\\
A[\frac{n+1}{2},1]=-(3n), & & A[\frac{n+1}{2},\frac{n+1}{2}]=n, & &  A[\frac{n+1}{2},n]=n+1, \\
A[n-1,n-1]=-\frac{n-1}{2}, & & A[n-1,n]=5n+2, & &  A[n,1]=3n+3,\\
A[n,\frac{n+1}{2}]=-(3n+1), & &   A[n,n-1]=-(3n+2), & & A[n,n]=1.\\
\end{array}$$

We prove that the array constructed above is an integer  $9$-diagonal globally simple $\H_9(n;9)$ with width
$\frac{n-9}{2}$.
To aid in the proof we give a schematic picture of where each of the diagonal procedures fills cells (see Figure \ref{fig9}).
We have placed an $\texttt{X}$ in the \textit{ad hoc} cells.
Note that each row and each column contains exactly $9$ elements.
Since the filled diagonals are $D_1,D_2,D_3,D_4,D_{\frac{n+1}{2}}, D_{\frac{n+3}{2}}, D_{n-2}, D_{n-1}$ and $D_n$, $A$
has two empty strips of size $\frac{n-9}{2}$.
We now list the elements and the partial sums of every row.
We leave to the reader the direct check that the partial sums are distinct modulo $18n+9$;
for a quicker check keep in mind Remark \ref{rem:partialsum}.

\begin{figure}
\begin{footnotesize}
$$\begin{array}{|r|r|r|r|r|r|r|r|r|r|r|r|r|r|r|}\hline
   \texttt{X}&  \texttt{L}&  \texttt{D}&  \texttt{C}&  &  &  &  \texttt{X}&  \texttt{E}&  &   &  &  \texttt{B}&
\texttt{A}&  \texttt{X} \\\hline
    \texttt{K}&  \texttt{G}&  \texttt{N}&  \texttt{D}&  \texttt{C}&  &  &  &  \texttt{I}&  \texttt{E}&          &  &  &
\texttt{B}&  \texttt{A} \\\hline
    \texttt{A}&  \texttt{M}&  \texttt{G}&  \texttt{L}&  \texttt{D}&  \texttt{C}&  &  &  &  \texttt{I}&  \texttt{E}&  &
&  &  \texttt{B} \\\hline
    \texttt{B}&  \texttt{A}&  \texttt{K}&  \texttt{G}&  \texttt{N}&  \texttt{D}&  \texttt{C}&  &  &  &  \texttt{I}&
\texttt{E}&  &  &            \\\hline
    &  \texttt{B}&  \texttt{A}&  \texttt{M}&  \texttt{G}&  \texttt{L}&  \texttt{D}&  \texttt{C}&  &  &     &
\texttt{I}&  \texttt{E}&  &            \\\hline
    &  &  \texttt{B}&  \texttt{A}&  \texttt{K}&  \texttt{G}&  \texttt{N}&  \texttt{D}&  \texttt{C}&  &          &  &
\texttt{I}&  \texttt{E}&            \\\hline
    &  &  &  \texttt{B}&  \texttt{A}&  \texttt{M}&  \texttt{G}&  \texttt{L}&  \texttt{D}&  \texttt{C}&          &  &  &
\texttt{I}&  \texttt{E} \\\hline
    \texttt{X}&  &  &  &  \texttt{B}&  \texttt{A}&  \texttt{K}&  \texttt{X}&  \texttt{O}&  \texttt{D}&  \texttt{C}&  &
&  &  \texttt{X} \\\hline
    \texttt{F}&  \texttt{H}&  &  &  &  \texttt{B}&  \texttt{A}&  \texttt{P}&  \texttt{J}&  \texttt{Q}&  \texttt{D}&
\texttt{C}&  &  &            \\\hline
    &  \texttt{F}&  \texttt{H}&  &  &  &  \texttt{B}&  \texttt{A}&  \texttt{R}&  \texttt{J}&  \texttt{O}&  \texttt{D}&
\texttt{C}&  &            \\\hline
    &  &  \texttt{F}&  \texttt{H}&  &  &  &  \texttt{B}&  \texttt{A}&  \texttt{P}&  \texttt{J}&  \texttt{Q}&
\texttt{D}&  \texttt{C}&            \\\hline
    &  &  &  \texttt{F}&  \texttt{H}&  &  &  &  \texttt{B}&  \texttt{A}&  \texttt{R}&  \texttt{J}&  \texttt{O}&
\texttt{D}&  \texttt{C} \\\hline
    \texttt{C}&  &  &  &  \texttt{F}&  \texttt{H}&  &  &  &  \texttt{B}&  \texttt{A}&  \texttt{P}&  \texttt{J}&
\texttt{Q}&  \texttt{D} \\\hline
    \texttt{D}&  \texttt{C}&  &  &  &  \texttt{F}&  \texttt{H}&  &  &  &  \texttt{B}&  \texttt{A}&  \texttt{R}&
\texttt{X}&  \texttt{X} \\\hline
    \texttt{X}&  \texttt{D}&  \texttt{C}&  &  &  &  \texttt{F}&  \texttt{X}&  &  &      &  \texttt{B}&  \texttt{A}&
\texttt{X}&  \texttt{X} \\\hline
\end{array}$$
\end{footnotesize}

\caption{Scheme of construction with $n=15$.}\label{fig9}
\end{figure}

\begin{description}
\item [Row $1$] There are three ad hoc values plus the elements of the $\texttt{L}$,
$\texttt{D}$, $\texttt{C}$, $\texttt{E}$, $\texttt{B}$ and $\texttt{A}$ diagonals. Namely:
$$\E(\overline{R}_1)=( n-1, 5n, 9n+2, -(8n+2) , n+2, -2n , -(7n+1), 6n+1, -(5n+1) )$$
and
$$\S(\overline{R}_1)=( n-1, 6n-1, 15n+1, 7n-1, 8n+1, 6n+1, -n, 5n+1, 0 ).$$

\item [Row $2$] It is not hard to see that from the
 $\texttt{K}$,
$\texttt{G}$, $\texttt{N}$, $\texttt{D}$, $\texttt{C}$, $\texttt{I}$, $\texttt{E}$, $\texttt{B}$ and $\texttt{A}$
diagonal cells we get:
$$\E(\overline{R}_2)=( -(3n+4), -(n-2), 4n+1, 9n+3, -(8n+3) , 2n-1, -(2n-2) , -(7n+2), 6n+2 )$$
and
$$\S(\overline{R}_2)=( -(3n+4), -(4n+2), -1, 9n+2, n-1, 3n-2, n, -(6n+2), 0 ).$$

\item [Row $3$] It is not hard to see that from the
 $\texttt{A}$,
$\texttt{M}$, $\texttt{G}$, $\texttt{L}$, $\texttt{D}$, $\texttt{C}$, $\texttt{I}$, $\texttt{E}$ and $\texttt{B}$
diagonal cells we get:
$$\E(\overline{R}_3)=( 5n+3, -(4n+3), -(n-3), 5n-1, 9n+4, -(7n+4), 2n-3, -(2n-4) , -(7n+3) )$$
and
$$\S(\overline{R}_3)=(  5n+3, n, 3, 5n+2, 14n+6, 7n+2, 9n-1, 7n+3, 0 ).$$

\item [Row $4$ to $\frac{n-1}{2}$] We have to distinguish two cases, depending on the parity of the row $r$.
If $r$ is even, then write $r=4+2i$ where $ i\in \left[0,\frac{n-11}{4}\right]$. It is not hard to see that from the
 $\texttt{B}$,
$\texttt{A}$, $\texttt{K}$, $\texttt{G}$, $\texttt{N}$, $\texttt{D}$, $\texttt{C}$, $\texttt{I}$ and $\texttt{E}$
diagonal cells we get:
$$\E(\overline{R}_{4+2i})=\left(
 -(6n+4+2i), 5n+4+2i, -(3n+5+i), -(n-4-2i), 4n-i, 8n+5+2i,\right.$$
 $$ \left.  -(7n+5+2i), 2n-5-4i, -(2n-6-4i) \right)$$
 and
 $$\S(\overline{R}_{4+2i})=( -(6n+4+2i), -n, -(4n+5+i), -(5n+1-i), -(n+1), 7n+4+2i, -1, 2n-6-4i, 0 ). $$
If $r$ is odd, then write $r=5+2i$, where $i\in \left[0,\frac{n-11}{4}\right]$.
It is not hard to see that from the
 $\texttt{B}$,
$\texttt{A}$, $\texttt{M}$, $\texttt{G}$, $\texttt{L}$, $\texttt{D}$, $\texttt{C}$, $\texttt{I}$ and $\texttt{E}$
diagonal cells we get:
$$\E(\overline{R}_{5+2i})=\left(
 -(6n+5+2i), 5n+5+ 2i, -(4n+4+i), -(n-5-2i), 5n-2-i,\right.$$
$$ \left.  8n+6+2i, -(7n+6+2i)
, 2n-7-4i, -(2n-8-4i) \right)$$
and
$$\S(\overline{R}_{5+2i})=(-(6n+5+2i), -n, -(5n+4+i), -(6n-1-i), -(n+1), 7n+5+2i, -1, 2n-8-4i, 0).$$

\item [Row $\frac{n+1}{2}$] There are three ad hoc values plus the elements of the $\texttt{B}$,
$\texttt{A}$, $\texttt{K}$, $\texttt{O}$, $\texttt{D}$ and $\texttt{C}$ diagonals. Namely:
$$\E\left(\overline{R}_{\frac{n+1}{2}}\right)=\left( -3n , -\frac{13n+1}{2}, \frac{11n+1}{2},
-\frac{13n+13}{4}, n, \frac{17n+9}{4}, \frac{17n+3}{2},  -\frac{15n+3}{2} , n+1 \right)$$
and
$$\S\left(\overline{R}_{\frac{n+1}{2}}\right)=\left( -3n, -\frac{19n+1}{2}, -4n, -\frac{29n+13}{4},
-\frac{25n+13}{4}, -(2n+1), \frac{13n+1}{2}, \right.$$
$$\left. -(n+1), 0 \right).$$

\item [Row $\frac{n+3}{2}$ to $n-2$] We have to distinguish two cases, depending on the parity of the row $r$.
If $r$ is odd, then write $r=\frac{n+3}{2}+2i$ where  $i\in \left[0,\frac{n-7}{4}\right]$.
It is not hard to see that from the
 $\texttt{F}$,
$\texttt{H}$, $\texttt{B}$, $\texttt{A}$, $\texttt{P}$, $\texttt{J}$, $\texttt{Q}$, $\texttt{D}$ and $\texttt{C}$
diagonal cells we get:
$$\E\left(\overline{R}_{\frac{n+3}{2}+2i}\right)=\left(
 2n+2+4i, -(2n+3+4i), -\frac{13n+3}{2}-2i, \frac{11n+3}{2}+2i, -\frac{15n+7}{4}+i, \right.$$
$$\left.  \frac{n-3}{2}-2i, \frac{13n+17}{4}+i, \frac{17n+5}{2}+2i, -\frac{15n+5}{2}-2i \right)$$
  and
  $$\S\left(\overline{R}_{\frac{n+3}{2}+2i}\right)=\left(  2n+2+4i, -1, -\frac{13n+5}{2}-2i, -(n+1), -\frac{19n+11}{4}+i,
-\frac{17n+17}{4}-i, \right.$$
$$\left. -n, \frac{15n+5}{2}+2i, 0 \right).$$
If $r$ is even, then write  $r=\frac{n+5}{2}+2i$ where $i\in \left[0,\frac{n-11}{4}\right]$.
  It is not hard to see that from the
 $\texttt{F}$,
$\texttt{H}$, $\texttt{B}$, $\texttt{A}$, $\texttt{R}$, $\texttt{J}$, $\texttt{O}$, $\texttt{D}$ and $\texttt{C}$
diagonal cells we get:
  $$\E\left(\overline{R}_{\frac{n+5}{2}+2i}\right)=\left(
2n+4+4i, -(2n+5+4i) , -\frac{13n+5}{2}-2i, \frac{11n+5}{2}+2i,
  -\frac{19n-1}{4}+i, \right.$$
$$\left.   \frac{n-5}{2}-2i, \frac{17n+13}{4}+i, \frac{17n+7}{2}+2i, -\frac{15n+7}{2}-2i \right)$$
  and
    $$\S\left(\overline{R}_{\frac{n+5}{2}+2i}\right)=\left( 2n+4+4i, -1, -\frac{13n+7}{2}-2i, -(n+1), -\frac{23n+3}{4}+i,
-\frac{21n+13}{4}-i, \right.$$
$$ \left. -n, \frac{15n+7}{2}+2i, 0 \right).$$

\item [Row $n-1$] There are two ad hoc values plus the elements of the $\texttt{D}$,
$\texttt{C}$, $\texttt{F}$, $\texttt{H}$, $\texttt{B}$, \texttt{A} and $\texttt{R}$ diagonals. Namely:
$$\E(\overline{R}_{n-1})=\left( 9n, -8n, 3n-3, -(3n-2) , -(7n-1), 6n-1, -\frac{9n+3}{2}, -\frac{n-1}{2}, 5n+2 \right)$$
and
$$\S(\overline{R}_{n-1})=\left( 9n, n, 4n-3, n-1, -6n, -1, -\frac{9n+5}{2}, -(5n+2), 0 \right).$$

\item [Row $n$] There are four ad hoc values plus the elements of the $\texttt{D}$,
$\texttt{C}$, $\texttt{F}$, $\texttt{B}$ and $\texttt{A}$ diagonals. Namely:
$$\E(\overline{R}_n)=(3n+3, 9n+1, -(8n+1) , 3n-1, -(3n+1), -7n, 6n, -(3n+2), 1 )$$
and
$$\S(\overline{R}_{n})=( 3n+3, 12n+4, 4n+3, 7n+2, 4n+1, -(3n-1), 3n+1, -1, 0 ).$$
\end{description}
Now we list the elements and the partial sums of the columns.
\begin{description}
\item[Column $1$] There are three ad hoc values plus the elements of the $\texttt{K}$,
$\texttt{A}$, $\texttt{B}$, $\texttt{F}$, $\texttt{C}$ and $\texttt{D}$ diagonals. Namely:
$$\E(\overline{C}_1)=( n-1, -(3n+4), 5n+3, -(6n+4), -3n, 2n+2, -(8n-1), 9n, 3n+3 )$$
and
$$\S(\overline{C}_1)=( n-1, -(2n+5), 3n-2, -(3n+6), -(6n+6), -(4n+4), -(12n+3), -(3n+3), 0 ).$$

\item[Column $2$]  It is not hard to see that from the
 $\texttt{L}$,
$\texttt{G}$, $\texttt{M}$, $\texttt{A}$, $\texttt{B}$, $\texttt{H}$, $\texttt{F}$, $\texttt{C}$ and $\texttt{D}$
diagonal cells we get:
$$\E(\overline{C}_2)=( 5n, -(n-2), -(4n+3), 5n+4, -(6n+5), -(2n+3), 2n+4, -8n, 9n+1 )$$
and
$$\S(\overline{C}_2)=( 5n, 4n+2, -1, 5n+3, -(n+2), -(3n+5), -(n+1), -(9n+1), 0).$$

\item[Column $3$]  It is not hard to see that from the
 $\texttt{D}$,
$\texttt{N}$, $\texttt{G}$, $\texttt{K}$, $\texttt{A}$, $\texttt{B}$, $\texttt{H}$, $\texttt{F}$ and $\texttt{C}$
diagonal cells we get:
$$\E(\overline{C}_3)=( 9n+2, 4n+1, -(n-3), -(3n+5), 5n+5, -(6n+6), -(2n+5), 2n+6, -(8n+1))$$
and
$$\S(\overline{C}_3)=( 9n+2, 13n+3, 12n+6, 9n+1, 14n+6, 8n, 6n-5, 8n+1, 0 ).$$

\item[Column $4$]  It is not hard to see that from the
 $\texttt{C}$,
$\texttt{D}$, $\texttt{L}$, $\texttt{G}$, $\texttt{M}$, $\texttt{A}$, $\texttt{B}$, $\texttt{H}$ and $\texttt{F}$
diagonal cells we get:
$$\E(\overline{C}_4)=( -(8n+2), 9n+3, 5n-1, -(n-4), -(4n+4), 5n+6, -(6n+7), -(2n+7), 2n+8)$$
and
$$\S(\overline{C}_4)=( -(8n+2), n+1, 6n, 5n+4, n, 6n+6, -1, -(2n+8), 0 ).$$

\item[Column $5$]  It is not hard to see that from the
 $\texttt{C}$,
$\texttt{D}$, $\texttt{N}$, $\texttt{G}$, $\texttt{K}$, $\texttt{A}$, $\texttt{B}$, $\texttt{H}$ and $\texttt{F}$
diagonal cells we get:
$$\E(\overline{C}_5)=( -(8n+3), 9n+4, 4n, -(n-5), -(3n+6), 5n+7, -(6n+8), -(2n+9), 2n+10 )$$
and
$$\S(\overline{C}_5)=( -(8n+3), n+1, 5n+1, 4n+6, n, 6n+7, -1, -(2n+10), 0 ).$$

\item[Column $6$ to $\frac{n-1}{2}$]  We have to distinguish two cases, depending on the parity of the column $c$.
If $c$ is even, then write $c=6+2i$ where $i \in [0,\frac{n-15}{4}]$.
 It is not hard to see that from the
 $\texttt{C}$,
$\texttt{D}$, $\texttt{L}$, $\texttt{G}$, $\texttt{M}$, $\texttt{A}$, $\texttt{B}$, $\texttt{H}$ and $\texttt{F}$
diagonal cells we get:
$$\E(\overline{C}_{6+2i})=\left( -(7n+4+2i), 8n+5+2i, 5n-2-i, -(n-6-2i), -(4n+5+i),  \right.$$
$$\left. 5n+8+2i,  -(6n+9+2i), -(2n+11+4i), 2n+12+4i \right)$$
  and
$$\S(\overline{C}_{6+2i})=( -(7n+4+2i), n+1, 6n-1-i, 5n+5+i, n, 6n+8+2i, -1, -(2n+12+4i), 0 ).$$
If $c$ is odd, then write $c=7+2i$ where $i \in [0,\frac{n-15}{4}]$.
It is not hard to see that from the
 $\texttt{C}$,
$\texttt{D}$, $\texttt{N}$, $\texttt{G}$, $\texttt{K}$, $\texttt{A}$, $\texttt{B}$, $\texttt{H}$ and $\texttt{F}$
diagonal cells we get:
$$\E(\overline{C}_{7+2i})=\left( -(7n+5+2i), 8n+6+2i, 4n-1-i, -(n-7-2i), -(3n+7+i),  \right.$$
 $$\left. 5n+9+2i, -(6n+10+2i), -(2n+13+4i), 2n+14+4i \right)$$
  and
$$\S(\overline{C}_{7+2i})=(-(7n+5+2i), n+1, 5n-i, 4n+7+i, n, 6n+9+2i, -1, -(2n+14+4i), 0).$$

\item[Column $\frac{n+1}{2}$] The are three ad hoc values plus the elements of the $\texttt{C}$,
$\texttt{D}$, $\texttt{L}$, $\texttt{P}$, $\texttt{A}$ and $\texttt{B}$ diagonals. Namely:
$$\E\left(\overline{C}_{\frac{n+1}{2}}\right)=\left( n+2, -\frac{15n-3}{2}, \frac{17n-1}{2}, \frac{19n+3}{4} , n, -\frac{15n+7}{4}, \frac{11n+5}{2},
  -\frac{13n+7}{4},\right.$$
  $$\left. -(3n+1) \right)$$
  and
$$\S\left(\overline{C}_{\frac{n+1}{2}}\right)=\left(n+2, -\frac{13n-7}{2}, 2n+3, \frac{27n+15}{4}, \frac{31n+1}{4}, 4n+2, \frac{19n+9}{2},
  3n+1, 0 \right).$$

\item[Column $\frac{n+3}{2}$ to $n-2$]  We have to distinguish two cases, depending on the parity of the
column $c$. If $c$ is odd, then write $c=\frac{n+3}{2}+2i$ where $i \in [0,\frac{n-7}{4}]$.
It is not hard to see that from the
 $\texttt{E}$,
$\texttt{I}$, $\texttt{C}$, $\texttt{D}$, $\texttt{O}$, $\texttt{J}$, $\texttt{R}$, $\texttt{A}$ and $\texttt{B}$
diagonal cells we get:
$$\E\left(\overline{C}_{\frac{n+3}{2}+2i}\right)=\left(-(2n-4i), 2n-1-4i, -\frac{15n-1}{2}-2i, \frac{17n+1}{2}+2i, \frac{17n+9}{4}+i, \right.$$
$$\left. \frac{n-3}{2}-2i,  -\frac{19n-1}{4}+i, \frac{11n+7}{2}+2i,
  -\frac{13n+9}{2}-2i,
\right)$$
and
$$\S\left(\overline{C}_{\frac{n+3}{2}+2i}\right)=\left(-(2n-4i), -1, -\frac{15n+1}{2}-2i, n, \frac{21n+9}{4}+i, \right.$$
$$ \left. \frac{23n+3}{4}-i, n+1, \frac{13n+9}{2}+2i, 0 \right).$$
If $c$ is even, then write $c=\frac{n+5}{2}+2i$ where $i \in [0,\frac{n-11}{4}]$.
It is not hard to see that from the
 $\texttt{E}$,
$\texttt{I}$, $\texttt{C}$, $\texttt{D}$, $\texttt{Q}$, $\texttt{J}$, $\texttt{P}$, $\texttt{A}$ and $\texttt{B}$
diagonal cells we get:
  $$\E\left(\overline{C}_{\frac{n+5}{2}+2i}\right)=\left( -(2n-2-4i), 2n-3-4i, -\frac{15n+1}{2}-2i, \frac{17n+3}{2}+2i, \frac{13n+17}{4}+i,\right.$$
 $$\left. \frac{n-5}{2}-2i, -\frac{15n+3}{4}+i, \frac{11n+9}{2}+2i, -\frac{13n+11}{2}-2i \right)$$
  and
$$\S\left(\overline{C}_{\frac{n+5}{2}+2i}\right)=\left( -(2n-2-4i), -1, -\frac{15n+3}{2}, n, \frac{17n+17}{4}+i, \frac{19n+7}{4}-i, n+1,\right.$$
$$\left.  \frac{13n+11}{2}+2i, 0 \right).$$

\item[Column $n-1$] There are two ad hoc values plus the elements of the $\texttt{A}$,
$\texttt{B}$, $\texttt{E}$, $\texttt{I}$, $\texttt{C}$, $\texttt{D}$ and $\texttt{Q}$ diagonals. Namely:
$$\E(\overline{C}_{n-1})=\left(6n+1, -(7n+2), -(n+5), n+4, -(8n-3), 9n-2, \frac{7n+5}{2},\right.$$
$$\left. -\frac{n-1}{2}, -(3n+2) \right)$$
and
$$\S(\overline{C}_{n-1})=\left( 6n+1, -(n+1), -(2n+6), -(n+2), -(9n-1), -1, \frac{7n+3}{2}, 3n+2, 0 \right).$$

\item[Column $n$] There are four ad hoc values plus the elements of the $\texttt{A}$,
$\texttt{B}$, $\texttt{E}$,  $\texttt{C}$ and $\texttt{D}$ diagonals. Namely:
$$\E(\overline{C}_n)=(-(5n+1), 6n+2, -(7n+3), -(n+3), n+1, -(8n-2), 9n-1, 5n+2, 1 )$$
and
$$\S(\overline{C}_n)=(-(5n+1), n+1, -(6n+2), -(7n+5), -(6n+4), -(14n+2), -(5n+3), -1, 0 ).$$
\end{description}

Finally, we consider the support of $A$:
$$\begin{array}{rcl}
supp(A)& = & \{1\} \cup \left[2,\frac{n-3}{2}\right]_{(\texttt{J})}\cup \{\frac{n-1}{2} \} \cup
\left[\frac{n+1}{2},n-2\right]_{(\texttt{G})}\cup \{n-1,n,n+1,n+2\}  \cup\\[3pt]
&&\cup \left[n+3,2n\right]_{(\texttt{E}\cup \texttt{I})}\cup
\left[2n+2,3n-1\right]_{(\texttt{F}\cup \texttt{H})}\cup  \{3n,3n+1,3n+2,3n+3\}
\cup\\ [3pt]
&& \cup \left[3n+4,\frac{13n+13}{4}\right]_{(\texttt{K})}
\cup\left[\frac{13n+17}{4},\frac{7n+5}{2}\right]_{(\texttt{Q})}   \cup
\left[\frac{7n+7}{2},\frac{15n+7}{4}\right]_{(\texttt{P})} \cup \\[3pt]
&& \cup \left[\frac{15n+11}{4},4n+1\right]_{(\texttt{N})}
\cup  \left[4n+3,\frac{17n+5}{4}\right]_{(\texttt{M})}
\cup \left[\frac{17n+9}{4},\frac{9n+1}{2}\right]_{(\texttt{O})}\cup\\ [3pt]
&& \cup \left[\frac{9n+3}{2},\frac{19n-1}{4}\right]_{(\texttt{R})} \cup \left[\frac{19n+3}{4},5n\right]_{(\texttt{L})}
\cup \{5n+1,5n+2\}  \cup \left[5n+3,6n+2\right]_{(\texttt{A})}
\cup \\ [3pt]
&& \cup \left[6n+4,7n+3\right]_{(\texttt{B})} \cup \left[7n+4,8n+3\right]_{(\texttt{C})}\cup
\left[8n+5,9n+4\right]_{(\texttt{D})}\\ [3pt]
& =& [1,9n+4]\setminus\{2n+1,4n+2,6n+3,8n+4\}.
\end{array}$$
This concludes the proof.
\end{proof}

\begin{ex}\label{4.9}
Following the proof of Proposition \ref{prop:9} we obtain the
integer globally simple $\H_9(15; 9)$ below.
 \begin{center}
\begin{Tiny}
$\begin{array}{|r|r|r|r|r|r|r|r|r|r|r|r|r|r|r|}\hline
     14&    75&   137&  -122&    &    &    &    17&   -30&    &    &    &  -106&    91&   -76 \\\hline
     -49&   -13&    61&   138&  -123&    &    &    &    29&   -28&    &    &    &  -107&    92 \\\hline
      78&   -63&   -12&    74&   139&  -109&    &    &    &    27&   -26&    &    &    &  -108 \\\hline
     -94&    79&   -50&   -11&    60&   125&  -110&    &    &    &    25&   -24&    &    &     \\\hline
      &   -95&    80&   -64&   -10&    73&   126&  -111&    &    &    &    23&   -22&    &     \\\hline
      &    &   -96&    81&   -51&    -9&    59&   127&  -112&    &    &    &    21&   -20&      \\\hline
      &    &    &   -97&    82&   -65&    -8&    72&   128&  -113&    &    &    &    19&   -18 \\\hline
     -45&    &    &    &   -98&    83&   -52&    15&    66&   129&  -114&    &    &    &    16 \\\hline
      32&   -33&    &    &    &   -99&    84&   -58&     6&    53&   130&  -115&    &    &      \\\hline
      &    34&   -35&    &    &    &  -100&    85&   -71&     5&    67&   131&  -116&    &      \\\hline
      &    &    36&   -37&    &    &    &  -101&    86&   -57&     4&    54&   132&  -117&      \\\hline
      &    &    &    38&   -39&    &    &    &  -102&    87&   -70&     3&    68&   133&  -118 \\\hline
    -119&    &    &    &    40&   -41&    &    &    &  -103&    88&   -56&     2&    55&   134 \\\hline
     135&  -120&    &    &    &    42&   -43&    &    &    &  -104&    89&   -69&    -7&    77 \\\hline
      48&   136&  -121&    &    &    &    44&   -46&    &    &    &  -105&    90&   -47&     1\\\hline
\end{array}$
\end{Tiny}
\end{center}
\end{ex}

\begin{lem}\label{percorso}
  For any $n\equiv 7\pmod {14}$ such that $n\geq 21$, write $r=\frac{n-7}{2}$.
   Let $A_n$ be a $9$-diagonal array whose filled diagonals are $D_1,D_2,\ldots,D_7$, $D_{r+7}$ and $D_{r+8}$.
  Then $(\R,\C)$, where $\R=(1,1,\ldots,1)$ and $\C=(\underbrace{-1,\ldots,-1}_8,1,1,\ldots,1)$,
  is a solution of $P(A_n)$.
  \end{lem}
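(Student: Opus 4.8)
The plan is to recast the Crazy Knight's Tour as an explicit permutation of $skel(A_n)$ and to read off its cycle structure. Since $\R=(1,\ldots,1)$, I first record each filled cell by a pair $(d,j)$, where $d\in\Z/9\Z$ indexes the filled diagonal (so the cell lies in $D_{\ell_d}$ with the cyclic convention $\ell_1=1,\ldots,\ell_7=7$, $\ell_8=\frac{n+7}{2}$, $\ell_9=\frac{n+9}{2}$, $\ell_0=\ell_9$) and $j\in\{1,\dots,n\}$ is its column. A single entry of $L_{\R,\C}$ is a row move followed by a column move. The row move (left to right) sends $(d,j)$ to the cell of the same row on diagonal $d-1$, i.e. to column $j+g_d$, where $g_d=\ell_d-\ell_{d-1}$ reduced modulo $n$; here $g_1=g_8=r$ and $g_d=1$ for the remaining seven indices. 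The subsequent column move is downward when $c_{j+g_d}=1$ (column $\geq 9$) and upward when $c_{j+g_d}=-1$ (column $\le 8$). Carrying this out gives
$$\phi(d,j)=\begin{cases}(d,\,j+g_d) & \text{if } (j+g_d)\bmod n\in\{9,\dots,n\},\\ (d-2,\,j+g_d) & \text{if } (j+g_d)\bmod n\in\{1,\dots,8\},\end{cases}$$
so that $L_{\R,\C}(x)$ is precisely the forward $\phi$-orbit of $x$, and $(\R,\C)$ solves $P(A_n)$ exactly when $\phi$ is a single $9n$-cycle.

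Next I would pass to a first-return map. Call a cell \emph{special} if its column lies in $\{1,\dots,8\}$; there are $72$ of them. Between two consecutive special cells the orbit stays on one diagonal $d$ and advances the column by $g_d$ at each step, so the non-special cells split into runs indexed by their initial special cell; every run terminates because the $g_d$-orbit of any column meets $\{1,\dots,8\}$ (this uses that $\{1,\dots,8\}$ meets every residue class modulo $7$). Hence the number of $\phi$-cycles equals the number of cycles of the induced first-return map $\vartheta$ on the $72$ special cells, and a direct unwinding gives $\vartheta(d,j)=(d-2,\,T_{g_d}(j))$, where $T_g(j)$ is the first term of $j+g,j+2g,\dots\pmod n$ that lands in $\{1,\dots,8\}$.

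The arithmetic heart is the computation of $T_1$ and $T_r$ on $\{1,\dots,8\}$. For $g=1$ one gets $T_1=(1\,2\,3\,4\,5\,6\,7\,8)$, the $8$-cycle incrementing by $1$ with $8\mapsto 1$. For $g=r$ I would use $n\equiv 7\pmod{14}$, which forces $7\mid r$, $7\mid n$ and $\gcd(r,n)=7$: the $+r$ orbit of a column preserves its residue modulo $7$ and runs through the entire residue class. Among $\{1,\dots,8\}$ the residue $1$ occurs at the two columns $1$ and $8$ while every other residue occurs once, so $T_r$ fixes $2,\dots,7$; whether $1$ and $8$ are visited in the order $1\mapsto 8$, $8\mapsto 1$ is settled by reducing the step to $t\mapsto t+s\pmod m$ on the residue-$1$ class, where $n=7m$, $r=7s$, $\gcd(s,m)=1$. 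This yields $T_r=(1\,8)$.

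Finally I would assemble these over one full period of the diagonal index. Since $\vartheta$ shifts $d$ by $-2$ and $\gcd(2,9)=1$, nine applications return $d$ to its start and transform the column by a composition $\Theta_{d_0}$ of seven copies of $T_1$ and two of $T_r$. In the cyclic order $d\mapsto d-2$ the indices $1$ and $8$ are adjacent (as $1-2\equiv 8\pmod 9$), so the two factors $T_r$ occur consecutively; since $T_r^2=\mathrm{id}$ they cancel and $\Theta_{d_0}=T_1^{7}$, an $8$-cycle (the split case $d_0=8$ gives $T_r\,T_1^{7}\,T_r$, conjugate to $T_1^{7}$, and all $\Theta_{d_0}$ are conjugate anyway). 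Thus every $\vartheta$-orbit has length $9\cdot 8=72$, so $\vartheta$ is a single $72$-cycle and $\phi$ a single $9n$-cycle, proving $(\R,\C)$ solves $P(A_n)$. I expect the main obstacle to be the determination of $T_r$: one must handle the modular bookkeeping on the residue-$1$ class carefully to confirm the visiting order of $1$ and $8$, and to verify that every run actually returns to $\{1,\dots,8\}$ so that the reduction to $\vartheta$ is legitimate.
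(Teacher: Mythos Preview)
Your proposal is correct and gives a genuinely different proof from the paper's. The paper proceeds by a direct verification: it labels the cells of the diagonals by $d_{i,j}$, packages them into explicit subsequences $\mathbf{A}_i,\mathbf{B}_i,\mathbf{C}_i,\mathbf{D}_i,\mathbf{E}_i$, and then simply writes out $L_{\R,\C}(d_{6,8})$ as a concatenation of these pieces, leaving the reader to check that every filled cell appears exactly once. Your argument instead analyses the structure of the step map $\phi$ by passing to the first-return map $\vartheta$ on the $72$ cells in columns $1,\dots,8$, reduces the question to the cycle type of the column map $\Theta_{d_0}=T_{g_{d_0-16}}\cdots T_{g_{d_0}}$, and exploits the coincidence $1-2\equiv 8\pmod 9$ to collapse the two occurrences of $T_r$ and obtain an $8$-cycle in every case. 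This is more conceptual and explains \emph{why} the particular choice of eight $-1$'s works, whereas the paper's approach is a bare certificate. Two small remarks: the determination of $T_r$ is easier than you suggest, since for $j\in\{1,8\}$ the $+r$-orbit modulo $n$ visits every element of the residue-$1$ class before returning, so it must hit the other of $1,8$ first; no further bookkeeping on $s$ and $m$ is needed. Also, your argument actually uses only that $7\mid n$ and $\gcd(r,n)=7$, so it goes through for every $n\equiv 7\pmod{14}$ with $n\geq 21$, matching the statement.
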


\begin{proof}
For any $i\in[1,7]\cup\{r+7,r+8\}$ set $D_i=(d_{i,1},d_{i,2},d_{i,3},\ldots,d_{i,n})$, where $d_{i,1}$ is the position $[i,1]$ of $A_n$.
Also, we set
$$\begin{array}{rcl}
\mathbf{A}_i & =&  d_{i,8},d_{i,9},d_{i,10},\ldots,d_{i,n};\\
\mathbf{B}_i & = &  d_{1,i},d_{1,i+r},d_{1,i+2r},\ldots, d_{1,i+\frac{2r}{7}r }; \\
\mathbf{C}_i & = &  d_{r+7,i},d_{r+7,i+r},d_{r+7,i+2r},\ldots, d_{r+7,i+\frac{2r}{7}r }; \\
\mathbf{D}_{1} & = & d_{1,1}, d_{1,1+r},d_{1,1+2r},\ldots, d_{1,1+(\frac{2r}{7} - 2) r }; \\
\mathbf{D}_{2} & = & d_{1,8}, d_{1,8+r}; \\
\mathbf{E}_{1} & = & d_{r+7,1}, d_{r+7,1+r},d_{r+7,1+2r},\ldots, d_{r+7,1+(\frac{2r}{7} - 2) r }; \\
\mathbf{E}_{2} & = & d_{r+7,8},d_{r+7,8+r}.
\end{array}$$
To aid in the proof, at the webpage
\begin{center}
\texttt{http://anita-pasotti.unibs.it/Publications.html},
\end{center}
we give  a schematic picture of where each of these sequences fills cells.
By a direct check, one can verify that
$$\begin{array}{rcl}
L_{\R,\C}(d_{6,8})&=&(\mathbf{A}_{6}, d_{4,1}, d_{2,2}, d_{r+8,3}, d_{7,4}, d_{5,5}, d_{3,6}, \mathbf{B}_7,\mathbf{C}_7,
d_{6,7},\\
& & \mathbf{A}_4, d_{2,1},d_{r+8,2}, d_{7,3}, d_{5,4}, d_{3,5},\mathbf{B}_6, \mathbf{C}_6, d_{6,6}, d_{4,7},\\
& & \mathbf{A}_2, d_{r+8,1},d_{7,2}, d_{5,3}, d_{3,4}, \mathbf{B}_5, \mathbf{C}_5, d_{6,5},
d_{4,6},d_{2,7},\\
& & \mathbf{A}_{r+8}, d_{7,1},d_{5,2}, d_{3,3},\mathbf{B}_4, \mathbf{C}_4, d_{6,4},
d_{4,5},d_{2,6},d_{r+8,7},\\
& &\mathbf{A}_{7}, d_{5,1}, d_{3,2},\mathbf{B}_3, \mathbf{C}_3, d_{6,3},d_{4,4},d_{2,5},d_{r+8,6},d_{7,7},\\
 & & \mathbf{A}_{5}, d_{3,1},\mathbf{B}_2, \mathbf{C}_2, d_{6,2}, d_{4,3},d_{2,4},d_{r+8,5},d_{7,6},d_{5,7},\\
& & \mathbf{A}_{3}, \mathbf{D}_{1}, \mathbf{E}_{2}, d_{6,1},d_{4,2},d_{2,3},d_{r+8,4}, d_{7,5}, d_{5,6},
d_{3,7},\mathbf{D}_{2}, \mathbf{E}_{1}).
\end{array}$$
Hence, it is easy to see that $L_{\R,\C}(d_{6,8})$ covers all the filled cells of $A_n$.
\end{proof}

Now we are ready to prove Theorem  \ref{main}.
\begin{proof}[Proof of Theorem \ref{main}]
The result follows from Theorem \ref{thm:biembedding}, once we have proved the existence of a relative Heffter array with compatible simple orderings $\omega_r$ and $\omega_c$.

\noindent (1) For any $n$ odd, a $\H_n(n;3)$ and a $\H_{2n}(n;3)$ are constructed in Propositions \ref{prop:3-t=n} and
\ref{prop:3-t=2n}, respectively. Clearly these are globally simple Heffter arrays.
Since they are cyclically $3$-diagonal their compatibility follows from \cite[Proposition 3.4]{CMPPHeffter}.

\noindent (2) Let $n\equiv 3 \pmod 4$. A $\H_3(n;3)$ and a $\H_5(n;5)$ are constructed in \cite[Propositions 5.1 and
5.5]{CMPPRelative}, respectively.
As before these are globally simple Heffter arrays and
since they are cyclically $3$-diagonal and $5$-diagonal, respectively,
 their compatibility follows from \cite[Proposition 3.4]{CMPPHeffter}.
A globally simple $\H_7(n;7)$ is given in Proposition \ref{prop:7}.
 Since this is cyclically $7$-diagonal its compatibility follows from  \cite[Propositions 3.4 and 3.6]{CMPPHeffter}.
 Finally, a globally simple $\H_9(n;9)$ is given in Proposition \ref{prop:9}.
 Since this is $9$-diagonal with width $\frac{n-9}{2}$, if $\gcd\left(n,\frac{n-7}{2} \right)=\gcd(n,7)=1$ its
compatibility follows from  \cite[Proposition 4.19]{CDP}.
 If $\gcd(n,7)\neq 1$ the result follows from Lemma \ref{percorso}.
 \end{proof}

\section{Archdeacon arrays}\label{sec:Arch}

In this section we introduce a further generalization of the concept of Heffter array. In particular we will
consider p.f. arrays where the number of filled cells in each row and in each column is not fixed.

\begin{defi}\label{def:ArchdeaconH}
An \emph{Archdeacon array} $A$ over an abelian group $(G,+)$ is an $m\times n$  p.f.  array with elements in $G$,
such that:
\begin{itemize}
\item[($\rm{a})$] $\mathcal{E}(A)$ is a set;
\item[($\rm{b})$] for every $g\in G$, $g\in \mathcal{E}(A)$ implies $-g\not\in \mathcal{E}(A)$;
\item[($\rm{c})$] the elements in every row and column sum to $0$.
\end{itemize}
\end{defi}
An example of this kind of arrays will be given in Figure \ref{fig1}.
We note that, in the special case $G=\mathbb{Z}_v$, $\pm \E(A)=\mathbb{Z}_v-J$ where $J$ is a subgroup of $\mathbb{Z}_v$
and all the rows (resp. columns) have the same number of filled cells, we meet again the definition of a relative
Heffter array.
The purpose of this section is to show how Archdeacon arrays can be used in order to obtain biembeddings and
orthogonal cycle decompositions.
First of all we need a generalization of \cite[Proposition 2.6]{BP}, stated by Buratti in
\cite[Theorem 3.3]{B}.
All the well-known concepts about the differences method can be found in \cite{B, CMPPRelative}.

\begin{thm}\label{thm:basecycles2}
Let $G$ be an additive group and $\mathcal{B}$ be a set of cycles with vertices in $G$.
If the list of differences of $\mathcal{B}$ is a set, say $\Lambda$, then $\mathcal{B}$
is a set of base cycles of
a $G$-regular cycle decomposition of $\Cay[G:\Lambda]$.
\end{thm}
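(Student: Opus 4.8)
The plan is to realize the cycle decomposition explicitly as the $G$-orbit of the base cycles $\mathcal{B}$ and then verify that these translates partition the edge set of $\Cay[G:\Lambda]$, using the hypothesis that the list of differences is a genuine set. First I would recall the standard setup of the differences method: for a cycle $B = (x_0, x_1, \ldots, x_{\ell-1}) \in \mathcal{B}$, its edges are the pairs $\{x_i, x_{i+1}\}$ (indices mod $\ell$), and each such edge contributes the two differences $\pm(x_{i+1}-x_i)$ to the list of differences of $\mathcal{B}$. Since $\Lambda = -\Lambda$ and $\Lambda$ is exactly this list of differences, every edge $\{x,y\}$ of $\Cay[G:\Lambda]$ satisfies $x-y \in \Lambda$, so each edge of $\Cay[G:\Lambda]$ is of the form realized by some difference. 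I would then define $\mathcal{D} = \{B+g \mid B \in \mathcal{B},\ g \in G\}$ and show $\mathcal{D}$ is the desired decomposition.

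The key steps, in order, are as follows. First, I would show that every edge of $\Cay[G:\Lambda]$ is covered by at least one translate $B+g$: given an edge $\{x,x+\lambda\}$ with $\lambda \in \Lambda$, the hypothesis guarantees that $\lambda$ arises as a difference $x_{i+1}-x_i$ from some edge $\{x_i,x_{i+1}\}$ of some base cycle $B$, and then the translate $B + (x - x_i)$ contains the edge $\{x, x+\lambda\}$. Second, and this is where the "set" hypothesis does the real work, I would prove that no edge is covered \emph{twice}, i.e. the translates have pairwise edge-disjoint images. Suppose an edge $\{x,x+\lambda\}$ were covered by two translates $B+g$ and $B'+g'$; tracing back, this produces two (edge, translate) pairs both yielding the same difference $\lambda$ at the same pair of endpoints. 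Because the list of differences of $\mathcal{B}$ contains $\lambda$ with multiplicity exactly one (that is the meaning of the list being a set), the base edge and the translation are forced to coincide, giving $(B,g)=(B',g')$. Third, I would note that the sharply transitive action of $G$ on itself by translation leaves $\mathcal{D}$ invariant by construction, so $\mathcal{D}$ is $G$-regular.

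The main obstacle I expect is the bookkeeping in the second step, namely making precise the bijection between oriented edges of the translates and ordered pairs (oriented base edge, group element), so that counting with multiplicity is rigorous. The cleanest way to handle this is to work with directed differences: each directed edge $(x_i, x_{i+1})$ of a base cycle yields the directed difference $x_{i+1}-x_i$, and the statement that the list of \emph{all} differences is a set means that the map sending an oriented base edge to its directed difference is injective on $\Lambda$ (up to the sign convention folded into $\Lambda = -\Lambda$). Once this injectivity is in place, the map $(\,\text{oriented base edge } (x_i,x_{i+1}),\ g\,) \mapsto (x_i+g,\ x_{i+1}+g)$ is a bijection onto the oriented edges of $\Cay[G:\Lambda]$, and edge-disjointness follows immediately. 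This is precisely the generalization of \cite[Proposition 2.6]{BP} / \cite[Theorem 3.3]{B} referenced in the statement, the only new feature being that we no longer assume all cycles have the same length, which does not affect the argument since the differences method treats each edge independently of the cycle length.
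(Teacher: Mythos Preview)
The paper does not supply its own proof of this theorem: it is quoted as a known result, attributed to Buratti \cite[Theorem 3.3]{B} (generalizing \cite[Proposition 2.6]{BP}), and is used as a black box in the proof of Proposition~\ref{ArchdeaconToDecompositions}. Your outline is the standard differences-method argument and is correct; the only cosmetic point is that the conclusion ``$(B,g)=(B',g')$'' in your uniqueness step is slightly stronger than what is needed (and could fail if a base cycle had a nontrivial translation stabilizer), but the actual required conclusion---that distinct translates are edge-disjoint---follows directly from the injectivity of the map from oriented base edges to directed differences, exactly as you set it up in the final paragraph.
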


Generalizing Proposition \ref{HeffterToDecompositions}, an Archdeacon array can be used to obtain regular cycle
decompositions of Cayley graphs as follows.

\begin{prop}\label{ArchdeaconToDecompositions}
Let $A$ be an $m\times n$ Archdeacon array on an abelian group $G$ with simple orderings
$\omega_r=\omega_{\overline{R}_1}\circ \ldots \circ\omega_{\overline{R}_m}$ for the rows
and $\omega_c=\omega_{\overline{C}_1}\circ \ldots \circ\omega_{\overline{C}_n}$ for the columns. Then:
\begin{itemize}
\item[(1)] $\B_{\omega_r}=\{\S(\omega_{\overline{R}_i}) \mid i\in[1,m]\}$ is a set of base cycles of a
$G$-regular cycle decomposition $\D_{\omega_r}$ of $\Cay[G:\pm \E(A)]$;
\item[(2)] $\B_{\omega_c}=\{\S(\omega_{\overline{C}_j}) \mid j\in [1, n]\}$ is a set of base cycles of a
$G$-regular cycle decomposition $\D_{\omega_c}$ of $\Cay[G:\pm \E(A)]$;
\item[(3)] the cycle decompositions $\D_{\omega_r}$ and $\D_{\omega_c}$ are orthogonal.
\end{itemize}
\end{prop}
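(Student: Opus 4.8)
The plan is to obtain parts (1) and (2) as direct applications of Theorem \ref{thm:basecycles2}, and to establish the orthogonality (3) by a bookkeeping argument on the labels of the edges of $\Cay[G:\pm\E(A)]$.

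For (1), I would first check that each $\S(\omega_{\overline{R}_i})$ is a genuine cycle. Writing $\omega_{\overline{R}_i}=(t_1,\dots,t_s)$ with partial sums $s_\ell=t_1+\dots+t_\ell$, condition (c) gives $s_s=0$ while simplicity of $\omega_{\overline{R}_i}$ forces $s_1,\dots,s_s$ to be pairwise distinct, so $(s_1,\dots,s_s)$ is an $s$-cycle. Its list of differences is $\pm\{t_1,\dots,t_s\}=\pm\E(\overline{R}_i)$, the wrap-around edge $\{s_s,s_1\}=\{0,s_1\}$ contributing $\pm t_1$. Summing over the rows, the list of differences of $\B_{\omega_r}$ is $\bigcup_i\pm\E(\overline{R}_i)=\pm\E(A)$. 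Conditions (a) and (b) are exactly what is needed to see this list is a set: (a) makes the $\E(\overline{R}_i)$ consist of distinct entries whose union is $\E(A)$ without repetition, and (b) makes $\E(A)$ and $-\E(A)$ disjoint, so nothing repeats between the two halves. Hence $\Lambda:=\pm\E(A)$ is a set and Theorem \ref{thm:basecycles2} yields the $G$-regular decomposition $\D_{\omega_r}$ of $\Cay[G:\pm\E(A)]$. Part (2) is identical with columns in place of rows, since every filled cell lies in exactly one column and so $\bigcup_j\pm\E(\overline{C}_j)=\pm\E(A)$ as well.

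For (3) I would first introduce edge labels. Because (b) forces $\E(A)\cap(-\E(A))=\emptyset$, every edge $\{x,y\}$ of $\Cay[G:\pm\E(A)]$ has exactly one of $x-y,\,y-x$ in $\E(A)$; call this element the label of the edge. Labels are translation invariant, and by the computation above the edges of any translate of $\S(\omega_{\overline{R}_i})$ carry precisely the labels in $\E(\overline{R}_i)$, each exactly once, and similarly for columns. Now take blocks $B=\S(\omega_{\overline{R}_i})+g$ of $\D_{\omega_r}$ and $B'=\S(\omega_{\overline{C}_j})+h$ of $\D_{\omega_c}$, and suppose they shared two distinct edges $e_1,e_2$. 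The label of each of $e_1,e_2$ lies in $\E(\overline{R}_i)\cap\E(\overline{C}_j)$; but since $\E(A)$ is a set, an element occurring in both row $i$ and column $j$ must sit in the single cell $(i,j)$, so this intersection contains at most one element. Thus $e_1$ and $e_2$ carry a common label $t$, and since inside the single block $B$ each label is realized by exactly one edge, $e_1=e_2$, a contradiction. Therefore $|B\cap B'|\le1$, which is orthogonality.

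The two difference computations are routine; the step requiring care is the orthogonality argument, where the essential input is the ``a row and a column meet in at most one cell'' fact. This is precisely where hypothesis (a)---that $\E(A)$ is a set rather than a multiset---is used in a crucial way, and I expect converting this combinatorial fact about the array into the edge-sharing conclusion to be the main obstacle to phrase cleanly.
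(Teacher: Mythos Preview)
Your proposal is correct and follows the same approach as the paper: parts (1) and (2) are obtained by computing the list of differences of the partial-sum cycles and invoking Theorem~\ref{thm:basecycles2}, and part (3) rests on the fact that the elements of $\pm\E(A)$ are pairwise distinct. The paper's own proof of (3) is a one-line remark to that effect, whereas you spell out the edge-labeling argument in full; your version is more detailed but not genuinely different in method.
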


\begin{proof}
(1) Since the ordering $\omega_r$ is simple the elements of $\B_{\omega_r}$ are cycles of lengths
$|\E(\overline{R}_1)|,\ldots,$ $ |\E(\overline{R}_m)|$
and by definition of partial sums the list of differences
of $\S(\omega_{\overline{R}_i})$ is $\pm\E(\overline{R}_i)$, for any $i\in[1,m]$.
Hence, the list of differences  of $\B_{\omega_r}$ is $\pm \E(A)$ and so the thesis follows from Theorem
\ref{thm:basecycles2}.
Obviously, (2) can be proved in the same way. Note that, in general, the cycles of $\B_{\omega_r}$ and those of
$\B_{\omega_c}$
have different lengths.
(3) follows from the requirement that
the elements of $\pm \E(A)$ are pairwise distinct.
\end{proof}

Moreover the pair of cycles decompositions obtained from an Archdeacon array can be biembedded under the same hypothesis
of Theorem \ref{thm:biembedding}. In fact, within the same proof, we have that:
\begin{thm}\label{thm:biembedding2}
Let $A$ be an Archdeacon array on an abelian group $G$ that is simple with respect to two compatible orderings
$\omega_r$ and $\omega_c$.
Then there exists a biembedding of the $G$-regular cycle decompositions $\mathcal{D}_{\omega_r^{-1}}$ and
$\mathcal{D}_{\omega_c}$ of  $\Cay[G:\pm \E(A)]$ into an orientable surface.
\end{thm}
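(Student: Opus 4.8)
The plan is to observe that the argument proving Theorem~\ref{thm:biembedding} never genuinely uses that the ambient group is $\Z_{2nk+t}$, nor that all rows (resp.\ all columns) share a common length; it uses only the three defining properties of an Archdeacon array. Accordingly I would re-run that proof essentially verbatim, replacing $\Z_{2nk+t}$ by the abstract abelian group $G$ and $K_{\frac{2nk+t}{t}\times t}$ by $\Cay[G:\pm\E(A)]$, and check at each step that the relative-Heffter hypotheses may be weakened to axioms (a)--(c) of Definition~\ref{def:ArchdeaconH}. As a preliminary remark, axiom (b) forces $0\notin\E(A)$ and $\E(A)\cap(-\E(A))=\emptyset$, so $\pm\E(A)\subseteq G\setminus\{0\}$ is a symmetric connection set and $\Cay[G:\pm\E(A)]$ is a well-defined simple graph.

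Concretely, first I would define the permutation $\bar{\rho}_0$ of $\pm\E(A)$ by $\bar{\rho}_0(a)=-\omega_r(a)$ for $a\in\E(A)$ and $\bar{\rho}_0(a)=\omega_c(-a)$ for $a\in-\E(A)$, exactly as before. This is where the Archdeacon axioms enter: property (a) guarantees that $\E(A)$ is a genuine set, so the rows partition $\E(A)$ into disjoint blocks and $\omega_r,\omega_c$ are bona fide permutations of $\E(A)$; property (b) gives the disjoint splitting of $\pm\E(A)$ on which the two branches of $\bar{\rho}_0$ live. Compatibility of $\omega_r$ and $\omega_c$ then yields $\bar{\rho}_0^2=\omega_c\circ\omega_r$ acting cyclically on $\E(A)$, while $\bar{\rho}_0$ swaps $\E(A)$ with $-\E(A)$; hence $\bar{\rho}_0$ is a single cycle on $\pm\E(A)$, and $\rho((x,x+a))=(x,x+\bar{\rho}_0(a))$ is a rotation of $\Cay[G:\pm\E(A)]$. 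Theorem~\ref{thm:embedding} then produces a cellular embedding in an orientable surface whose face boundaries are the orbits of $\rho\circ\tau$, with $\tau((x,x+a))=(x+a,x)$.

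The remaining work is the face analysis, and this is the step I expect to require the most care, since it is the only place where equal-length structure might secretly have been used. Tracing $\rho\circ\tau$ from an edge $(x,x+a)$ with $a\in\E(A)$ yields a face $F_1$ whose boundary is read off the column $\overline{C}$ containing $a$ and whose length is the \emph{local} quantity $|\E(\overline{C})|$; starting instead from $a\notin\E(A)$ yields a face $F_2$ read off the row $\overline{R}$ containing $-a$, of local length $|\E(\overline{R})|$. The crucial identity rewriting $F_2$ as a cycle traversed in the order $\omega_r^{-1}$ relies only on $\omega_r$ acting cyclically on $\E(\overline{R})$ together with $\sum_{a\in\E(\overline{R})}a=0$, which is precisely axiom (c); it invokes nothing about the group or about a uniform row length. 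I would therefore confirm that every nonoriented edge lies on exactly one $F_1$-face and one $F_2$-face, so the embedding is $2$-colourable, and that the two colour classes are exactly the $G$-orbits of the column cycles $\S(\omega_{\overline{C}_j})$ and of the reversed row cycles, namely the decompositions $\D_{\omega_c}$ and $\D_{\omega_r^{-1}}$ supplied by Proposition~\ref{ArchdeaconToDecompositions}. Beyond this bookkeeping I anticipate no genuine obstacle; the one subtlety is verifying that the partial-sum cancellation in the $F_2$ computation depends solely on the vanishing row sum and not on any equal-length feature of a Heffter array.
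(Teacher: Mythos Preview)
Your proposal is correct and follows exactly the approach the paper itself takes: the paper does not give a separate proof of Theorem~\ref{thm:biembedding2} but simply asserts that it holds ``within the same proof'' as Theorem~\ref{thm:biembedding}. Your careful verification that each step of that proof relies only on axioms (a)--(c) of Definition~\ref{def:ArchdeaconH}, and not on the group being cyclic or on uniform row/column lengths, is precisely the content the paper leaves implicit.
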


We observe that if an Archdeacon array has no empty rows/columns, then
a necessary condition for the existence of compatible orderings is  $|skel(A)|\equiv m+n-1 \pmod{2}$.
This can be proved with the same proof of \cite[Theorem 1.1]{CDDYbiem} and of \cite[Theorem 2.7]{CDP}.

Finally, as an easy consequence of Theorem \ref{thm:biembedding2}, we obtain the relationship between the Crazy Knight's
Tour Problem and globally simple Archdeacon arrays.

\begin{cor}\label{FromArchdeaconToKN}
  Let $A$ be a globally simple Archdeacon array on an abelian group $G$ such that $P(A)$ admits a solution $(\mathcal{R},\mathcal{C})$.
  Then there exists a biembedding of the $G$-regular cycle decompositions $\mathcal{D}_{\omega_r^{-1}}$ and
$\mathcal{D}_{\omega_c}$ of $\Cay[G:\pm \E(A)]$ into an orientable surface.
\end{cor}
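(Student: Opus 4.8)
The plan is to deduce the statement from Theorem \ref{thm:biembedding2} by showing that a solution of the Crazy Knight's Tour Problem is exactly the same data as a pair of compatible simple orderings. First I would fix the orderings induced by the solution $(\mathcal{R},\mathcal{C})$: for each row $\overline{R}_i$ let $\omega_{\overline{R}_i}$ be the natural ordering if $r_i=1$ and its reverse if $r_i=-1$, and define $\omega_{\overline{C}_j}$ analogously from $c_j$; then set $\omega_r=\omega_{\overline{R}_1}\circ\cdots\circ\omega_{\overline{R}_m}$ and $\omega_c=\omega_{\overline{C}_1}\circ\cdots\circ\omega_{\overline{C}_n}$. Since $A$ is globally simple the natural ordering of each row and each column is simple, and because the reverse of a simple ordering is again simple, every $\omega_{\overline{R}_i}$ and every $\omega_{\overline{C}_j}$ is simple whatever the chosen orientation. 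Hence $\omega_r$ and $\omega_c$ are simple orderings in the sense required by Theorem \ref{thm:biembedding2}.

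The main step is to identify the knight's move underlying $L_{\mathcal{R},\mathcal{C}}$ with the permutation $\omega_c\circ\omega_r$. As $\mathcal{E}(A)$ is a set, the filled cells of $A$ are in bijection with their entries, so I may regard $\omega_r$ and $\omega_c$ as permutations of $skel(A)$: the permutation $\omega_r$ advances each filled cell to the next filled cell of its row according to $r_i$, while $\omega_c$ advances each filled cell to the next filled cell of its column according to $c_j$. Reading the definition of $L_{\mathcal{R},\mathcal{C}}$, a single step of the list is precisely one row-advance followed by one column-advance, so the orbit $L_{\mathcal{R},\mathcal{C}}(i_1,j_1)$ coincides with the orbit of $(i_1,j_1)$ under $\omega_c\circ\omega_r$. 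Consequently, $(\mathcal{R},\mathcal{C})$ being a solution of $P(A)$, i.e. $L_{\mathcal{R},\mathcal{C}}$ covering all filled cells, is equivalent to $\omega_c\circ\omega_r$ being a single cycle of length $|\mathcal{E}(A)|$, which is exactly the compatibility of $\omega_r$ and $\omega_c$.

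Having produced compatible simple orderings $\omega_r$ and $\omega_c$, I would conclude by invoking Theorem \ref{thm:biembedding2}, which yields the desired biembedding of the $G$-regular cycle decompositions $\mathcal{D}_{\omega_r^{-1}}$ and $\mathcal{D}_{\omega_c}$ of $\Cay[G:\pm\mathcal{E}(A)]$ into an orientable surface. The only delicate point is the bookkeeping in the identification step: one must check that the direction conventions in the definition of $L_{\mathcal{R},\mathcal{C}}$ (a row-move then a column-move, each taken in the prescribed orientation) match the order of composition in $\omega_c\circ\omega_r$, and that the toroidal index arithmetic agrees with the cyclic structure of the row and column orderings. Once this correspondence is set up cleanly the result is immediate, mirroring the passage from Theorem \ref{thm:biembedding} to Corollary \ref{preprecedente} in the relative Heffter array case.
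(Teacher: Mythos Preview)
Your proposal is correct and follows exactly the approach the paper intends: the paper states the corollary as ``an easy consequence of Theorem \ref{thm:biembedding2}'' without giving any further argument, and your write-up simply spells out the natural identification between a solution of $P(A)$ and a pair of compatible simple orderings, mirroring the implicit passage from Theorem \ref{thm:biembedding} to Corollary \ref{preprecedente}. There is nothing to add.
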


Given two $m\times n$ p.f. arrays $A$ and $B$ defined on abelian groups $G_1$ and $G_2$, respectively, we
define their direct sum $A\oplus B$ as the $m\times n$ p.f. array $E$ whose skeleton is  $skel(A)\cup skel(B)$ and whose entries in $G_1\oplus G_2$ are so defined:
$$E[i,j]=\left\{\begin{array}{cl}
(A[i,j],B[i,j]) & \mbox{ if } (i,j)\in skel(A)\cap skel(B),\\
(A[i,j],0_{G_2}) &\mbox{ if } (i,j)\in skel(A)\setminus skel(B),\\
(0_{G_1},B[i,j]) &\mbox{ if } (i,j)\in skel(B)\setminus skel(A).
\end{array}\right.$$

In the following we will denote by
$\overline{R}_i(A)$ and $\overline{C}_j(A)$ the $i$-th row and the $j$-th column of $A$, respectively.

\begin{lem}\label{GlobSimpleness}
Let $A$ and $B$ be $m\times n$ globally simple p.f. arrays over
abelian groups  $G_1$ and $G_2$, respectively, such that:
\begin{itemize}
\item[(1)] for any $i\in [1,m]$ for which the $i$-th rows of $A$ and $B$ are both
nonempty, we have $skel(\overline{R}_i(A)) \cap skel(\overline{R}_i(B)) \not=\emptyset$;
\item[(2)] for any $j\in [1,n]$ for which the $j$-th columns of $A$ and $B$ are both
nonempty, we have $skel(\overline{C}_j(A)) \cap skel(\overline{C}_j(B)) \not=\emptyset$;
\item[(3)] the elements in every nonempty row/column of both $A$ and $B$ sum to zero.
\end{itemize}
Then $A\oplus B$ is a globally simple p.f. array, whose nonempty rows and columns sum to zero.
\end{lem}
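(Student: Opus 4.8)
The plan is to verify the two assertions of the lemma separately, dispatching the zero-sum statement first (it is routine) and then concentrating on global simplicity, where the real work lies. For the zero-sum part, fix a nonempty row $i$ of $E=A\oplus B$. Its entries have first coordinates that are either the entries of $\overline{R}_i(A)$ (when that row is nonempty) or all equal to $0_{G_1}$ (when it is empty); by hypothesis (3) these sum to $0_{G_1}$ in both situations, and the same holds in the second coordinate. Hence the row of $E$ sums to $(0_{G_1},0_{G_2})$, and columns are handled identically, disposing of the final claim.

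For global simplicity I would argue row by row, with columns treated symmetrically using (2) in place of (1). If one of $\overline{R}_i(A)$, $\overline{R}_i(B)$ is empty, then $E$'s row $i$ is coordinatewise a copy of the nonempty one with a constant zero adjoined, so its partial sums are distinct precisely because the corresponding array is globally simple; thus I may assume both rows nonempty and, by (1), fix a column $j^\ast \in skel(\overline{R}_i(A)) \cap skel(\overline{R}_i(B))$. List the filled columns of $E$'s row $i$ as $j_1<\cdots<j_p$ and write the $\ell$-th partial sum as $(s_\ell^A,s_\ell^B)$, where $s_\ell^A$ (resp. $s_\ell^B$) accumulates the entries of $\overline{R}_i(A)$ (resp. $\overline{R}_i(B)$) through column $j_\ell$. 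Suppose two of these coincide for some $1\le \ell<\ell'\le p$. Setting $S=\{j_{\ell+1},\dots,j_{\ell'}\}$ and letting $S_A$, $S_B$ be the $A$- and $B$-columns in $S$, equality of partial sums yields $\sum_{j\in S_A}A[i,j]=0_{G_1}$ and $\sum_{j\in S_B}B[i,j]=0_{G_2}$. Since $S$ is an interval of $E$-columns, $S_A$ is a set of \emph{consecutive} cells of $\overline{R}_i(A)$ in its natural order, and likewise $S_B$ for $B$; as a globally simple ordering has no proper nonempty consecutive run summing to zero, each of $S_A$, $S_B$ must be either empty or the entire row of its array.

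The main obstacle is to rule out the four resulting combinations, and this is exactly where hypothesis (1) is used. If $S_A=S_B=\emptyset$ then $S=\emptyset$, impossible since $\ell<\ell'$. If $S_A$ is empty while $S_B$ is the whole $B$-row, then $j^\ast\in S_B\subseteq S$; but $j^\ast$ is also an $A$-column, so $j^\ast\in S_A=\emptyset$, a contradiction (the symmetric case is identical). Finally, if $S_A$ and $S_B$ are both full rows, then $S$ contains every filled column of $E$'s row $i$, whereas $\ell\ge 1$ leaves the filled column $j_\ell$ outside $S$, again a contradiction. Hence no two partial sums of $E$'s row $i$ agree, its natural ordering is simple, and rerunning the argument for columns (with $j^\ast$ replaced by the common cell supplied by (2)) shows $A\oplus B$ is globally simple, completing the proof.
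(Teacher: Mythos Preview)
Your proof is correct and follows essentially the same line as the paper's: project a hypothetical zero-sum consecutive block onto the two coordinates, use global simplicity of $A$ and $B$ to force each projection to be empty or the full row, and invoke the common cell from hypothesis~(1) to eliminate the mixed cases. Your write-up is in fact a bit more careful than the paper's---you treat the case where one of $\overline{R}_i(A)$, $\overline{R}_i(B)$ is empty separately and spell out all four combinations in the endgame---but the underlying argument is the same.
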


\begin{proof}
Since the elements in every nonempty row and column of  both $A$ and $B$  sum to zero, the same holds for $A\oplus B$.

Let us suppose, by contradiction, that there exists a row (resp. a column) $\overline{R}_i$ of $A\oplus B$ that is not simple with respect to the  natural ordering.
Then there would be a subsequence $L$ of consecutive elements of $\overline{R}_i$ that sum to zero.
Denoted by $L_1$ the subsequence of the first coordinates of $L$ (ignoring the zeros) and by $L_2$ the one of the second coordinates,
we have that both $L_1$ and $L_2$ sums to zero.
Since both $\overline{R}_i(A)$ and $\overline{R}_i(B)$ are simple with respect to the natural ordering, it follows
that either $L_1=\emptyset$ (we are ignoring zeros) or  $L_1=\E(A)$. Similarly, for $\overline{R}_i(B)$.
If $L_1=\emptyset$, then $L_2=\E(\overline{R}_i(B))$ and hence $L$ is $\E(\overline{R}_i)$.
Similarly, if $L_2=\emptyset$.
Finally, if $L_1$ and $L_2$ are both nonempty,
the only possibility is that $L=\E(\overline{R}_i)$
since  $skel(\overline{R}_i(A)) \cap skel(\overline{R}_i(B)) \not=\emptyset$.
\end{proof}

\begin{prop}\label{ArchdeaconAlmost}
Let $A$ be an Archdeacon array over an abelian group $G_1$ and 
let $B$ be a p.f. array of the same size defined over an abelian group $G_2$.
Suppose that the hypotheses of {\rm Lemma \ref{GlobSimpleness}} are satisfied, 
that $\E(A\oplus B)$ is a set and that
if $(0_{G_1},x) \in \E(A\oplus B)$, then $(0_{G_1},-x)\not \in \E(A\oplus B)$.
Then $A \oplus B$ is a globally simple Archdeacon array over $G_1\oplus G_2$.
\end{prop}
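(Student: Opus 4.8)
The plan is to verify directly that $A\oplus B$ satisfies the three defining conditions of an Archdeacon array (Definition \ref{def:ArchdeaconH}) together with global simplicity, and to observe that most of this is already supplied by earlier results. Since the hypotheses of Lemma \ref{GlobSimpleness} are assumed, that lemma immediately yields that $A\oplus B$ is globally simple and that its nonempty rows and columns sum to $0_{G_1\oplus G_2}$; this settles global simplicity and condition (c). Condition (a), that $\E(A\oplus B)$ is a set, is an explicit hypothesis. Hence the entire proof reduces to checking condition (b): that for every $g\in\E(A\oplus B)$ one has $-g\notin\E(A\oplus B)$.

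To handle (b) I would first record the elementary but crucial observation that $0_{G_1}\notin\E(A)$: since $A$ is an Archdeacon array, $0_{G_1}\in\E(A)$ would force $-0_{G_1}=0_{G_1}\in\E(A)$, contradicting condition (b) for $A$. Inspecting the three cell-types in the definition of $A\oplus B$, it then follows that the first coordinate of any entry of $A\oplus B$ is either $0_{G_1}$ or a member of $\E(A)$, and that it equals $0_{G_1}$ precisely when the entry comes from a cell of $skel(B)\setminus skel(A)$.

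With this in hand the verification of (b) splits into two cases according to the $G_1$-coordinate $a$ of a fixed element $g=(a,b)\in\E(A\oplus B)$. If $a\neq 0_{G_1}$, then $a\in\E(A)$, so the Archdeacon property of $A$ gives $-a\notin\E(A)$; since also $-a\neq 0_{G_1}$ and the first coordinate of every entry of $A\oplus B$ is either $0_{G_1}$ or a member of $\E(A)$, the pair $-g=(-a,-b)$ cannot be an entry of $A\oplus B$, whence $-g\notin\E(A\oplus B)$. If instead $a=0_{G_1}$, then $g=(0_{G_1},b)$ and $-g=(0_{G_1},-b)$, and the extra hypothesis of the statement---that $(0_{G_1},x)\in\E(A\oplus B)$ forces $(0_{G_1},-x)\notin\E(A\oplus B)$---applied with $x=b$ gives exactly $-g\notin\E(A\oplus B)$. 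This exhausts both cases and establishes (b).

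I do not expect a genuine obstacle here: the proof is essentially bookkeeping of the three cell-types of $A\oplus B$. The only point requiring a little care is realizing that an entry's first coordinate is forced into $\E(A)$ as soon as it is nonzero, which is what lets the Archdeacon property of $A$ dispose of the nonzero branch while the additional $(0_{G_1},x)$ hypothesis is tailored exactly to the zero branch. Once the case split is set up, each branch closes in one line.
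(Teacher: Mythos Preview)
Your proof is correct and follows essentially the same route as the paper: invoke Lemma~\ref{GlobSimpleness} for global simplicity and condition~(c), take condition~(a) as given, and then verify condition~(b) by splitting on whether the first coordinate of $g\in\E(A\oplus B)$ is $0_{G_1}$ or lies in $\E(A)$. Your explicit remark that $0_{G_1}\notin\E(A)$, and hence that $-a\neq 0_{G_1}$ in the nonzero branch, makes rigorous a point the paper leaves implicit.
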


\begin{proof}
By Lemma \ref{GlobSimpleness}, $E=A\oplus B$ is a globally simple p.f. array whose rows and columns sum to zero.
We now show that condition (b) of Definition \ref{def:ArchdeaconH} holds.
Suppose that $g=(g_1,g_2)\in G_1\oplus G_2$ belongs to $\E(E)$.
Then, either $g_1\in \E(A)$ or $g_1=0_{G_1}$. In the first case, $-g_1 \not \in \E(A)$ and so $-g=(-g_1,-g_2)\not \in 
\E(E)$.
If $g_1=0_{G_1}$, then $(0_{G_1},-g_2)\not \in \E(E)$ by hypothesis, proving the statement.
\end{proof}

Now we consider the $m\times n$ p.f. array $B_{m,n,d}(i_1,i_2;j_1,j_2)$ over $\mathbb{Z}_d$ which has only four nonempty cells:
those in positions  $(i_1,j_1),(i_2,j_2)$ that we fill with $+1$ and
those  in positions $(i_2,j_1),(i_1,j_2)$ that we fill with $-1$.
The following result is a consequence of Proposition \ref{ArchdeaconAlmost}.

\begin{cor}\label{ultimo}
Let $k<n$ and let us suppose there exists a globally simple  cyclically $k$-diagonal  $\H_t(n;k)$, say $A$, whose
filled diagonals are
$D_1,\dots,D_k$. Then considering the array
$B=B_{n,n,d}(1,2;1,2)$, where $d>2$,
we have that $E=A \oplus B$ is a globally simple
Archdeacon array over the group $\mathbb{Z}_{2nk+t}\oplus \mathbb{Z}_d$.
\end{cor}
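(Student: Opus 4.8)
The plan is to read the statement as a direct verification of the hypotheses of Proposition \ref{ArchdeaconAlmost}, applied with $G_1=\mathbb{Z}_{2nk+t}$ and $G_2=\mathbb{Z}_d$. First, $A$ is an Archdeacon array over $G_1$: being a relative Heffter array it satisfies (a)--(c) of Definition \ref{def:ArchdeaconH}, as already observed right after that definition. Second, $B=B_{n,n,d}(1,2;1,2)$ is by construction a p.f.\ array of the same $n\times n$ size over $G_2$. The crux of the whole argument is to locate the four filled cells of $B$ inside the diagonal structure of $A$. These cells are $(1,1)$ and $(2,2)$, carrying $+1$, together with $(2,1)$ and $(1,2)$, carrying $-1$. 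Since $(1,1),(2,2)\in D_1$, $(2,1)\in D_2$ and $(1,2)\in D_n$, and since $A$ is filled exactly on $D_1,\dots,D_k$ with $k<n$, the first three cells lie on filled diagonals of $A$ while $(1,2)$ lies on the empty diagonal $D_n$. Everything else follows from this single observation.

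From it I would first check the hypotheses of Lemma \ref{GlobSimpleness}. The array $A$ is globally simple by assumption, and $B$ is globally simple because each of its nonempty lines (rows $1,2$ and columns $1,2$) carries exactly the pair $\{+1,-1\}$, whose only non-trivial partial sum is $\pm 1\neq 0$ in $\mathbb{Z}_d$ as $d>2$. The overlap conditions (1)--(2) hold because $(1,1)$ lies in both $skel(\overline{R}_1(A))$ and $skel(\overline{R}_1(B))$, $(2,1)$ in both $skel(\overline{R}_2(A))$ and $skel(\overline{R}_2(B))$, $(1,1)$ in both first columns and $(2,2)$ in both second columns; condition (3) is immediate, since the rows and columns of both $A$ and $B$ sum to zero.

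It then remains to verify the two extra hypotheses of Proposition \ref{ArchdeaconAlmost}. Because $(1,2)$ is the only filled cell of $B$ that is empty in $A$, the unique element of $\E(E)$ with first coordinate $0_{G_1}$ is $(0_{G_1},-1)$; its opposite $(0_{G_1},+1)$ cannot occur, since the only cells carrying the $B$-entry $+1$, namely $(1,1)$ and $(2,2)$, have nonzero first coordinate (they lie on $D_1$ and $0\notin\E(A)$). This yields the required condition on elements with trivial first coordinate. Finally $\E(E)$ is a set: the entries inherited from $skel(A)$ are pairwise distinct because $\E(A)$ is a set, the second coordinate separates them from the three $\pm 1$-entries at $(1,1),(2,2),(2,1)$, and $(0_{G_1},-1)$ differs from all of them as $0\notin\E(A)$ and, using $d>2$, as $+1\neq -1$. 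With the hypotheses of Proposition \ref{ArchdeaconAlmost} thus in place, $E=A\oplus B$ is a globally simple Archdeacon array over $\mathbb{Z}_{2nk+t}\oplus\mathbb{Z}_d$. The only real obstacle is the diagonal bookkeeping of the first paragraph: it is precisely the fact that three of the four cells of $B$ fall on filled diagonals while $(1,2)$ falls on the empty diagonal $D_n$ that simultaneously delivers the skeleton overlaps of Lemma \ref{GlobSimpleness} and the single-new-element structure demanded by the Archdeacon conditions.
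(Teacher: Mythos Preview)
Your proposal is correct and follows precisely the route the paper indicates: the paper simply states that the corollary is a consequence of Proposition \ref{ArchdeaconAlmost}, and you supply the routine verification of its hypotheses (and of those of Lemma \ref{GlobSimpleness}), hinging on the observation that $(1,1),(2,2)\in D_1$, $(2,1)\in D_2$ are filled in $A$ while $(1,2)\in D_n$ is not. The phrasing of your last paragraph is slightly tangled---the entries at $(1,1),(2,2),(2,1)$ are already among those ``inherited from $skel(A)$'' and are distinguished from one another and from the rest by their first coordinates alone---but the argument is sound.
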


We know that there exists a (globally simple) cyclically $3$-diagonal $\H_t(n;3)$ in each of the following cases:
\begin{itemize}
\item[(1)] $t\in \{1,2\}$ and $n\equiv 0,1 \pmod{4}$, see \cite[Theorems 3.4 and 3.9]{ADDY};
\item[(2)] $t=3$ and $n\equiv 0,3\pmod{4}$, see \cite[Propositions 5.1 and 5.3]{CMPPRelative};
\item[(3)] $t=n$ and $n$ is odd, see Proposition \ref{prop:3-t=n};
\item[(4)] $t=2n$ and $n$ is odd, see Proposition \ref{prop:3-t=2n}.
\end{itemize}
Therefore in these cases, we can apply Corollary \ref{ultimo}:
for any $d\geq 3$  there exists a globally simple Archdeacon array $E$ of size $n\geq 4$ defined over
$\mathbb{Z}_{6n+t}\oplus \mathbb{Z}_d$ whose skeleton is $D_1\cup D_2 \cup D_3 \cup \{(1,2)\}$.

Moreover, because of \cite[Proposition 5.9]{CDP}, there exists a solution of $P(E)$ whenever $n$ is also even. In
those cases we have a biembedding of $\Cay[\mathbb{Z}_{6n+t}\oplus \mathbb{Z}_d:\pm \mathcal{E}(E)]$ in an orientable
surface whose faces classes contain triangles and exactly one quadrangle.

As example of such construction, in Figure \ref{fig1} we give a globally simple Archdeacon array over $\Z_{51}\oplus \Z_d$, where $d\geq 3$.

\begin{figure}[ht]
\begin{footnotesize}
$$\begin{array}{|c|c|c|c|c|c|c|c|}\hline
 (-9,1) &  (0,-1)  &     &     &     &     &    (16,0) &   (-7,0) \\\hline
(-3,-1) &  (-22,1) &     &     &     &     &     &    (25,0) \\\hline
     (12,0) &    (1,0)&   (-13,0) &    &     &     &     &      \\\hline
      &    (21,0) &    (2,0) &  (-23,0)&    &     &     &      \\\hline
      &     &    (11,0) &    (8,0) &  (-19,0) &    &     &      \\\hline
      &     &     &    (15,0) &    (5,0) &  (-20,0) &    &      \\\hline
      &     &     &     &    (14,0) &   (-4,0) &  (-10,0) &     \\\hline
      &     &     &     &     &    (24,0) &   (-6,0) &  (-18,0)  \\\hline
\end{array}$$
\end{footnotesize}
\caption{An  Archdeacon array over $\Z_{51}\oplus \Z_d$.}\label{fig1}
\end{figure}

We recall that the existence of a (globally simple) cyclically $4$-diagonal $\H_t(n;4)$
for any $n$ and $t\in \{1,2,4\}$ has been proved in \cite[Theorem 2.2]{DW} and  \cite[Proposition 4.9]{CMPPRelative}.
Therefore, for any $d\geq 3$, because of Corollary \ref{ultimo} there exists a globally simple Archdeacon array $E$ of
size $n\geq 4$ over  $\mathbb{Z}_{8n+t}\oplus \mathbb{Z}_d$ whose skeleton is $D_1\cup D_2 \cup D_3 \cup D_4
\cup\{(1,2)\}$.

Moreover, because of \cite[Proposition 5.13]{CDP}, there exists a solution of $P(E)$ whenever $n\not\equiv 0
\pmod{3}$. In these cases we have a biembedding of $\Cay[\mathbb{Z}_{8n+t}\oplus \mathbb{Z}_d:\pm \mathcal{E}(E)]$ in
an orientable surface whose faces classes contain quadrangles and exactly one pentagon.

An  example of such construction is given in Figure \ref{fig1_2} where we provide a globally simple Archdeacon array over $\Z_{60}\oplus \Z_d$, where $d\geq 3$.

\begin{figure}[ht]
\begin{footnotesize}
$$\begin{array}{|c|c|c|c|c|c|c|}
\hline  (25,1) &(0,-1)  && & (1,0) & (-8,0) & (-18,0)      \\
\hline   (-19,-1) & (26,1) & &&  & (2,0) & (-9,0)   \\
\hline   (-10,0) & (-20,0)  & (27,0) & & & & (3,0)  \\
\hline    (4 ,0)& (-11,0) & (-21,0) & (28,0) &   &  &\\
\hline     &   (5,0) & (-12,0) & (-22,0)&(29,0) &  &  \\
\hline    &  & (6,0) & (-13,0)&(-16,0)  & (23,0) &  \\
\hline   &  &  & (7,0) &(-14,0)  & (-17,0) & (24,0) \\
\hline
\end{array}$$
\end{footnotesize}
\caption{An Archdeacon array over $\Z_{60}\oplus \Z_d$.}\label{fig1_2}
\end{figure}

\end{document}